\numberwithin{equation}{section}
\newtheorem{thm}{Theorem}[section]
\newtheorem{cor}[thm]{Corollary}
\newtheorem{lem}[thm]{Lemma}
\newtheorem{prop}[thm]{Proposition}
\newtheorem{defn}[thm]{Definition}
\newtheorem{rem}[thm]{Remark}
\begin{document}
\allowdisplaybreaks{
\title[]{THE KELLY CRITERION AND UTILITY FUNCTION OPTIMISATION FOR STOCHASTIC BINARY GAMES:~SUBMARTINGALE AND SUPERMARTINGALE REGIMES}
\author{Steven D Miller}\email{stevendm@ed-alumni.net}
\address{}
\maketitle
\begin{abstract}
A reformulation of the Kelly criterion is presented. Let $\mathfrak{G}$ be a generic stochastic Markovian-Bernoulli binary game with outcomes $\mathscr{Z}(I)\in\lbrace -1,1\rbrace$ of N trials for $I=1...N$. The binomial probabilities are $\bm{\mathsf{P}}(\mathscr{Z}(I)=1)=p$ and $\bm{\mathsf{P}}(\mathscr{Z}(I)=-1)=q$
with $p+q=1$. For a fair game $p=q=\tfrac{1}{2}$ and for a biased game $p>q$. If $\mathscr{W}(0)$ is the initial wealth then at the $I^{th}$ trial one bets a fraction $\mathcal{F}$ so that $B(I)=\mathcal{F}\mathscr{W}(I-1)$. If one wagers $B(I)$ and wins one recovers the original wager plus $B(I)$ if $\mathscr{Z}(I)=+1$, or a loss of $B(I)$ if $\mathscr{Z}(I)=-1$. The wealth at the $N^{th}$ trial/bet for large $N$ is the random walk $\mathscr{W}(N)=\mathscr{W}(0)+\sum_{I=1}^{N}B(I)\mathscr{Z}(I)=\mathscr{W}(0)\prod_{I=1}^{N}(1+\mathcal{F}\mathscr{Z}(I))$ with expectation $\bm{\mathsf{E}}[\mathscr{W}(N)]$. Defining a 'utility function' $\bm{\mathsf{U}}(\mathcal{F},p)=\bm{\mathsf{E}}[\log(\mathscr{W}(N)/\mathscr{W}(0))^{1/N}]$ then $\bm{\mathsf{U}}(\mathcal{F},p)$ is optimised by the Kelly fraction $\mathcal{F}=\mathcal{F}_{K}=p-q=2p-1$, which is essentially a critical point of $\bm{\mathsf{U}}(\mathcal{F},p)$. Also $\bm{\mathsf{U}}(\mathcal{F}_{K},p)$ can be related to the Shannon entropy. If $[0,1]=[0,\mathcal{F}_{*})\bigcup [\mathcal{F}_{*}]\bigcup (\mathcal{F}_{*},1]$ with $\bm{\mathsf{U}}(\mathcal{F}_{*},p))=0$ then $\bm{\mathsf{U}}(\mathcal{F},p))>0, \forall\mathcal{F}\in[0,\mathcal{F}_{*})$ and $\mathscr{W}(N)$ is a submartingale for $p>1/2$; also $\bm{\mathsf{U}}(\mathcal{F},p))<0,\forall \mathcal{F}\in(\mathcal{F}_{*},1]$, and $\mathscr{W}(\mathcal{F},p)$ is a supermartingale. Estimates are derived for variance and volatility $\bm{\mathsf{VAR}}(\mathscr{W}(N))$ and $\bm{\sigma}(\mathscr{W}(N))=\sqrt{\bm{\mathsf{VAR}}(\mathscr{W}(N)})$. For large $N$ and $\mathcal{F}=\mathcal{F}_{K}$, $\bm{\mathsf{E}}[\mathscr{W}(N)]$ grows exponentially.
\end{abstract}
\tableofcontents
\raggedbottom
\maketitle
\clearpage
\section{Introduction}
This autodidactic paper reviews and reformulates the Kelly criterion \textbf{[1,2]} for optimising a utility function within the context of a generic stochastic binary Markovian 'game' $\mathfrak{G}$ of $N$ Bernoulli trials, where one wagers on the outcome of only two possible outcomes which have the probability p and q with $p+q=1$ and $p>q$, and thus an edge $\mathcal{E}=p-q$. All past outcomes have no affect on any future outcomes, hence the game has no 'memory' of the past and is Markovian. The underlying probability distribution is binomial, which by the Central Limit Theorem becomes Gaussian for large samples. The simplest example of this would be a biased coin flipping game\textbf{[3-6]} However, new results are provided in formally identifying the regimes, and  giving proofs, of where the wealth or liquidity grows in the 'Kelly regime' as a submartingale, or is depleted as a supermartingale. In the submartingale regime, classic Doob theorems can also be applied. Estimates are also derived for variance and volatility of the liquidity/wealth accumulation.

A fundamental issue within finance, investing and gambling is to try and find positive expectation opportunities which provide a statistical edge, usually a very small edge, or when the odds (perhaps computed/estimated independently) are in one's favour. However, even when such opportunities can be found a second challenge or problem arises--one must then know how  much of the initial liquidity or capital can be wagered in such a way as to optimise the known player/bettor advantage but also minimise the risk. For example, consider a simple stochastic binary game of a biased coin flip with $p>q$ with p the probability of heads and $q=1-p$ that of tails. If one bets a certain amount and wins then the bet is returned plus a win equal to the bet size. If one loses then one simply loses the bet. If one bets the entire initial liquidity or wealth $\mathscr{W}(0)$ on heads and wins then the wealth after the first bet is $\mathscr{W}(1)=2\mathscr{W}(0)$ so the money is doubled. If one bets all of this again and wins again, then $\mathscr{W}(2)=2\mathscr{W}(1)=4\mathscr{W}(0)$ and so on. The wealth will then grow exponentially (double) as long as one wins. However, as soon as one losses, the entire accumulated wealth/liquidity will be lost. The probability of this is $1-p^{N}$. Since $p>1/2$ but $p<1$ then this tends to 1 as N increases since $p^{N}$ tends to zero. Formally, $\lim_{N\rightarrow\infty}(1-p^{N})=1$. Hence, the ruin probability is absolute, and so ruin is guaranteed if one keeps playing in this way. The problem then is deduce what optimum \emph{fraction} $\mathcal{F}$ of the current wealth to bet on the next trail so that ruin is avoided in the long run; and wealth slowly accumulates, driven  by the law of large numbers. If $\mathscr{W}(N-1)$ is the wealth after the $(N-1)^{th}$ trial/wager then one will bet an amount $B(N)=\mathcal{F}\mathscr{W}(N-1)$ at the $N^{th}$ bet. The key question is: \emph{how then does one find or compute the optimum value of $\mathcal{F}$ for a given advantage $p>q$ with $p+q=1$?}

This problem was properly and rigorously solved by Kelly \textbf{[1,2]}, within the context of information theory, and has been studied quite extensively since \textbf{[7-16]}. However, the problem has been of interest since at least the 18th century in the form of the St Petersburg problem \textbf{[6,17]} and its analysis by Bernoulli. The Kelly criterion essentially aims to find the value of $\mathcal{F}$ for a given advantage, which maximises a "utility function", which is the expectation of the logarithm of wealth. Economists and financial engineers will use jargon-laden terms like "geometric mean maximising portfolio strategy" etc., but they are all essentially referring to a utility function. If $\mathscr{W}(N)$ is the liquidity or wealth accumulated at the $N^{th}$ bet or wager, in a scenario, where there is a advantage or statistical edge $p>q$ , then the utility function is essentially of the form $\bm{\mathsf{E}}[\log\mathscr{W}(N)]$, where $\bm{\mathsf{E}}$ is the expectation.

The concept of utility function as the log of wealth was first introduced by Bernoulli\textbf{[6,17]}. The aim to compute a bet size at each trial, for $I=1...N$ trials, which optimise or maximise this function, and which is a fixed fraction $\mathcal{F}$ of the current wealth; in other words, $\mathcal{F}$ is actually \emph{the critical point for the utility function}. As $\mathscr{W}(N)$ grows one can then risk a larger wager, while at the same time knowing that there is zero (or very little) chance of ruin whereby $\mathscr{W}(N)\rightarrow 0$ for large $N$. Thorp \textbf{[18]} successfully applied the criterion in  selecting bet sizes while card counting in blackjack (which gave him an edge or advantage) in Las Vegas in the 1960s. He later successfully applied what he had learned to securities markets and running a hedge fund \textbf{[8]}. A good reference is \textbf{[10]}. This book is a collection of papers that explore the Kelly criterion in some depth, including its theoretical foundations and practical applications in finance, covering a wide range of topics, from portfolio management to risk control.

In this paper, we consider only the mathematical foundations and derive the Kelly criterion by considering a growth rate or utility function of the form $\bm{\mathsf{U}}
=\bm{\mathsf{E}}[\log(|\mathscr{W}(N)/\mathscr{W}(0)|^{1/N}]$ for a binary Markovian game of $N$ Bernoulli trials. We also derive estimates for variances$\bm{\mathsf{VAR}}(\mathscr{W}(N))$ and volatility $\bm{\sigma}(\mathscr{W}(N))=\sqrt{\bm{\mathsf{VAR}}(\mathscr{W}(N)})$. It will also be shown that for large N, that $\bm{\mathsf{E}}[\mathscr{W}(N)]$ grows exponentially and is a submartingale when $\bm{\mathsf{U}}(\mathcal{F},p)>0.$

\section{Stochastic Binary Games of Bernoulli trials}
The underlying probability distribution of the stochastic game is binomial and the simplest example of such stochastic game would be a coin flip \textbf{[2,3-6]}
\begin{defn}
Let $\mathfrak{G}$ be a stochastic Markovian-Bernoulli binary game (MBBG) of two outcomes $\mathscr{Z}=1$ or $\mathscr{Z}=-1$ or $\mathscr{Z}\in\lbrace -1,1\rbrace$.
A game consists of N independent trials or bets where N is large and one bets on either of the possible outcomes. Each trial or bet
is i.i.d and Markovian so that past outcomes have no effect on the present or future outcomes. The probabilities are
\begin{align}
&\bm{\mathsf{P}}(\mathscr{Z}=+1)={\mathit{p}}\\&
\bm{\mathsf{P}}(\mathscr{Z}=-1)={\mathit{q}}=1-{\mathit{p}}
\end{align}
and $\mathit{p}+\mathit{q}=1$. The game is fair if $\mathit{p}=\mathit{q}=\tfrac{1}{2}$ and biased or advantageous to the player if $\mathit{p}>\mathit{q}$. If $\mathit{p}>\mathit{q}$ and the player bets on the outcome $\mathscr{Z}=+1$ then the game  is biased in the players favour. The underlying probability distribution or "probability mass function" is binomial. Suppose in N trials there are $U$ wins with $\mathscr{Z}=1$ for each win, and $V$ losses with $\mathscr{Z}=-1$ for each loss. Then $N=U+V$. After N trials, the probability that $U$ has a value $U=\alpha$ is given by
\begin{align}
\bm{\mathsf{P}}(U=\alpha;N)=\binom{N}{\alpha}\mathit{p}^{\alpha}q^{N-\alpha}=\binom{N}{\alpha}\mathit{p}^{\alpha}(1-\mathit{p})^{N-\alpha}
\end{align}
Similarly, for $V=\beta$ losses out of N trials
\begin{align}
\bm{\mathsf{P}}(V=\beta;N)=\binom{N}{\beta}\mathit{q}^{\beta}(1-\mathit{q})^{N-\beta}\equiv \binom{N}{\beta}(1-\mathit{p})^{\beta}\mathit{p}^{N-\beta}
\end{align}
\end{defn}
If $N=U+V$ and $U=\alpha$ then one must have $V=\beta$ so that $\alpha+\beta=N$.
One can write
\begin{align}
&U=\alpha=\sim Binom(N,\mathit{p})\nonumber\\&
 V=\beta=\sim Binom (N,\mathit{q})
\end{align}
If $\alpha=1$ and $N=1$, that is one trial, then the probability of one win is
\begin{align}
\bm{\mathsf{P}}(U=1;1)\equiv \bm{\mathsf{P}}(\mathscr{Z}=+1)=\binom{1}{1}\mathit{p}(1-\mathit{p})^{0}=\frac{1!}{1! 0!}\mathit{p}(1-\mathit{p})^{0}=\mathit{p}
\end{align}
Similarly, if $\beta=1,N=1$, then the probability of a loss is
\begin{align}
\bm{\mathsf{P}}(V=1;1)\equiv \bm{\mathsf{P}}(\mathscr{Z}=-1)=\binom{1}{1}\mathit{q}(1-\mathit{q})^{0}=\frac{1!}{1! 0!}\mathit{q}(1-\mathit{q})^{0}=\mathit{q}
\end{align}
Note also that by the binomial series
\begin{align}
&\sum_{\alpha=0}^{N}\bm{\mathsf{P}}(U=\alpha;N)=\sum_{\alpha=0}^{N}\binom{N}{\alpha}p^{\alpha}q^{N-\alpha}=(p+q)^{N}=1
\\&
\sum_{\beta=0}^{N}\bm{\mathsf{P}}(V=\beta;N)=1
\end{align}
since $p=q=1$.

The binary game is also Markovian in that past outcomes have no affect on the present outcome or any future outcomes. This means that the transition probabilities are
\begin{align}
&\bm{\mathsf{P}}(U=\alpha;N|U=\alpha-1;N-1)=\mathit{p}~if~\alpha=(\alpha-1)+1\nonumber\\&
\bm{\mathsf{P}}(U=\alpha;N|U=\alpha-1;N-1)=\mathit{q}=1-\mathit{p}~if~\alpha=\alpha-1\nonumber\\&
\bm{\mathsf{P}}(V=\beta;N|V=\beta-1;N-1)=\mathit{q}~if~\beta=(\beta-1)+1\nonumber\\&
\bm{\mathsf{P}}(V=\beta;N|V=\beta-1;N-1)=\mathit{q}=1-\mathit{p}~if~\beta=\beta-1
\end{align}
\begin{defn}
If $\mathit{p}>\mathit{q}$ (e.g, a biased coin) then the game has a positive edge $\mathcal{E}$ given by
\begin{align}
\mathcal{E}=\mathit{p}-\mathit{q}>0
\end{align}
For example, if $\mathit{p}=0.52$ and $\mathit{q}=0.48$ then $\mathcal{E}=0.04$. But for a fair game $\mathit{p}=\mathit{q}=\frac{1}{2}$ and $\mathsf{E}=0$.
\end{defn}
If there are N trials then they can be labelled as $I=1...N$ so that $\bm{\mathsf{P}}(\mathscr{Z}(I)=+1)=P$ and $\bm{\mathsf{P}}(\mathscr{Z}(I)=-1)=Q$ for all $I\in[1,N]$. The initial liquidity is $\mathscr{W}(0)$ and at each trial I, a fraction $\mathcal{F}$ of the current liquidity is wagered so that the 'bet liquidity' is
\begin{align}
B(I)=\mathcal{F}\mathscr{W}(I-1)
\end{align}
Note that after the $(I-1)^{th}$ trial, the current liquidity $\mathscr{W}(I-1)$ is no longer a stochastic quantity and is now known with certainty. Suppose $p<q$ and player always wagers on the favourable bet with probability p. If $B(I)$ is wagered then if $\mathscr{Z}(I)=1$ the player wins $B(I)$ with probability p, getting back the bet $B(I)$ plus another $B(I)$ or $2B (I)$. If $\mathscr{Z}(I)=-1$ then the player loses an amount $B(I)$ with probability q. Then the liquidity $\mathscr{W}(I-1)$ is increased or decreased such  that
\begin{align}
&\mathscr{W}(I)=\mathscr{W}(I-1)+B(I)\\&
\mathscr{W}(I)=\mathscr{W}(I-1)-B(I)
\end{align}
This can then be represented as a 1-dimensional random walk so that for $I=1$ to N trials or wagers, the liquidity $\mathscr{W}(N)$ is then
\begin{align}
\mathscr{W}(N)=\mathscr{W}(0)+\sum_{I=1}^{N}\mathscr{Z}(I)B(I)
\end{align}
with expectation
\begin{align}
\bm{\mathsf{E}}[\mathscr{W}(N)]=\mathscr{W}(0)+\bm{\mathsf{E}}\left[\sum_{I=1}^{N}\mathscr{Z}(I)B(I)\right]
\end{align}
\begin{lem}
The expected values, variances and volatilities of U and V are the standard results
\begin{align}
&\bm{\mathsf{E}}[U]=N\mathit{p}\\&
\bm{\mathsf{E}}[V]=N\mathit{q}=N(1-\mathit{p})\\&
\bm{\mathsf{VAR}}[U]=N\mathrm{pq}=N\mathit{p}(1-\mathit{p})\\&
\bm{\mathsf{VAR}}=N\mathrm{pq}=N(\mathit{q}(1-\mathit{q})\\&
\bm{\sigma}[U]=\sqrt{N\mathrm{pq}}=\sqrt{N\mathit{p}(1-\mathit{p})}\\&
\bm{\sigma}[V]=\sqrt{N\mathrm{pq}}=\sqrt{N(\mathit{q}(1-\mathit{q})}
\end{align}
\end{lem}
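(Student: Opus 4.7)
The plan is to use the decomposition of $U$ and $V$ as sums of i.i.d.\ Bernoulli indicators, which reduces the whole lemma to elementary calculations. First I will introduce indicator variables $X_{I}=\bm{1}\{\mathscr{Z}(I)=+1\}$ for $I=1,\dots,N$, so that $X_{I}\sim\mathrm{Bern}(\mathit{p})$ and $U=\sum_{I=1}^{N}X_{I}$; likewise $Y_{I}=\bm{1}\{\mathscr{Z}(I)=-1\}=1-X_{I}$ and $V=\sum_{I=1}^{N}Y_{I}$. The Markovian/i.i.d.\ assumption already stated for $\mathfrak{G}$ guarantees that the $\{X_{I}\}$ (and the $\{Y_{I}\}$) are mutually independent.

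Next I will compute the first moments. Since $\bm{\mathsf{E}}[X_{I}]=\mathit{p}\cdot 1+\mathit{q}\cdot 0=\mathit{p}$, linearity of expectation gives $\bm{\mathsf{E}}[U]=N\mathit{p}$, and symmetrically $\bm{\mathsf{E}}[V]=N\mathit{q}=N(1-\mathit{p})$. For the second moments, since $X_{I}$ is $\{0,1\}$-valued we have $\bm{\mathsf{E}}[X_{I}^{2}]=\mathit{p}$, hence
\begin{equation*}
\bm{\mathsf{VAR}}[X_{I}]=\bm{\mathsf{E}}[X_{I}^{2}]-(\bm{\mathsf{E}}[X_{I}])^{2}=\mathit{p}-\mathit{p}^{2}=\mathit{p}(1-\mathit{p})=\mathit{p}\mathit{q}.
\end{equation*}
Independence of the $X_{I}$ then yields $\bm{\mathsf{VAR}}[U]=\sum_{I=1}^{N}\bm{\mathsf{VAR}}[X_{I}]=N\mathit{p}\mathit{q}$, and the same argument applied to the $Y_{I}$ gives $\bm{\mathsf{VAR}}[V]=N\mathit{q}(1-\mathit{q})=N\mathit{p}\mathit{q}$. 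The volatility statements $\bm{\sigma}[U]=\sqrt{N\mathit{p}\mathit{q}}$ and $\bm{\sigma}[V]=\sqrt{N\mathit{p}\mathit{q}}$ are then immediate from the definition $\bm{\sigma}=\sqrt{\bm{\mathsf{VAR}}}$.

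As an optional cross-check I will verify the expectation directly from the binomial PMF (3), namely
\begin{equation*}
\bm{\mathsf{E}}[U]=\sum_{\alpha=0}^{N}\alpha\binom{N}{\alpha}\mathit{p}^{\alpha}\mathit{q}^{N-\alpha}=N\mathit{p}\sum_{\alpha=1}^{N}\binom{N-1}{\alpha-1}\mathit{p}^{\alpha-1}\mathit{q}^{N-\alpha}=N\mathit{p}(\mathit{p}+\mathit{q})^{N-1}=N\mathit{p},
\end{equation*}
using $\alpha\binom{N}{\alpha}=N\binom{N-1}{\alpha-1}$ and $\mathit{p}+\mathit{q}=1$, with the analogous computation of $\bm{\mathsf{E}}[U(U-1)]=N(N-1)\mathit{p}^{2}$ recovering $\bm{\mathsf{VAR}}[U]=N\mathit{p}\mathit{q}$. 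There is no genuine obstacle here: the lemma is classical, and the only thing to be careful about is the bookkeeping of independence when passing from the variance of a single Bernoulli to the variance of the binomial sum, which is exactly what the indicator decomposition makes transparent.
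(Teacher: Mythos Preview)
Your proof is correct. The paper only proves the expectation formulas (2.17)--(2.18) and does so via the direct PMF route you give as your ``optional cross-check'': it writes $\bm{\mathsf{E}}[U]=\sum_{\alpha}\alpha\binom{N}{\alpha}\mathit{p}^{\alpha}\mathit{q}^{N-\alpha}$, uses $\alpha\binom{N}{\alpha}=N\binom{N-1}{\alpha-1}$, shifts the index, and recognises the binomial expansion of $(\mathit{p}+\mathit{q})^{N-1}=1$. Your primary argument via the indicator decomposition $U=\sum_{I}X_{I}$ with $X_{I}\sim\mathrm{Bern}(\mathit{p})$ is a genuinely different and cleaner route: it avoids any combinatorial identity, gets the variance formulas (2.19)--(2.20) for free from independence (which the paper states but does not prove), and makes the passage to $\bm{\sigma}$ immediate. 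The PMF computation buys a direct link to the distribution (2.3) already introduced in the paper, while your indicator approach buys transparency and completeness, since it actually establishes all six displayed lines rather than just the first two.
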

\begin{proof}
We will prove (2.17) and (2.18) only as these are standard results for a binomial distribution. To prove that $p=\bm{\mathsf{E}}[\tfrac{U}{N}]$
\begin{align}
&\bm{\mathsf{E}}[U]=\sum_{\alpha=0}^{N}\alpha\bm{\mathsf{P}}(U=\alpha)=\sum_{\alpha=0}^{N}\alpha \binom{N}{\alpha}\mathit{p}^{\alpha}\mathit{q}^{N-\alpha}\nonumber\\&
=\sum_{\alpha=0}^{N}N\binom{N-1}{\alpha-1}\mathit{p}^{\alpha}\mathit{q}^{N-\alpha}=N\sum_{\alpha=0}^{N}\binom{N-1}{\alpha-1}\mathit{p}^{\alpha}\mathit{q}^{N-\alpha}
\end{align}
since
\begin{align}
\alpha\binom{N}{\alpha}=\binom{N-1}{\alpha-1}
\end{align}
Let $\zeta=\alpha-1$ so that $\alpha=\zeta+1$ and $0\le \zeta\le N-1$ giving
\begin{align}
\bm{\mathsf{E}}[U]=N\sum_{j=0}^{N-1}\binom{N-1}{\zeta}\mathit{p}^{\zeta+1}\mathit{q}^{(N-1)-j}=N\mathit{p}\sum_{j=0}^{N-1}\binom{N-1}{\zeta}\mathit{p}^{\zeta}\mathit{q}^{(N-1)-j}
\end{align}
Then
\begin{align}
\bm{\mathsf{E}}\left[\frac{U}{N}\right]=\frac{1}{N}\bm{\mathsf{E}}[U]=\frac{N\mathit{p}
\sum_{j=0}^{N-1}\binom{N-1}{\zeta}\mathit{p}^{\zeta}\mathit{q}^{(N-1)-j}}{N}=\mathit{p}\sum_{j=0}^{N-1}\binom{N-1}{\zeta}\mathit{p}^{\zeta}\mathit{q}^{(N-1)-j}
\end{align}
One now recognises the binomial expansion
\begin{align}
\sum_{j=0}^{N-1}\binom{N-1}{\zeta}\mathit{p}^{\zeta}\mathit{q}^{(N-1)-j}=(\mathit{p}+\mathit{q})^{N-1}=1
\end{align}
since $p+q=1$. Hence $p=\bm{\mathsf{E}}[U/N]$. It follows that $q=\bm{\mathsf{E}}[V/N]$ by a similar argument.
\end{proof}
\begin{lem}
Since $U$ and $V$ are independent, the covariance is zero.
\begin{align}
\bm{\mathsf{COV}}(U,V)=0
\end{align}
\end{lem}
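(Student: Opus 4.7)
The plan is to apply the definition of covariance,
\begin{equation}
\bm{\mathsf{COV}}(U,V) = \bm{\mathsf{E}}[UV] - \bm{\mathsf{E}}[U]\,\bm{\mathsf{E}}[V],
\end{equation}
and invoke the stated independence of $U$ and $V$ to factor $\bm{\mathsf{E}}[UV] = \bm{\mathsf{E}}[U]\,\bm{\mathsf{E}}[V]$, which immediately kills the difference. The previous lemma has already identified $\bm{\mathsf{E}}[U] = Np$ and $\bm{\mathsf{E}}[V] = Nq$, so a brute-force evaluation of $\bm{\mathsf{E}}[UV]$ through the joint probability mass function is unnecessary beyond confirming that the factorisation returns $N^{2}pq$ on both sides.

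If I wanted to make the independence claim itself the subject of a short justification, I would decompose $U$ and $V$ as sums of Bernoulli indicators, $U = \sum_{I=1}^{N}\mathbf{1}\{\mathscr{Z}(I)=+1\}$ and $V = \sum_{J=1}^{N}\mathbf{1}\{\mathscr{Z}(J)=-1\}$, and argue from the i.i.d.\ structure of the trials that the cross-terms $\bm{\mathsf{E}}\bigl[\mathbf{1}\{\mathscr{Z}(I)=+1\}\,\mathbf{1}\{\mathscr{Z}(J)=-1\}\bigr]$ factor as products of single-trial expectations whenever $I \neq J$. The diagonal contribution $I=J$ would then need to be treated separately, and this is where the real subtlety lies.

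The main obstacle is conceptual rather than computational. If $U$ and $V$ are the counts of $+1$ and $-1$ outcomes \emph{within the same} sequence of $N$ trials, then $U+V=N$ is a deterministic identity, so $V = N - U$ is a function of $U$ and the two random variables cannot be independent; one finds instead
\begin{equation}
\bm{\mathsf{COV}}(U,V) = \bm{\mathsf{COV}}(U,\,N-U) = -\bm{\mathsf{VAR}}(U) = -Npq.
\end{equation}
For the lemma as stated to hold, one must interpret $U$ and $V$ as binomial counts arising from two separately and independently generated runs of the game $\mathfrak{G}$, rather than from a single realisation. Making this modelling choice explicit at the outset is, in my view, the substantive content of the write-up; once the interpretation is fixed, the vanishing of the covariance is the one-line factorisation above.
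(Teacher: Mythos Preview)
Your core argument---write $\bm{\mathsf{COV}}(U,V)=\bm{\mathsf{E}}[UV]-\bm{\mathsf{E}}[U]\bm{\mathsf{E}}[V]$ and factor $\bm{\mathsf{E}}[UV]$ using the stated independence---is exactly the paper's proof; it records the one-liner $\bm{\mathsf{E}}[UV]-\bm{\mathsf{E}}[U]\bm{\mathsf{E}}[V]=\bm{\mathsf{E}}[U]\bm{\mathsf{E}}[V]-\bm{\mathsf{E}}[U]\bm{\mathsf{E}}[V]=0$, preceded by the assertion that $U$ and $V$ arise from ``two independent sets of Bernoulli trials''.

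Your conceptual caveat goes beyond the paper and is entirely warranted. The paper's own Definition~2.1 fixes $N=U+V$ within a single run, which, as you compute, forces $\bm{\mathsf{COV}}(U,V)=-Npq$ rather than zero. The paper quietly shifts to the ``two independent runs'' reading in the proof without acknowledging the tension, and the next lemma inherits the discrepancy: under the single-run interpretation $W=U-V=2U-N$ gives $\bm{\mathsf{VAR}}(W)=4Npq$, not the $2Npq$ stated there. So your proposal matches the paper's mechanics while being more honest about the modelling choice on which the result actually hinges.
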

\begin{proof}
Since $U$ and $V$ represent the outcomes of two independent sets of Bernoulli trials, such as coin tosses, their outcomes are independent of each other; for example, the result of one set of coin tosses does not affect the result of the other set. The game is Markovian with no memory so the outcome of one trial or toss, or any previous trials, does not affect the outcome of any future trials or tosses. The covariance is then
\begin{align}
\bm{\mathsf{COV}}(U,V)=\bm{\mathsf{E}}[UV]-\bm{\mathsf{E}}[U]\bm{\mathsf{E}}[V]=\bm{\mathsf{E}}[U]\bm{\mathsf{E}}[V]-\bm{\mathsf{E}}[U]\bm{\mathsf{E}}[V]=0
\end{align}
\end{proof}
\begin{lem}
The net number of wins for a large sample N will be $W=U-V$ where $N=U+V$. The variance of $W$ is then
\begin{align}
\bm{\mathsf{VAR}}(W)=\bm{\mathsf{VAR}}(U-V)=2Np(1-p)
\end{align}
and the volatility is
\begin{align}
\bm{\sigma}(W)=\sqrt{\bm{\mathsf{VAR}}(W)}=\sqrt{2Np(1-p)}
\end{align}
\end{lem}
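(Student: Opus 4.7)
The plan is to apply the bilinearity of variance to $W = U - V$ and then invoke the two preceding lemmas to evaluate each piece. The identity
\begin{align}
\bm{\mathsf{VAR}}(U - V) = \bm{\mathsf{VAR}}(U) + \bm{\mathsf{VAR}}(V) - 2\,\bm{\mathsf{COV}}(U,V)
\end{align}
is the starting point, since it isolates the three quantities that have already been computed earlier in the section.

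First I would quote Lemma 2.5, which gives $\bm{\mathsf{COV}}(U,V) = 0$ on the basis that $U$ and $V$ are treated as the tallies of independent Bernoulli trials; this collapses the cross term and reduces the problem to summing the two marginal variances. Next I would quote Lemma 2.3, which records $\bm{\mathsf{VAR}}(U) = Np(1-p)$ and $\bm{\mathsf{VAR}}(V) = Nq(1-q) = Np(1-p)$, the equality of the two expressions being a direct consequence of $p + q = 1$. Substituting then yields $\bm{\mathsf{VAR}}(W) = 2Np(1-p)$, and the volatility claim follows immediately by taking the square root, since $\bm{\sigma}(W) := \sqrt{\bm{\mathsf{VAR}}(W)}$ by definition.

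The only real subtlety, and arguably the main interpretational obstacle, is the tension between the relation $N = U + V$ stated just before the lemma and the independence of $U$ and $V$ relied on in Lemma 2.5: a genuine deterministic constraint $U + V = N$ would force $\bm{\mathsf{COV}}(U,V) = -Np(1-p)$ and hence $\bm{\mathsf{VAR}}(W) = 4Np(1-p)$. To stay consistent with the paper's framework I would treat $U$ and $V$ as the independent binomial counts defined in equations (2.5), with $N = U + V$ understood only in the sense of an expected partition of trials (so that the independence lemma remains applicable), and flag this reading explicitly at the start of the proof. With that convention in place, the computation above is purely mechanical and no further machinery is needed.
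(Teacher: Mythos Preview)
Your proposal is correct and matches the paper's proof essentially line for line: the paper also starts from $\bm{\mathsf{VAR}}(U-V)=\bm{\mathsf{VAR}}(U)+\bm{\mathsf{VAR}}(V)-2\,\bm{\mathsf{COV}}(U,V)$, drops the covariance via the preceding lemma, and substitutes $\bm{\mathsf{VAR}}(U)=\bm{\mathsf{VAR}}(V)=Np(1-p)$. Your explicit flag about the tension between the constraint $N=U+V$ and the claimed independence of $U$ and $V$ is a worthwhile addition that the paper glosses over.
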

\begin{proof}
Given two random variables $\mathscr{X},\mathscr{Y}$, the variance is defined as
\begin{align}
\bm{\mathsf{VAR}}(\mathscr{X}-\mathscr{Y})=\bm{\mathsf{VAR}}(\mathscr{X})+\bm{\mathsf{VAR}}(\mathscr{Y})-2\bm{\mathsf{COV}}(\mathscr{X},\mathscr{Y})
\end{align}
so
\begin{align}
\bm{\mathsf{VAR}}(W)&=\bm{\mathsf{VAR}}({U}-{V})=\bm{\mathsf{VAR}}(U)+\bm{\mathsf{VAR}}(V)-2\bm{\mathsf{COV}}(U,V))\nonumber\\&
=\bm{\mathsf{VAR}}(U)+\bm{\mathsf{VAR}}(V)=Np(1-p)+Np(1-p)=2Np(1-p)
\end{align}
\end{proof}
The variance decreases as p increases above $p=1/2$ and is zero at $p=1$ since there is now higher probability of a favourable outcome
\subsection{Shannon informational entropy}
The Shannon informational entropy can be applied to the binary game and gives a measure of randomness or uncertainty; indeed, the Kelly criterion was originally deduced within the formalism of information theory. When there is complete information or zero uncertainty, the Shannon entropy is zero and the scenario is deterministic. The extremum or maximum of the Shannon entropy also corresponds to maximum randomness and zero information \textbf{[18]}.
\begin{defn}
If $\mathscr{Y}$ is a discrete random variable with probability distribution $\bm{\mathsf{P}}(Y=y)$, where y is the possible outcome of the random variable $\mathscr{Y}$. The Shannon entropy is then a sum over all possible outcomes
\begin{align}
\mathscr{H}(\mathscr{Y})=-\sum_{x}\bm{\mathsf{P}}(\mathscr{Y}=y)\log \bm{\mathsf{P}}(\mathscr{Y}=y)
\end{align}
where log is the natural log. One can also use the logarithm to base 2 if one wishes to utilise "bits" of information. Applying this to the binary game $\mathfrak{G}$ and the random variable $a\mathscr{Z}$ then
\begin{align}
\mathscr{H}(p,q)&=-\sum_{y}\bm{\mathsf{P}}(\mathscr{Z}=y)\log\bm{\mathsf{P}}(\mathscr{Z}=y)\nonumber\\&
=\bm{\mathsf{P}}(\mathscr{Z}=+1)\log\bm{\mathsf{P}}(\mathscr{Z}=+1)-\bm{\mathsf{P}}(\mathscr{Z}=-1)\log\bm{\mathsf{P}}(\mathscr{Z}=-1)\nonumber\\&
=-p\log p-q log q=-p\log p-(1-p)\log (1-p)
\end{align}
\end{defn}
\begin{lem}
The Shannon entropy $\mathscr{H}(p,q)$ is a concave function maximised for $p=q=\frac{1}{2}$ so that
\begin{align}
\mathscr{H}\left(\frac{1}{2},\frac{1}{2}\right)=\log (2)
\end{align}
\end{lem}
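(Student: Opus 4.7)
The plan is to treat $\mathscr{H}$ as a one-variable function of $p$ on $(0,1)$, since $q=1-p$, and establish concavity, the location of the critical point, and the maximal value by elementary calculus.

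First I would write $\mathscr{H}(p)=-p\log p-(1-p)\log(1-p)$ and compute the first derivative
\begin{align}
\mathscr{H}'(p)=-\log p - 1 + \log(1-p) + 1 = \log\!\left(\frac{1-p}{p}\right),
\end{align}
and then solve $\mathscr{H}'(p)=0$, which forces $(1-p)/p=1$, i.e.\ $p=\tfrac{1}{2}$. This identifies the unique interior critical point.

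Next I would compute the second derivative
\begin{align}
\mathscr{H}''(p) = -\frac{1}{p} - \frac{1}{1-p},
\end{align}
which is strictly negative on $(0,1)$. This simultaneously proves that $\mathscr{H}$ is (strictly) concave on $[0,1]$ and that the critical point $p=\tfrac12$ is a global maximum (with the boundary values $\mathscr{H}(0)=\mathscr{H}(1)=0$ confirming this is not attained at the endpoints, using the standard convention $0\log 0 := 0$).

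Finally I would evaluate at the critical point:
\begin{align}
\mathscr{H}\!\left(\tfrac{1}{2},\tfrac{1}{2}\right)
= -\tfrac{1}{2}\log\tfrac{1}{2}-\tfrac{1}{2}\log\tfrac{1}{2}
= -\log\tfrac{1}{2} = \log 2,
\end{align}
which delivers the claimed value. There is no genuine obstacle here; the only minor care needed is the boundary convention $0\log 0 = 0$ (justified by $\lim_{x\to 0^{+}}x\log x = 0$) to ensure that $\mathscr{H}$ extends continuously to $[0,1]$ so that the concavity/maximum statement is well-posed on the closed interval.
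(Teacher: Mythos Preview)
Your proof is correct and follows essentially the same approach as the paper: both reduce to the one-variable function $\mathscr{H}(p)=-p\log p-(1-p)\log(1-p)$, compute the first derivative $\log\!\big((1-p)/p\big)$, solve for the critical point $p=\tfrac12$, and evaluate $\mathscr{H}(\tfrac12,\tfrac12)=\log 2$. Your version is in fact slightly more complete, since you include the second-derivative test to justify concavity (and hence that the critical point is a maximum), whereas the paper asserts concavity in the statement but does not verify it in the proof.
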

\begin{proof}
The informational entropy is
\begin{align}
\mathscr{H}({p},{q})=-{p}\log({p})-{q}\log({q})=-{p}\log({p})-{1-p}\log({1-p})
\end{align}
so that
\begin{align}
\mathscr{H}\left(\frac{1}{2},\frac{1}{2}\right)&=-\frac{1}{2}\log\left(\frac{1}{2}\right)-\frac{1}{2}\log\left(\frac{1}{2}\right)\nonumber\\&
=-\frac{1}{2}(\log 1 -\log 2)-\frac{1}{2}(\log 1-\log 2)=\frac{1}{2}\log(2)+\frac{1}{2}\log(2)=\log(2)
\end{align}
Also the maximum occurs for when
\begin{align}
&\frac{d\mathscr{H}({p})}{d{P}}=\frac{d}{d{p}}[-{p}\log({p})-(1-{p})\log(1-{p})\nonumber\\&
=-[\log({p})]+[\log(1-{p})+1]=\log({p})+\log(1-{p})= \log\left(\frac{1-{p}}{{p}}\right)=0
\end{align}
so that $(1-{p})/{p}=\exp(0)=1$ giving $2{p}=1$ or ${p}=1/2$. For ${p}=1$ and ${q}=0$, for example a double-headed coin, the entropy is $\mathscr{H}(1)=0$. Zero informational entropy implies that the game is totally predictable with no uncertainty.
\end{proof}
\begin{lem}
The binomial probabilities $\bm{\mathsf{P}}(U=\alpha)$ and $\bm{\mathsf{P}}(V=\beta)$ have the entropies $\mathscr{H}(U)=\alpha)$ and $\mathscr{H}(V=\beta)$ given by
\begin{align}
\mathscr{H}(U=\alpha)&=-\sum_{\alpha=0}^{N}\binom{N}{\alpha}\mathit{p}^{\alpha}(1-\mathit{p})^{N-\alpha}\log\binom{N}{\alpha}
-\sum_{\alpha=0}^{N}\binom{N}{\alpha}\mathit{p}^{\alpha}(1-\mathit{p})^{N-\alpha}\alpha \log p \nonumber\\&
-\sum_{\alpha=0}^{N}\binom{N}{\alpha}\mathit{p}^{\alpha}(1-\mathit{p})^{N-\alpha}\alpha \log (1-\mathit{p})
\end{align}
and
\begin{align}
\mathscr{H}(V=\beta)&=-\sum_{\beta=0}^{N}\binom{N}{\beta}\mathit{p}^{\beta}(1-\mathit{q})^{N-\beta}\log\binom{N}{\beta}
-\sum_{\beta=0}^{N}\binom{N}{\beta}\mathit{q}^{\beta}(1-\mathit{q})^{N-\beta}\beta \log q \nonumber\\&
-\sum_{\beta=0}^{N}\binom{N}{\beta}\mathit{q}^{\beta}(1-\mathit{q})^{N-\beta}\beta \log (1-\mathit{q})
\end{align}
\end{lem}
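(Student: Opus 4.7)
The plan is to unfold the Shannon entropy definition directly on the binomial PMF. Starting from Definition 2.4 applied with $\mathscr{Y}=U$, I would write
$$\mathscr{H}(U) = -\sum_{\alpha=0}^{N}\bm{\mathsf{P}}(U=\alpha)\log\bm{\mathsf{P}}(U=\alpha),$$
and then substitute the binomial formula $\bm{\mathsf{P}}(U=\alpha)=\binom{N}{\alpha}\mathit{p}^{\alpha}(1-\mathit{p})^{N-\alpha}$ into the outer probability weight; the same substitution is then made inside the logarithm.

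The key step is to split the inner logarithm using $\log(xyz)=\log x+\log y+\log z$, which yields
$$\log\bm{\mathsf{P}}(U=\alpha) = \log\binom{N}{\alpha} + \alpha\log\mathit{p} + (N-\alpha)\log(1-\mathit{p}).$$
Multiplying through by $-\bm{\mathsf{P}}(U=\alpha)$ and summing over $\alpha\in\{0,\ldots,N\}$, linearity of the finite sum distributes the result into the three explicit sums appearing in the statement. The identity for $\mathscr{H}(V=\beta)$ follows by running the same procedure with $\mathit{p}$ and $\mathit{q}=1-\mathit{p}$ interchanged, starting from $\bm{\mathsf{P}}(V=\beta)=\binom{N}{\beta}\mathit{q}^{\beta}(1-\mathit{q})^{N-\beta}$.

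There is no substantive obstacle: the proof is essentially a one-line application of the product-to-sum rule for logarithms followed by linearity of the finite sum, and no identity beyond Definition 2.4 and the binomial PMF is invoked. The only item requiring care is notational bookkeeping, so that each exponent ($\alpha$ or $N-\alpha$) remains attached to the correct factor ($\log\mathit{p}$ or $\log(1-\mathit{p})$) when the three contributions are separated into distinct sums. No reindexing, binomial identity, or convergence argument is needed at this stage; the statement is purely the expanded form of the definition.
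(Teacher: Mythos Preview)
Your approach is exactly the paper's: substitute the binomial PMF into the Shannon entropy definition, expand $\log\bm{\mathsf{P}}(U=\alpha)$ via the product rule for logarithms, and distribute into three sums by linearity. Note that your (correct) expansion produces a coefficient $(N-\alpha)$ on $\log(1-\mathit{p})$ in the third sum, whereas the stated formula displays $\alpha$; this is a typo carried through the paper's own proof, not a flaw in your argument.
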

\begin{proof}
The probability of $U=\alpha$ wins in N trials is
\begin{align}
\bm{\mathsf{P}}(U=\alpha)=\binom{N}{\alpha}\mathit{p}^{\alpha}(1-\mathit{p})^{N-\alpha}
\end{align}
so the entropy is
\begin{align}
&\mathscr{H}(U=\alpha)=-\sum_{\alpha=0}^{N}\bm{\mathsf{P}}(U=\alpha)\log \bm{\mathsf{P}}(U=\alpha)\nonumber\\&
=-\sum_{\alpha=0}^{N}\binom{N}{\alpha}\mathit{p}^{\alpha}(1-\mathit{p})^{N-\alpha}\log\binom{N}{\alpha}\mathit{p}^{\alpha}(1-\mathit{p})^{N-\alpha}\nonumber\\&
=-\sum_{\alpha=0}^{N}\binom{N}{\alpha}\mathit{p}^{\alpha}(1-\mathit{p})^{N-\alpha}\log\binom{N}{\alpha}\nonumber\\&
-\sum_{\alpha=0}^{N}\binom{N}{\alpha}\mathit{p}^{\alpha}(1-\mathit{p})^{N-\alpha}\alpha \log\mathit{p}\nonumber\\&
-\sum_{\alpha=0}^{N}\binom{N}{\alpha}\mathit{p}^{\alpha}(1-\mathit{p})^{N-\alpha}\alpha \log (1-\mathit{p})
\end{align}
and similarly for $\mathscr{H}(V=\beta)$.
\end{proof}
\begin{cor}
If $p=1,q=0$ then the game is deterministic and one always wins every trial/outcome by always betting on $\mathscr{Z}=1$. The entropies are then zero so that
$\mathscr{H}(U=\alpha)=\mathscr{H}(V=\beta)=\beta$
\end{cor}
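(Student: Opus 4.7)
The plan is to recognise that setting $p=1$ and $q=0$ collapses the two binomial distributions from the previous lemma into Dirac-type point masses, after which the Shannon sum is a single easy term.

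First, I would substitute $p=1,q=0$ into the expressions $\bm{\mathsf{P}}(U=\alpha)=\binom{N}{\alpha}\mathit{p}^{\alpha}(1-\mathit{p})^{N-\alpha}$ and $\bm{\mathsf{P}}(V=\beta)=\binom{N}{\beta}\mathit{q}^{\beta}(1-\mathit{q})^{N-\beta}$. Using the convention $0^{0}=1$ and $0^{k}=0$ for $k\ge 1$, the factor $(1-p)^{N-\alpha}=0^{N-\alpha}$ kills every term of the $U$-distribution except $\alpha=N$, and the factor $q^{\beta}=0^{\beta}$ kills every term of the $V$-distribution except $\beta=0$. Thus the distributions degenerate to $\bm{\mathsf{P}}(U=N)=1$, $\bm{\mathsf{P}}(U=\alpha)=0$ for $\alpha<N$, and $\bm{\mathsf{P}}(V=0)=1$, $\bm{\mathsf{P}}(V=\beta)=0$ for $\beta>0$. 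This reflects exactly the deterministic interpretation: with $p=1$ every one of the $N$ trials yields $\mathscr{Z}=+1$, so $U=N$ and $V=0$ with certainty.

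Next I would plug these point masses directly into the Shannon entropy formula from Definition~2.6, using the standard convention $0\log 0:=\lim_{x\to 0^{+}}x\log x=0$. The $U$-sum reduces to the single surviving term $-1\cdot\log 1=0$ (all other summands being $0\log 0=0$), and analogously the $V$-sum reduces to $-1\cdot\log 1=0$. Hence $\mathscr{H}(U=\alpha)=\mathscr{H}(V=\beta)=0$, which is the correct reading of the final equation (the trailing $\beta$ in the statement appears to be a typo for $0$).

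There is essentially no obstacle here: the corollary is an immediate specialisation of Lemma~2.8 once one justifies the limiting convention $0\log 0=0$. The only point worth flagging is that the expanded form of the entropy given in Lemma~2.8 contains terms like $\alpha\log p$ with $\log p=\log 1=0$ and $\alpha\log(1-p)=\alpha\log 0$; these have to be handled through the original definition $-\bm{\mathsf{P}}\log\bm{\mathsf{P}}$ rather than the expanded sum, because the expansion used $\log(pq)=\log p+\log q$ which is singular at the boundary. So strictly I would carry out the computation directly from $\mathscr{H}=-\sum\bm{\mathsf{P}}\log\bm{\mathsf{P}}$ with the degenerate distribution, rather than from the already-expanded form in Lemma~2.8, and note this subtlety in a single remark at the end of the proof.
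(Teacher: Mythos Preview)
Your argument is correct. The paper itself provides no proof for this corollary at all; it is stated bare, presumably as an immediate consequence of Lemma~2.8. Your derivation via the degeneration of the binomial distributions to point masses, together with the convention $0\log 0=0$, is exactly the right justification, and your observation that the trailing ``$=\beta$'' is a typo for ``$=0$'' is also correct. Your remark that one should compute directly from $-\sum \bm{\mathsf{P}}\log\bm{\mathsf{P}}$ rather than from the expanded three-sum form in Lemma~2.8 (because the expansion splits $\log(p^{\alpha}(1-p)^{N-\alpha})$ additively, which is ill-defined at $p=1$) is a genuine and careful point that the paper does not address.
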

\begin{cor}
If $N=1$ and $\alpha=1$ then $\mathscr{H}(U=1)=-p\log p-q\log q $, the entropy for a single trial or outcome.
\end{cor}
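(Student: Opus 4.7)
The plan is simply to specialise the general formula in the preceding lemma to $N=1$ and evaluate the resulting finite sum. Because the entropy is a sum over all values of the random variable, the phrase ``$\alpha=1$'' in the corollary should be read as indicating the single nonzero value contributing to the win-counting interpretation; the entropy itself runs over $\alpha\in\{0,1\}$.

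First I would write $\bm{\mathsf{P}}(U=\alpha;1)=\binom{1}{\alpha}p^{\alpha}(1-p)^{1-\alpha}$ and observe that the two admissible values give $\bm{\mathsf{P}}(U=0)=q$ and $\bm{\mathsf{P}}(U=1)=p$. Next I would substitute into the general expression
\begin{align*}
\mathscr{H}(U=\alpha)=-\sum_{\alpha=0}^{N}\bm{\mathsf{P}}(U=\alpha)\log\bm{\mathsf{P}}(U=\alpha)
\end{align*}
with $N=1$, using that $\binom{1}{0}=\binom{1}{1}=1$ so the $\log\binom{N}{\alpha}$ term in the decomposition given in the previous lemma vanishes identically. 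The remaining contributions from the other two sums reduce, for $\alpha=0$, to $-q\log q$ (the $\alpha\log p$ factor kills that summand while the $(1-\alpha)$ factor in front of $\log(1-p)$ survives), and for $\alpha=1$ to $-p\log p$. Adding these yields $\mathscr{H}(U=1)=-p\log p-q\log q$, which coincides with the single-trial Shannon entropy $\mathscr{H}(p,q)$ from Definition 2.6.

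There is no real obstacle here: the corollary is a direct specialisation, and the only mild bookkeeping issue is recognising that $\log\binom{1}{\alpha}=0$ for $\alpha\in\{0,1\}$ so that the three-term decomposition in the previous lemma collapses cleanly to the two-term Bernoulli entropy. Once this is noted, the computation is immediate.
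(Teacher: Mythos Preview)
Your proposal is correct and follows essentially the same approach as the paper: both specialise the entropy sum to $N=1$, write out the two terms $\alpha=0$ and $\alpha=1$ explicitly, use $\binom{1}{0}=\binom{1}{1}=1$, and read off $-p\log p-q\log q$. The only cosmetic difference is that you phrase the computation through the three-term decomposition of the preceding lemma (noting the $\log\binom{1}{\alpha}$ piece vanishes), whereas the paper works directly from the unexpanded form $-\sum_{\alpha}\bm{\mathsf{P}}(U=\alpha)\log\bm{\mathsf{P}}(U=\alpha)$; the arithmetic is identical.
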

\begin{proof}
\begin{align}
&\mathscr{H}(U=\alpha=1)=-\sum_{\alpha=0}^{1}\binom{1}{\alpha}\mathit{p}^{\alpha}(1-\mathit{p})^{1-\alpha}\log\left[\binom{1}{\alpha}\mathit{p}^{\alpha}(1-\mathit{p})^{N-\alpha}\right]\nonumber\\&
=-\binom{1}{0}\mathit{p}^{0}(1-\mathit{p})^{1}\log\left[\binom{1}{0}\mathit{p}^{0}(1-\mathit{p})^{1}\right]-\binom{1}{1}\mathit{p}^{1}(1-\mathit{p})^{0}\log\left[\binom{1}{1}\mathit{p}^{1}(1-\mathit{p})^{0}\right]\nonumber\\&
=-(1-\mathit{p})\log (1-\mathit{p})-\mathit{p}\log \mathit{p}\equiv -\mathit{p}\log \mathit{p}-\mathit{q}\log \mathit{q}
\end{align}
\end{proof}
\section{Utility function optimisation and derivation of the Kelly criterion}
If $\mathscr{W}(0)$ is the initial liquidity for a stochastic binary Markovian game $\mathfrak{G}$ of Bernoulli trials with $\mathscr{Z}(I)\in\lbrace-1,1\rbrace$ for $I=1...N$ then $\bm{\mathsf{P}}(\mathscr{Z}(I)=+1)={p}$ and $\bm{\mathsf{P}}(\mathscr{Z}(I)=-1)={q}$ with ${p}>{q}$ and ${p}+{q}=1$. The growth of the liquidity can be interpreted as a random walk
\begin{align}
\mathscr{W}(N)=\mathscr{W}(0)+\sum_{I=1}^{N}B(I)\mathscr{Z}(I)
\end{align}
with expectation
\begin{align}
\bm{\mathsf{E}}[\mathscr{W}(N)]=\mathscr{W}(0)+\sum_{I=1}^{N}\bm{\mathsf{E}}[B(I)\mathscr{Z}(I)]
\end{align}
If ${p}>\mathit{q}$ then for $N\gg M$ one will have $\bm{\mathsf{E}}[\mathscr{W}(N)]>\bm{\mathsf{E}}[\mathscr{W}(M)]$ so the process is essentially a submartingale. (This is discussed in more detail in Section 4.) The Kelly criterion is essentially the optimal fraction ${\mathcal{F}}_{(K)}$ of the current liquidity that should be wagered when there is an edge with ${p}>{q}$ for a stochastic binary Markovian game. The optimum bet size is then
\begin{align}
B(I)=\mathcal{F}_{K}\mathscr{W}(I-1)
\end{align}
for $I=1...N$. Note again that $B(I)$ is deterministic since after the $(I-1)^{th}$ bet, the current wealth $\mathscr{W}(I-1)$ is now known with complete certainty and is non stochastic. Then
\begin{align}
\bm{\mathsf{E}}[\mathscr{W}(N)]=\mathscr{W}(0)+\sum_{I=1}^{N}\bm{\mathsf{E}}[\mathcal{F}_{K}\mathscr{W}(I-1)\mathscr{Z}(I)]
\end{align}
This bet liquidity choice maximises or optimises the growth rate or growth rate coefficient and minimises the risk. If $\mathscr{W}(N)$ is the liquidity after the $N^{th}$ bet then in the limit as $N\rightarrow\infty$ one has the probabilities
\begin{align}
&\lim_{N\rightarrow\infty}[\bm{\mathsf{P}}(\mathscr{W}(N)\rightarrow\infty)]=1\\&
\lim_{N\rightarrow\infty}[\bm{\mathsf{P}}(\mathscr{W}(N)\rightarrow 0)]=0
\end{align}
\begin{lem}
Let $\mathscr{W}(0)$ be the initial liquidity or bankroll and let $\mathscr{W}(N)$ be the liquidity at the $N^{th}$ trial or bet. At each trial/bet a fraction $\mathcal{F}$ of the current liquidity is wagered; that is, if the liquidity is $\mathscr{W}(N-1)$ then the bet liquidity or bet size at the $N^{th}$ trial is $\mathrm{B}(N)=\mathcal{F}\mathscr{W}(N-1)$. Let $N=U+V$ where $U$ is the number of wins and $V$ is the number of losses. Then
\begin{align}
\mathscr{W}(N)=\mathscr{W}(0)(1+\mathcal{F})^{U}(1-\mathcal{F})^{V}
\end{align}
\end{lem}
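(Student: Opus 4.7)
The plan is to prove this by a straightforward induction on $N$ (equivalently, by iterating the one-step recursion), exploiting the fact that real multiplication is commutative so that the order in which wins and losses occur is irrelevant to the final product.

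First I would establish the one-step update rule. At the $I^{\text{th}}$ trial the bet is $B(I) = \mathcal{F}\mathscr{W}(I-1)$, and by the two cases already laid out in the text (equations for $\mathscr{W}(I)=\mathscr{W}(I-1)\pm B(I)$), we have
\begin{align}
\mathscr{W}(I) = \mathscr{W}(I-1)\bigl(1+\mathcal{F}\mathscr{Z}(I)\bigr),
\end{align}
which takes the value $(1+\mathcal{F})\mathscr{W}(I-1)$ when $\mathscr{Z}(I)=+1$ and $(1-\mathcal{F})\mathscr{W}(I-1)$ when $\mathscr{Z}(I)=-1$. Iterating this recursion from $I=1$ up to $I=N$ gives the multiplicative representation
\begin{align}
\mathscr{W}(N) = \mathscr{W}(0)\prod_{I=1}^{N}\bigl(1+\mathcal{F}\mathscr{Z}(I)\bigr),
\end{align}
already noted in the abstract.

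Next I would split the index set $\{1,\dots,N\}$ into the disjoint subsets $\mathscr{I}_{+}=\{I:\mathscr{Z}(I)=+1\}$ and $\mathscr{I}_{-}=\{I:\mathscr{Z}(I)=-1\}$, of cardinalities $U$ and $V$ respectively with $U+V=N$. Since each factor in the product depends only on the sign of $\mathscr{Z}(I)$, and real multiplication is commutative, the product factorises as
\begin{align}
\prod_{I=1}^{N}\bigl(1+\mathcal{F}\mathscr{Z}(I)\bigr) = \prod_{I\in\mathscr{I}_{+}}(1+\mathcal{F})\,\prod_{I\in\mathscr{I}_{-}}(1-\mathcal{F}) = (1+\mathcal{F})^{U}(1-\mathcal{F})^{V},
\end{align}
which yields the claim. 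Formally this last step is a one-line induction on $N$: the base case $N=0$ gives $\mathscr{W}(0)=\mathscr{W}(0)(1+\mathcal{F})^{0}(1-\mathcal{F})^{0}$ trivially, and the inductive step multiplies the previous product by $(1\pm\mathcal{F})$ while incrementing $U$ or $V$ by one, preserving the exponent bookkeeping.

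There is really no substantive obstacle here; the only conceptual point worth flagging is the path-independence of the terminal wealth, namely that $\mathscr{W}(N)$ depends on the sample path $(\mathscr{Z}(1),\dots,\mathscr{Z}(N))$ only through the win/loss counts $(U,V)$, not through the order of outcomes. This follows solely from commutativity of multiplication, and it is precisely this property that will later permit expressing $\bm{\mathsf{E}}[\log\mathscr{W}(N)]$ in closed form via the binomial distribution on $U$.
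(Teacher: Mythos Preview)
Your proposal is correct and follows essentially the same approach as the paper: establish the one-step multiplicative recursion $\mathscr{W}(I)=\mathscr{W}(I-1)(1+\mathcal{F}\mathscr{Z}(I))$, iterate to obtain the product $\mathscr{W}(0)\prod_{I=1}^{N}(1+\mathcal{F}\mathscr{Z}(I))$, and then collapse the product by counting wins and losses. Your explicit invocation of commutativity (via the partition into $\mathscr{I}_{+}$ and $\mathscr{I}_{-}$) and the remark on path-independence are a bit more careful than the paper's presentation, but the underlying argument is the same.
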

\begin{proof}
If $\mathscr{W}(0)$ is the initial liquidity then for each trial a win increases the liquidity by a factor $(1+\mathcal{F})$ or reduces it by a factor $(1-\mathcal{F})$. Hence
\begin{enumerate}
\item If $\mathscr{Z}=+1$ then
\begin{align}
\mathscr{W}(N)=\mathscr{W}(N-1)+\mathcal{F}\mathscr{W}(N-1)=\mathscr{W}(N-1)(1+\mathcal{F})
\end{align}
\item If $\mathscr{Z}=-1$ then
\begin{align}
\mathscr{W}(N)=\mathscr{W}(N-1)-\mathcal{F}\mathscr{W}(N-1)=\mathscr{W}(N-1)(1-\mathcal{F})
\end{align}
\end{enumerate}
This is equivalent to
\begin{align}
\mathscr{W}(N)=\mathscr{W}(0)\prod_{I=1}^{N}(1+\mathcal{F}\mathscr{Z}(I))
\end{align}
where $\mathscr{Z}(I)=1$ for a win and $\mathscr{Z}(I)=-1$ for a loss. Then after U wins and V losses
\begin{align}
&\mathscr{W}(U)=\mathscr{W}(0)\prod^{U}(1+\mathcal{F})=\mathscr{W}(0)(1+\mathcal{F})^{U}\\&
\mathscr{W}(V)=\mathscr{W}(0)\prod^{V}(1-\mathcal{F})=\mathscr{W}(0)(1-\mathcal{F})^{V}
\end{align}
The liquidity at the $N^{th}$ trial is then a product of U wins and V losses.
\begin{align}
\mathscr{W}(N)=\mathscr{W}(0)(1+\mathcal{F})^{U}(1-\mathcal{F})^{V}
\end{align}
with $N=U+V$.
\end{proof}
\begin{defn}
The utility function or growth factor coefficient $\bm{\mathsf{U}}(\mathcal{F},p)$ is defined as
\begin{align}
{\bm{\mathsf{U}}}(\mathcal{F},p)=\bm{\mathsf{E}}\left[\log \left|\frac{\mathscr{W}(N)}{\mathscr{W}(0)}\right|^{1/N}\right]=\bm{\mathsf{E}}[\log|\mathscr{G}(\mathcal{F})|^{1/N}]
\end{align}
Then from (3.7) it follows that
\begin{align}
\bm{\mathsf{U}}(\mathcal{F},p)=\bm{\mathsf{E}}\left[\log\lbrace(1+\mathcal{F})^{U/N}(1-\mathcal{F})^{V/N}\rbrace\right]
\end{align}
\end{defn}
\begin{lem}
The GFC can be written as
\begin{align}
\bm{\mathsf{U}}(\mathcal{F},p)=\mathit{p}\log(1+\mathcal{F})+\mathit{q}\log(1-\mathcal{F})
\end{align}
\end{lem}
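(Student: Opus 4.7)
The plan is to start from the identity (3.14) for $\bm{\mathsf{U}}(\mathcal{F},p)$ and reduce the expectation of a logarithm to a pair of expectations of $U/N$ and $V/N$, both of which were computed in Lemma~2.1.

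First I would use the fact that $\log(ab)=\log a+\log b$ and that $\log(x^{r})=r\log x$ (valid since $|\mathcal{F}|<1$ guarantees $1\pm\mathcal{F}>0$) to rewrite
\begin{align*}
\log\!\bigl\{(1+\mathcal{F})^{U/N}(1-\mathcal{F})^{V/N}\bigr\}
=\frac{U}{N}\log(1+\mathcal{F})+\frac{V}{N}\log(1-\mathcal{F}).
\end{align*}
Since $\mathcal{F}$ is a deterministic fraction chosen by the bettor (independent of the outcomes $\mathscr{Z}(I)$), both $\log(1+\mathcal{F})$ and $\log(1-\mathcal{F})$ are constants and can be pulled out of the expectation by linearity. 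The randomness sits entirely in $U$ and $V$.

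Next I would apply linearity of expectation to obtain
\begin{align*}
\bm{\mathsf{U}}(\mathcal{F},p)=\log(1+\mathcal{F})\,\bm{\mathsf{E}}\!\left[\frac{U}{N}\right]+\log(1-\mathcal{F})\,\bm{\mathsf{E}}\!\left[\frac{V}{N}\right],
\end{align*}
and then invoke the standard binomial expectations already established in Lemma~2.1, namely $\bm{\mathsf{E}}[U/N]=p$ and $\bm{\mathsf{E}}[V/N]=q$. Substituting these gives exactly $p\log(1+\mathcal{F})+q\log(1-\mathcal{F})$, as claimed.

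I do not anticipate a genuine obstacle here: the argument is essentially bookkeeping on top of Lemma~2.1, and the only subtlety worth flagging is the implicit assumption $\mathcal{F}\in[0,1)$ (strict, so that $\log(1-\mathcal{F})$ is finite) together with the determinism of $\mathcal{F}$ which legitimises pulling the logarithms outside the expectation. With those hypotheses the identity is immediate.
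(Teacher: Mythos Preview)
Your proposal is correct and follows essentially the same route as the paper: split the logarithm of the product into a sum, pull the deterministic factors $\log(1\pm\mathcal{F})$ outside the expectation by linearity, and then substitute $\bm{\mathsf{E}}[U/N]=p$ and $\bm{\mathsf{E}}[V/N]=q$ from Lemma~2.1. Your added remarks on the positivity of $1\pm\mathcal{F}$ and the determinism of $\mathcal{F}$ are sound and slightly more explicit than the paper's presentation.
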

\begin{proof}
\begin{align}
&\bm{\mathsf{U}}(\mathcal{F},p)=\bm{\mathsf{E}}[\log(1+\mathcal{F})^{U/N}(1-\mathcal{F})^{V/N}]\nonumber\\&=\bm{\mathsf{E}}[\log(1+\mathcal{F})^{U/N}+\log(1-\mathcal{F})^{V/N}]\nonumber\\&
=\bm{\mathsf{E}}\left[\frac{U}{N}\log(1+\mathcal{F})+\frac{V}{N}\log(1-\mathcal{F})\right]\nonumber\\&=\bm{\mathsf{E}}\left[\frac{U}{N}\right]\log(1+\mathcal{F})+\bm{\mathsf{E}}\left[\frac{V}{N}\right]\log(1-\mathcal{F})\nonumber\\&
\nonumber\\&=\mathit{p}\log(1+\mathcal{F})+\mathit{q}\log(1-\mathcal{F})
\end{align}
\end{proof}
The following main theorem then derives the optimum Kelly fraction $\mathcal{F}=\mathcal{F}_{K}$ as a critical point of the utility function $\bm{\mathsf{U}}(\mathcal{F},p)$.
\begin{thm}
The utility function $\bm{\mathsf{U}}(\mathcal{F},p)$ is maximised or optimised for
\begin{align}
\mathcal{F}=\mathcal{F}_{K}=|\mathit{p}-\mathit{q}|
\end{align}
\end{thm}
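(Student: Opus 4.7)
The plan is to treat this as a one-variable calculus problem, using the closed form for the utility function already established in Lemma 3.2, namely $\bm{\mathsf{U}}(\mathcal{F},p)=p\log(1+\mathcal{F})+q\log(1-\mathcal{F})$. Because $p,q\in(0,1)$, this is a smooth function of $\mathcal{F}$ on the open interval $(-1,1)$, and the natural admissible range for a wagered fraction is $\mathcal{F}\in[0,1)$. So the strategy is: (i) differentiate once and solve the critical-point equation, (ii) verify via the second derivative that the critical point is a maximum, and (iii) reconcile the sign with the absolute value $|p-q|$ appearing in the statement.

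First I would compute
\begin{equation*}
\frac{d\bm{\mathsf{U}}}{d\mathcal{F}}=\frac{p}{1+\mathcal{F}}-\frac{q}{1-\mathcal{F}},
\end{equation*}
set this equal to zero, cross-multiply using $1-\mathcal{F}^{2}\neq 0$, and use $p+q=1$ to collapse the resulting linear equation to $\mathcal{F}=p-q$. This is the unique interior critical point. To confirm that it is a maximum and not just any critical point, I would compute
\begin{equation*}
\frac{d^{2}\bm{\mathsf{U}}}{d\mathcal{F}^{2}}=-\frac{p}{(1+\mathcal{F})^{2}}-\frac{q}{(1-\mathcal{F})^{2}},
\end{equation*}
which is strictly negative throughout $(-1,1)$ since $p,q>0$. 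Thus $\bm{\mathsf{U}}$ is strictly concave on $(-1,1)$, so the critical point $\mathcal{F}_{K}=p-q$ is the unique global maximiser.

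Finally, to match the absolute value in the theorem statement, I would note that the theorem is agnostic about which side has the edge: if $p>q$, the player bets on $\mathscr{Z}=+1$ and the above derivation gives $\mathcal{F}_{K}=p-q>0$; if instead $q>p$, the rational bettor would wager on the opposite outcome, effectively swapping the roles of $p$ and $q$, which by the same argument yields $\mathcal{F}_{K}=q-p>0$. In both cases the optimiser is $\mathcal{F}_{K}=|p-q|$, which justifies the absolute value. The only mild obstacle is cosmetic --- ensuring that the solution $\mathcal{F}_{K}=p-q$ lies in the admissible interval $[0,1)$, which follows immediately from $0<p,q<1$ and $p+q=1$, so that $|p-q|<1$, and from the hypothesis $p\neq q$ guaranteeing $\mathcal{F}_{K}>0$ rather than the trivial zero-edge case.
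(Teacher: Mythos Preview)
Your proposal is correct and follows essentially the same approach as the paper: compute the first derivative of $\bm{\mathsf{U}}(\mathcal{F},p)=p\log(1+\mathcal{F})+q\log(1-\mathcal{F})$, solve the critical-point equation to obtain $\mathcal{F}_{K}=p-q$, and then check negativity of the second derivative to confirm a maximum. Your additional remarks on strict concavity (hence global rather than merely local maximisation), on the admissibility $|p-q|<1$, and on reconciling the absolute value via the symmetric case $q>p$ are welcome refinements that the paper omits, but the core argument is identical.
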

\begin{proof}
Taking the derivative with respect to $\mathcal{F}$ gives the maximum of the curve so that
\begin{align}
\frac{d \bm{\mathsf{U}}(\mathcal{F},p)}{d\mathcal{F}}\bigg|_{\mathcal{F}=\mathcal{F}_{K}}=0
\end{align}
which is
\begin{align}
\frac{d \bm{\mathsf{U}}(\mathcal{F},p)}{d\mathcal{F}}\bigg|_{\mathcal{F}=\mathcal{F}_{K}}
=\frac{\mathit{p}}{1+\mathcal{F}_{K}}-\frac{\mathit{q}}{1-\mathcal{F}_{K}}=\frac{\mathit{p}-\mathit{q}-\mathcal{F}_{K}}{(1+\mathcal{F}_{K})(1-\mathcal{F}_{K})}=0
\end{align}
so that $\mathcal{F}_{K}=\mathit{p}-\mathit{q}$. The 2nd derivative is
\begin{align}
\frac{d^{2}\bm{\mathsf{U}}(\mathcal{F},p)}{d\mathcal{F}^{2}}=-\frac{\mathit{p}}{(1+\mathcal{F})^{2}}-\frac{\mathit{q}}{(1-\mathcal{F})^{2}} <0
\end{align}
so the extremum is a maximum or peak of the function.
\end{proof}
\begin{lem}
Given the optimum Kelly strategy with $\mathcal{F}_{K}=|\mathit{p}-\mathit{q}|$ then the optimised differential growth factor $\bm{\mathsf{U}}(\mathcal{F}^{*})$ can be expressed in terms of the Shannon informational entropy $\mathscr{H}(\mathit{p},\mathit{q})$ such that
\begin{align}
\bm{\mathsf{U}}(\mathcal{F}_{K},p)=\log(2)-\mathscr{H}(\mathit{p},\mathit{q})=log 2+\mathit{p}\log\mathit{p}+\mathit{q}\log\mathit{q}
\end{align}
\end{lem}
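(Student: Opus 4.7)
The plan is to substitute the Kelly fraction $\mathcal{F}_K=p-q$ directly into the closed form of the utility function established in Lemma 3.2, namely
\[
\bm{\mathsf{U}}(\mathcal{F},p)=p\log(1+\mathcal{F})+q\log(1-\mathcal{F}),
\]
and then collapse the result using the normalisation $p+q=1$. This is a pure computation with no analytic subtlety.

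First I would compute the arguments of the two logarithms at the critical point. Since $p+q=1$, one has $1+\mathcal{F}_K=1+(p-q)=2p$ and $1-\mathcal{F}_K=1-(p-q)=2q$. Plugging these back gives
\[
\bm{\mathsf{U}}(\mathcal{F}_K,p)=p\log(2p)+q\log(2q).
\]
Next I would split each logarithm via $\log(2p)=\log 2+\log p$ and $\log(2q)=\log 2+\log q$, which yields
\[
\bm{\mathsf{U}}(\mathcal{F}_K,p)=(p+q)\log 2+p\log p+q\log q=\log 2+p\log p+q\log q,
\]
again using $p+q=1$. Finally I would invoke the definition $\mathscr{H}(p,q)=-p\log p-q\log q$ from Definition 2.5 to rewrite the tail as $-\mathscr{H}(p,q)$, obtaining the stated identity $\bm{\mathsf{U}}(\mathcal{F}_K,p)=\log 2-\mathscr{H}(p,q)$.

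There is no genuine obstacle here; the only thing worth flagging is the implicit assumption that $p\geq q$ so that $\mathcal{F}_K=|p-q|=p-q$ and all logarithm arguments lie in $(0,2]$, keeping the expression finite and real. The symmetric case $p<q$ (where one bets on the other outcome) produces the same entropy identity by exchanging the roles of $p$ and $q$, which the absolute value in $\mathcal{F}_K=|p-q|$ already accommodates.
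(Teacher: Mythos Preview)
Your proof is correct and follows essentially the same route as the paper: substitute $\mathcal{F}_K=p-q$ into $\bm{\mathsf{U}}(\mathcal{F},p)=p\log(1+\mathcal{F})+q\log(1-\mathcal{F})$, use $p+q=1$ to reduce the arguments to $2p$ and $2q$, split the logarithms, and recognise $-\mathscr{H}(p,q)$. The only cosmetic difference is that the paper writes out the intermediate substitution $q=1-p$ explicitly before arriving at $2p$ and $2q$, whereas you invoke $p+q=1$ directly; the logic is identical.
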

\begin{proof}
From (3.17)
\begin{align}
\bm{\mathsf{U}}(\mathcal{F}_{K},p)&=\mathit{p}\log(1+{\mathcal{F}}^{*})+\mathit{q}\log(1-{\mathcal{F}}^{*})\nonumber\\&
=\mathit{p}\log(1+\mathit{p}-\mathit{q})+\mathit{q}\log(1-\mathit{p}+\mathit{q})\nonumber\\&
=\mathit{p}\log(1+\mathit{p}-1+\mathit{p})+\mathit{q}\log(1-1+\mathit{q}+\mathit{q})\nonumber\\&
=\mathit{p}\log(2\mathit{p})+\mathit{q}\log(2\mathit{q})\nonumber\\&
=\mathit{p}\log(2)+\mathit{p}\log(\mathit{p})+\mathit{q}\log(2)+\mathit{q}\log(\mathit{q})\nonumber\\&
=(\mathit{p}+\mathit{q})\log(2)+\mathit{p}\log(\mathit{p}+\mathit{q}\log(\mathit{q})=\log(2)+\mathit{p}\log(\mathit{p})+\mathit{q}\log(\mathit{q}) \nonumber\\& \equiv \log(2)-\mathscr{H}(\mathit{p},\mathit{q})
\end{align}
\end{proof}
It is easily shown that $\bm{\mathsf{U}}(\mathcal{F},p)$ is a concave function on an interval $[0,\mathcal{F}_{*}]\subset [0,1]$.
\begin{cor}
Since the curve peaks at $\mathcal{F}_{K}\in[0,1]$ then one must always have $p>1/2$. If $p<1/2$ then $\mathcal{F}_{K}<0$ so
$\mathcal{F}_{K}\notin [0,1]$ which is a contradiction. Also, the utility function $\bm{\mathsf{U}}(\mathcal{F}_{K},p)$ is zero at $p=1/2$ since $\bm{\mathsf{U}}(\mathcal{F}_{K},\tfrac{1}{2})=log(2)-\mathscr{H}(\tfrac{1}{2},\tfrac{1}{2})=\log(2)-\log(2)=0$ and so there will be zero net growth.
\end{cor}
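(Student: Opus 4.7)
The plan is to reduce both claims to direct substitutions into results already established, namely the explicit formula $\mathcal{F}_K = p - q = 2p-1$ from Theorem 3.1 and the entropic representation $\bm{\mathsf{U}}(\mathcal{F}_K,p) = \log 2 - \mathscr{H}(p,q)$ from Lemma 3.3. Since the utility function $\bm{\mathsf{U}}(\mathcal{F},p)$ is strictly concave (its second derivative is negative, as already computed in the proof of Theorem 3.1), it has a unique maximiser, and the admissibility of that maximiser as a \emph{fraction of wealth} forces the sign restriction on $p$. The statement is really two small observations, so I expect the proof to be essentially a one-line verification in each case, with the only delicate point being reconciliation of the absolute value in the statement of Theorem 3.1 with the signed derivation.

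For the first claim I would argue by contradiction. Suppose $p < 1/2$. From the critical-point computation in Theorem 3.1, the stationary point is $\mathcal{F}_K = p - q = 2p - 1$, which is then strictly negative. A negative fraction cannot represent a wagered share of the current wealth $\mathscr{W}(I-1)$ in the setup of Lemma 3.1, so $\mathcal{F}_K \notin [0,1]$, contradicting the assumption that the concave utility curve attains its peak inside the admissible interval. Hence $p \geq 1/2$, and in the strict case $p > 1/2$ gives $\mathcal{F}_K \in (0,1)$. The absolute-value convention of Theorem 3.1 is consistent with this, since for $p > 1/2$ we have $|p-q| = p - q$.

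For the second claim I would just substitute $p = q = 1/2$ into Lemma 3.3:
\begin{align}
\bm{\mathsf{U}}(\mathcal{F}_K, \tfrac{1}{2}) = \log 2 - \mathscr{H}(\tfrac{1}{2},\tfrac{1}{2}) = \log 2 - \log 2 = 0,
\end{align}
invoking Lemma 2.4 for the value $\mathscr{H}(1/2,1/2) = \log 2$. Since $\bm{\mathsf{U}}(\mathcal{F},p) = \bm{\mathsf{E}}[\log|\mathscr{W}(N)/\mathscr{W}(0)|^{1/N}]$ by Definition 3.1 is precisely the per-trial exponential growth rate of wealth, $\bm{\mathsf{U}}(\mathcal{F}_K, 1/2) = 0$ translates directly into zero net growth, closing the corollary.

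The main obstacle, such as it is, is purely cosmetic: making sure the reader sees why Theorem 3.1 with $|p-q|$ does not conflict with the strict sign analysis here, and flagging that at $p=1/2$ one has $\mathcal{F}_K = 0$, which is the degenerate case where the optimal strategy is not to play at all; the vanishing of $\bm{\mathsf{U}}$ is then consistent with the wealth process $\mathscr{W}(N) = \mathscr{W}(0)$ being constant. No further machinery is required.
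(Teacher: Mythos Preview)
Your proposal is correct and follows exactly the reasoning the paper itself embeds in the corollary statement: the contradiction via $\mathcal{F}_K=2p-1<0$ for $p<1/2$, and the direct substitution into the entropic formula $\bm{\mathsf{U}}(\mathcal{F}_K,p)=\log 2-\mathscr{H}(p,q)$ together with $\mathscr{H}(\tfrac12,\tfrac12)=\log 2$. The only cosmetic issue is that your internal cross-references are off by the shared theorem counter (what you call Theorem 3.1, Lemma 3.3, Lemma 2.4, and Definition 3.1 are in the paper Theorem 3.4, Lemma 3.5, Lemma 2.7, and Definition 3.2, respectively).
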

\begin{cor}
The utility function  vanishes at $\mathcal{F}=0$ and $\mathcal{F}=\mathcal{F}_{*}$. First $\mathbb{G}(0)=plog(1)+q\log(1)=0$. Then
\begin{align}
\bm{\mathsf{U}}(\mathcal{F}_{*},p)=\mathit{p}\log(1+{\mathcal{F}}_{*})+\mathit{q}\log(1-\mathcal{F}_{*})=0
\end{align}
so that
\begin{align}
\frac{\log(1+\mathcal{F}_{*})}{\log(1-\mathcal{F}_{*})}=-\frac{\mathit{q}}{\mathit{p}}=\frac{(\mathit{p}-1)}{\mathit{p}}
\end{align}
This can be solved numerically for $\mathit{p}>\frac{1}{2}$.
\end{cor}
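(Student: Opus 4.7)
The plan is to verify the two vanishing statements and then derive the displayed transcendental equation from the second of them, and finally to justify that such an $\mathcal{F}_{*}\in(0,1]$ genuinely exists.

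First I would handle $\mathcal{F}=0$ by direct substitution into the closed form of Lemma that preceded the corollary, namely $\bm{\mathsf{U}}(\mathcal{F},p)=p\log(1+\mathcal{F})+q\log(1-\mathcal{F})$. At $\mathcal{F}=0$ both arguments of the logarithm collapse to $1$, so each term vanishes and $\bm{\mathsf{U}}(0,p)=0$, which is exactly the first assertion. This step is a one-line check and requires no further machinery.

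Next, for the existence of a second zero $\mathcal{F}_{*}\in(0,1]$, I would use a standard Intermediate Value argument that leverages structural properties already established. The function $\mathcal{F}\mapsto\bm{\mathsf{U}}(\mathcal{F},p)$ is continuous on $[0,1)$, concave (the second derivative was shown to be strictly negative in the proof of the theorem giving $\mathcal{F}_{K}$), vanishes at $0$, is strictly positive at the interior maximum $\mathcal{F}_{K}=p-q\in(0,1)$, and tends to $-\infty$ as $\mathcal{F}\to 1^{-}$ because $q\log(1-\mathcal{F})\to-\infty$. Continuity and the sign change on $[\mathcal{F}_{K},1)$ then guarantee a unique zero $\mathcal{F}_{*}\in(\mathcal{F}_{K},1)$ by IVT, with uniqueness coming from strict concavity (a concave function has at most two zeros on an interval, and $0$ is already one of them).

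The final step is to convert the equation $\bm{\mathsf{U}}(\mathcal{F}_{*},p)=0$ into the displayed ratio. Writing $p\log(1+\mathcal{F}_{*})=-q\log(1-\mathcal{F}_{*})$ and dividing by $p\log(1-\mathcal{F}_{*})$ (which is nonzero on $(0,1)$ since $\mathcal{F}_{*}\neq 0$) yields
\begin{equation*}
\frac{\log(1+\mathcal{F}_{*})}{\log(1-\mathcal{F}_{*})}=-\frac{q}{p}=\frac{p-1}{p},
\end{equation*}
using $q=1-p$. This is the stated relation, and because it is transcendental in $\mathcal{F}_{*}$ it must indeed be solved numerically for any given $p>\tfrac{1}{2}$.

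The only nontrivial obstacle is the existence argument: the manipulation of the equation is purely algebraic, and the boundary check at $0$ is immediate, so the crux is verifying that the concave profile of $\bm{\mathsf{U}}(\cdot,p)$, which rises from $0$ and diverges to $-\infty$ at the endpoint $\mathcal{F}=1$, forces a second root strictly beyond $\mathcal{F}_{K}$. This is the one place where one must invoke earlier results (the closed-form expression and the negativity of the second derivative) rather than compute afresh.
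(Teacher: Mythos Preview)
Your proof is correct, and the algebraic content matches the paper exactly: direct substitution at $\mathcal{F}=0$, followed by rearranging $p\log(1+\mathcal{F}_{*})+q\log(1-\mathcal{F}_{*})=0$ into the displayed ratio. The one genuine addition in your proposal is the Intermediate Value/concavity argument establishing that a root $\mathcal{F}_{*}\in(\mathcal{F}_{K},1)$ actually exists and is unique. The paper does not supply this; it effectively treats $\mathcal{F}_{*}$ as defined by the vanishing condition and defers the structural picture to the subsequent Proposition and the plotted graph. Your IVT step fills that gap cleanly using facts already on hand (continuity, strict concavity from the second-derivative computation, positivity at $\mathcal{F}_{K}$, divergence to $-\infty$ at $\mathcal{F}\to1^{-}$), so it is a strict strengthening of the paper's argument rather than a different route.
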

\begin{cor}
The utility function tends to negative infinity as $\mathcal{F} \rightarrow 1$ so that $\bm{\mathsf{U}}(1,p)=-\infty$.
\end{cor}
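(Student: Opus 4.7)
The plan is to substitute directly into the closed form $\bm{\mathsf{U}}(\mathcal{F},p)=\mathit{p}\log(1+\mathcal{F})+\mathit{q}\log(1-\mathcal{F})$ established in Lemma 3.2, and analyse the one-sided limit as $\mathcal{F}\rightarrow 1^{-}$ by treating the two summands independently. The first summand $\mathit{p}\log(1+\mathcal{F})$ is continuous on $[0,1]$ and converges to the finite value $\mathit{p}\log(2)$. The second summand $\mathit{q}\log(1-\mathcal{F})$ is where the divergence lives: since $\log(1-\mathcal{F})\rightarrow -\infty$ as $\mathcal{F}\rightarrow 1^{-}$, and since the setup implicitly assumes a genuinely stochastic game so that $\mathit{q}=1-\mathit{p}>0$, the product tends to $-\infty$. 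Adding a bounded quantity to a quantity that diverges to $-\infty$ yields $-\infty$, so $\bm{\mathsf{U}}(\mathcal{F},p)\rightarrow -\infty$, which I would interpret as the shorthand $\bm{\mathsf{U}}(1,p)=-\infty$.

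There is essentially no technical obstacle here; the only subtlety requiring a brief remark is the degenerate case $\mathit{p}=1$, $\mathit{q}=0$ (a deterministic double-headed coin), where the $\mathit{q}\log(1-\mathcal{F})$ term is identically zero and the claimed divergence would fail — but this case is excluded from the hypotheses, since a truly stochastic binary game forces $\mathit{q}>0$. As a sanity check, the result is consistent with the underlying gambling interpretation: wagering the entire bankroll ($\mathcal{F}=1$) means a single losing trial gives $\mathscr{W}(N)=0$, hence $\log|\mathscr{W}(N)/\mathscr{W}(0)|=-\infty$ on the event that at least one $\mathscr{Z}(I)=-1$ occurs, an event of probability $1-\mathit{p}^{N}>0$. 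The expectation of the logarithm therefore necessarily diverges to $-\infty$, in complete agreement with the formal limit obtained from the closed-form expression.
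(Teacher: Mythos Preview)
Your argument is correct and is exactly the intended one: the paper states this corollary without proof, treating it as immediate from the closed form $\bm{\mathsf{U}}(\mathcal{F},p)=\mathit{p}\log(1+\mathcal{F})+\mathit{q}\log(1-\mathcal{F})$ of Lemma~3.3, and your analysis of the two summands (with the caveat $\mathit{q}>0$) supplies precisely those details. Nothing further is needed.
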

\begin{prop}
The GFC $\bm{\mathsf{U}}(\mathcal{F},p)$ is defined for the interval $[0,1]$. so $\mathcal{F}\in[0,1]$ and $\mathcal{F}_{K}=(p-q)$ for $\mathit{p}>q$ with $\mathit{p}>\tfrac{1}{2}$.
The unit interval can be partitioned as
\begin{align}
[0,1]=[0,\mathcal{F}_{K}]\bigcup(\mathcal{F}_{K},\mathcal{F}_{*}]\bigcup[\mathcal{F}_{*}]\bigcup(\mathcal{F}_{*},1]= \mathbb{I}\bigcup\mathbb{J}\bigcup\mathbb{K}
\end{align}
Then:
\begin{enumerate}
\item The utility function $\bm{\mathsf{U}}(\mathcal{F},p)>0$ for $\mathcal{F}\in[0,\mathcal{F}_{K}]\bigcup(\mathcal{F}_{K},\mathcal{F}_{*}]=\mathbb{I}\bigcup \mathbb{J}$.
\item $\bm{\mathsf{U}}(\mathcal{F},p)<0 $ for $\mathcal{F}\in(\mathcal{F}_{*},1]$.
\item $\bm{\mathsf{U}}(\mathcal{F}_{*},p)=0$ and $\bm{\mathsf{U}}(0,p)=0$
\item $\bm{\mathsf{U}}(\mathcal{F},p)$ is maximised or peaked at $\mathcal{F}=\mathcal{F}_{K}=(\mathit{p}-\mathit{q})$ so that $[d\bm{\mathsf{U}}(\mathcal{F})/d\mathcal{F}]_{\mathcal{F}=\mathcal{F}_{K}}=0$.
\item $\bm{\mathsf{U}}(1,p)=-\infty$
\end{enumerate}
Hence
\begin{align}
[0,1]=\underbrace{[0,\mathcal{F}_{K}]\bigcup(\mathcal{F}_{K},\mathcal{F}_{*})}_{\bm{\mathsf{U}}(\mathcal{F},p)\ge 0}\bigcup\underbrace{[\mathcal{F}_{*}]}_{\mathsf{U}(\mathcal{F}_{*},p)=0}
\bigcup\underbrace{(\mathcal{F}_{*},1]}_{\bm{\mathsf{U}}(\mathcal{F},p)<0}= \mathbb{I}\bigcup\mathbb{J}\bigcup\mathbb{K}
\end{align}
\end{prop}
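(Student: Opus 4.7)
The plan is to combine three ingredients already at hand: (i) the endpoint values of $\bm{\mathsf{U}}(\cdot,p)$ on $[0,1]$, computed directly; (ii) the concavity and critical-point analysis already carried out in Theorem 3.3 together with the identity $\bm{\mathsf{U}}(\mathcal{F}_K,p)=\log 2 - \mathscr{H}(p,q)$ from Lemma 3.5; and (iii) the intermediate value theorem applied to the strictly decreasing branch of $\bm{\mathsf{U}}(\cdot,p)$ past its maximum. Items (3), (4) and (5) then fall out essentially by inspection, and the sign statements (1) and (2) follow from strict monotonicity on each side of the peak.

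First I would dispose of items (3), (4) and (5). Item (4) is exactly Theorem 3.3: $\mathcal{F}_K=p-q$ is the unique interior critical point of $\bm{\mathsf{U}}(\cdot,p)$, and since the second derivative $-p/(1+\mathcal{F})^2-q/(1-\mathcal{F})^2$ is strictly negative on $(-1,1)$, the function is strictly concave and $\mathcal{F}_K$ is its unique global maximizer. Item (3) follows from the elementary computation $\bm{\mathsf{U}}(0,p)=p\log 1+q\log 1=0$, while item (5) follows from $\lim_{\mathcal{F}\to 1^-}\log(1-\mathcal{F})=-\infty$ and $q>0$, giving $\bm{\mathsf{U}}(1,p)=-\infty$ in the limit sense.

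The substantive step is to produce $\mathcal{F}_*$ and locate it in $(\mathcal{F}_K,1)$. By Lemma 3.5 and Lemma 2.5, for $p>\tfrac{1}{2}$ one has $\mathscr{H}(p,q)<\log 2$, so $\bm{\mathsf{U}}(\mathcal{F}_K,p)=\log 2-\mathscr{H}(p,q)>0$. Strict concavity together with the fact that $\mathcal{F}_K$ is the unique maximizer gives strict monotonic decrease of $\bm{\mathsf{U}}(\cdot,p)$ on $[\mathcal{F}_K,1)$. Since the values at the endpoints of this interval are strictly positive and $-\infty$ respectively, continuity and the intermediate value theorem yield a unique $\mathcal{F}_*\in(\mathcal{F}_K,1)$ with $\bm{\mathsf{U}}(\mathcal{F}_*,p)=0$, which also finishes the zero statement in item (3). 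The transcendental equation characterizing $\mathcal{F}_*$ is exactly the one displayed in Corollary 3.7.

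The sign statements (1) and (2) then follow purely from the shape of the curve. On $(0,\mathcal{F}_K]$ the function is strictly increasing from $0$ to a strictly positive maximum, so $\bm{\mathsf{U}}(\mathcal{F},p)>0$ for $\mathcal{F}\in(0,\mathcal{F}_K]$; on $(\mathcal{F}_K,\mathcal{F}_*)$ it decreases strictly from the positive maximum to $0$, so remains strictly positive; on $(\mathcal{F}_*,1)$ it continues strictly decreasing through zero toward $-\infty$, so is strictly negative. The only mild subtlety to flag is that the endpoint $\mathcal{F}=0$ gives $\bm{\mathsf{U}}=0$ and the endpoint $\mathcal{F}=\mathcal{F}_*$ gives $\bm{\mathsf{U}}=0$, so the inequality in (1) is strict on the open interval $(0,\mathcal{F}_*)$ and must be read as $\ge 0$ on the closure (consistent with the final displayed partition of $[0,1]$). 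The main obstacle, such as it is, is really only the existence statement for $\mathcal{F}_*$, but since it is reduced to IVT on a strictly monotone continuous segment, the entire argument is an elementary consequence of the concavity and boundary behavior already established.
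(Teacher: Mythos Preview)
Your argument is correct. In fact the paper does not supply a proof for this proposition at all: it is stated as a summary of the preceding results (the Kelly critical-point theorem, the Shannon-entropy identity, and the two corollaries on the zeros and the divergence at $\mathcal{F}=1$) and then illustrated by the plot in Figure~1. Your write-up therefore goes beyond the paper in one respect: you give a clean existence-and-uniqueness argument for $\mathcal{F}_*$ via strict concavity plus the intermediate value theorem on the decreasing branch, whereas the paper only characterises $\mathcal{F}_*$ implicitly through the transcendental equation in the corollary and remarks that it can be solved numerically. You also correctly flag the endpoint issue that the statement's item~(1) should be read as $\ge 0$ on the closed interval, consistent with the final displayed partition. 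The only cosmetic point is that your cross-references are off by one relative to the paper's shared theorem counter (the Kelly theorem is 3.4, not 3.3, and the entropy-maximisation lemma is 2.7), but this does not affect the mathematics.
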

The typical form of the utility function $\bm{\mathsf{U}}(\mathcal{F},p)$ is now plotted in Fig.1 for $\mathit{p}=0.60,\mathit{q}=0.40$ and it can be seen that\newline
\begin{figure}
\begin{center}
\hspace{-1cm}
\includegraphics[height=3.0in,width=5.5in]{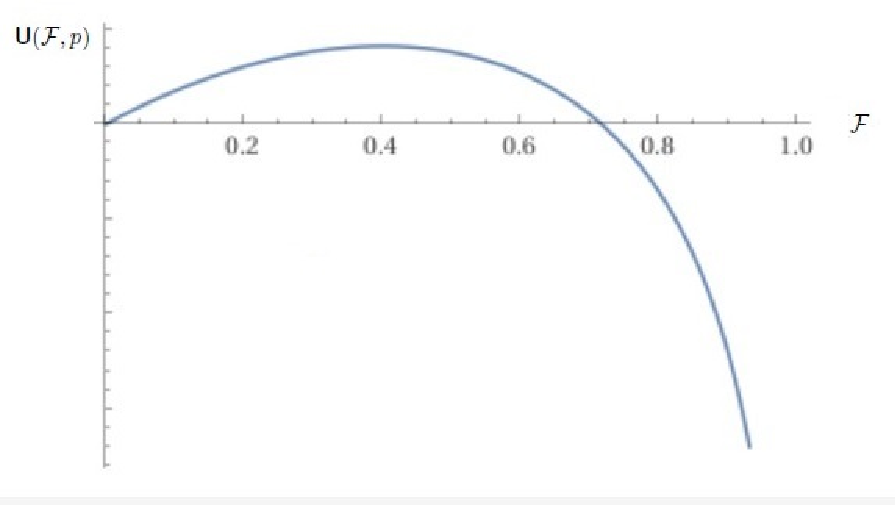}
\caption{Utility function $\bm{\mathsf{U}}(\mathcal{F},p)$ versus $\mathcal{F}$ with a peak at Kelly fraction $\mathcal{F}_{K}=0.4$}
\end{center}
\end{figure}
These values are chosen to illustrate the full form of the plot from $\mathcal{F}=0$ to $\mathcal{F}=1$.
\begin{rem}
\begin{enumerate}
The following observations can be made \textbf{[2]}:
\item If $\bm{\mathsf{U}}(\mathcal{F},p)>0$ then $\lim_{N\rightarrow\infty}\mathscr{W}(N)=\infty$  a.s, and for all $M>0$ $\bm{\mathsf{P}}(\inf \mathscr{W}(N)>M]=1$. The growth coefficient is positive and so the wealth grows on average. The fastest rate of growth which maximises $\bm{\mathsf{U}}(F)$ occurs when $\mathcal{F}=\mathcal{F}_{K}=\mathit{p}-\mathit{q}$.
\item If $\bm{\mathsf{U}}(\mathcal{F},p)<0$ then the utility function is negative and wealth is gradually depleted with increasing N, even with a favourable edge $\mathit{p}>\mathit{q}$. Then
$\lim_{N\rightarrow\infty}\mathscr{W}(N)=0$ a.d and for all very small $\epsilon>0$ one has $\bm{\mathsf{P}}(\lim_{N\rightarrow\infty}\mathscr{W}(N)\le \epsilon]=1$
\item If $\mathcal{F}=\mathcal{F}_{*}$ then $\bm{\mathsf{U}}(\mathcal{F}_{*},p)=0$ so that $\mathscr{W}(N)$ oscillates randomly between zero and infinity such that
\begin{align}
&\lim_{N\rightarrow\infty}\sup \mathscr{W}(N)=\infty, ~a.s\nonumber\\&
\lim_{N\rightarrow\infty}\sup \mathscr{W}(N)=0~a.s\nonumber
\end{align}
\item If $\mathcal{F}=1$ then $\bm{\mathsf{U}}(1)=-\infty$ and the ruin probability is absolute for any $p\in[\frac{1}{2},1)$. Any initial $\mathscr{W}(0)$ will be wiped out with increasing N.
In consecutive bets are made, and at each N, the amount $\mathscr{W}(N-1)$ is wagered then the probability of N wins with no losses is
\begin{align}
\bm{\mathsf{P}}(no~losses~in~N~bets)=\mathit{p}^{N}
\end{align}
The probability that the entire betted bankroll/money is lost at the $N^{th}$ bet is then
\begin{align}
\bm{\mathsf{P}}(\mathscr{W}(N)=0)=1-\mathit{p}^{N}
\end{align}
Hence, the ruin probability is absolute or unity for large N even for a highly favourable game with $p\gg\frac{1}{2}$ since $p<1$ and $\lim_{N\rightarrow 0} x^{N}=0$ for $0<x<1$ so
\begin{align}
\lim_{N\rightarrow\infty}\bm{\mathsf{P}}[\mathscr{W}(N)=0]=\lim_{N\rightarrow\infty}[1-\mathit{p}^{N}]=1
\end{align}
\item However if $\mathcal{F}=1$ and $p=1$ (e.g, a double-headed coin) then there is zero probability of ruin and the game is fully deterministic with no randomness or uncertainty.
Each outcome is then $\mathscr{Z}=1$.
\begin{align}
\lim_{N\rightarrow\infty}\bm{\mathsf{P}}[\mathscr{W}(N)=0]=\lim_{N\rightarrow\infty}[1-p^{N}]
=\lim_{N\rightarrow\infty}[1- 1^{N}]=0
\end{align}
The wealth $\mathscr{W}(N)$ then doubles with each bet and grows exponentially since a win is now 100 percent certain at each bet. The evolution of the liquidity or wealth is then
\begin{align}
&\mathscr{W}(N)=\mathscr{W}(0)\prod_{I=1}^{N}(1+\mathcal{F}\mathscr{Z}(I))=\mathscr{W}(0)(1+\mathcal{F})^{N}=\mathscr{W}(0) 2^{N}\nonumber\\&\equiv \mathscr{W}(0)\exp\log 2^{N}=\mathscr{W}(0)\exp(N\log(2))\sim \mathscr{W}(0)\exp(0.693 N)
\end{align}
so that the liquidity then grows exponentially.
\end{enumerate}
\end{rem}
The larger is $p>\frac{1}{2}$ then the larger is the growth factor or utility  $\bm{\mathsf{U}}(\mathcal{F},p)$ for all $\mathcal{F}\in [0,1]$.
\begin{lem}
Let $\bm{\mathsf{U}}(\mathcal{F},\mathit{p})$ be the utility function for $p>\frac{1}{2}$ and let $\bm{\mathsf{U}}(\mathcal{F},\bar{p})$ the function
for $\bar{p}<{p}$, with $\mathcal{F}\in[0,1]$. Then
\begin{align}
\bm{\mathsf{U}}(\mathcal{F},p) > \bm{\mathsf{U}}(\mathcal{F},\widehat{p}),~~{p}> \widehat{p}>\frac{1}{2}
\end{align}
\end{lem}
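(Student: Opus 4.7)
The plan is to exploit the explicit closed form $\bm{\mathsf{U}}(\mathcal{F},p)=p\log(1+\mathcal{F})+q\log(1-\mathcal{F})=p\log(1+\mathcal{F})+(1-p)\log(1-\mathcal{F})$ established in Lemma 3.3, and simply compare $\bm{\mathsf{U}}(\mathcal{F},p)$ with $\bm{\mathsf{U}}(\mathcal{F},\widehat{p})$ by direct subtraction. Writing $\widehat{q}=1-\widehat{p}$, the four terms in the difference should collect into
\begin{align*}
\bm{\mathsf{U}}(\mathcal{F},p)-\bm{\mathsf{U}}(\mathcal{F},\widehat{p})
&=(p-\widehat{p})\log(1+\mathcal{F})+\bigl((1-p)-(1-\widehat{p})\bigr)\log(1-\mathcal{F})\\
&=(p-\widehat{p})\bigl[\log(1+\mathcal{F})-\log(1-\mathcal{F})\bigr]
=(p-\widehat{p})\log\!\left(\frac{1+\mathcal{F}}{1-\mathcal{F}}\right).
\end{align*}

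After this factorisation the rest is a sign check. By hypothesis $p>\widehat{p}$, so the first factor is strictly positive. For the second factor, whenever $\mathcal{F}\in(0,1)$ one has $(1+\mathcal{F})/(1-\mathcal{F})>1$ and hence $\log((1+\mathcal{F})/(1-\mathcal{F}))>0$. The product is therefore strictly positive on the open interval, which yields the claimed inequality $\bm{\mathsf{U}}(\mathcal{F},p)>\bm{\mathsf{U}}(\mathcal{F},\widehat{p})$ precisely where it can hold strictly. Note that the condition $p,\widehat{p}>\tfrac{1}{2}$ is not actually needed for the inequality itself; it is only needed so that both $\mathcal{F}_{K}=p-q$ and $\widehat{\mathcal{F}}_{K}=\widehat{p}-\widehat{q}$ lie in $(0,1)$, placing the comparison in the submartingale regime described in Proposition 3.9.

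The only real subtlety worth flagging is the behaviour at the endpoints of $[0,1]$. At $\mathcal{F}=0$ one has $\bm{\mathsf{U}}(0,p)=\bm{\mathsf{U}}(0,\widehat{p})=0$ by Corollary 3.7, and at $\mathcal{F}=1$ both quantities diverge to $-\infty$ by Corollary 3.8, so strict inequality necessarily fails at the boundary; the conclusion as stated should therefore be understood on the open interval $(0,1)$. An equivalent and perhaps more conceptual route would be to differentiate $\bm{\mathsf{U}}$ with respect to $p$ at fixed $\mathcal{F}$, obtaining $\partial_{p}\bm{\mathsf{U}}(\mathcal{F},p)=\log((1+\mathcal{F})/(1-\mathcal{F}))>0$ on $(0,1)$, so $p\mapsto\bm{\mathsf{U}}(\mathcal{F},p)$ is strictly monotone increasing and integration from $\widehat{p}$ to $p$ delivers the same conclusion. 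I would present the direct subtraction version, since it needs no calculus and the factor $p-\widehat{p}$ appearing on the right-hand side is itself a transparent restatement of the claimed monotonicity in $p$.
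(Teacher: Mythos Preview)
Your proof is correct and follows essentially the same route as the paper: both compute the difference $\bm{\mathsf{U}}(\mathcal{F},p)-\bm{\mathsf{U}}(\mathcal{F},\widehat{p})$ directly from the closed form in Lemma~3.3 and then do a sign analysis. The only cosmetic difference is that the paper leaves the difference as the sum of two separately positive terms $(p-\widehat{p})\log(1+\mathcal{F})$ and $(\widehat{p}-p)\log(1-\mathcal{F})$, whereas you factor one step further into the product $(p-\widehat{p})\log\!\bigl((1+\mathcal{F})/(1-\mathcal{F})\bigr)$; your remarks on the endpoints and on the irrelevance of the hypothesis $p,\widehat{p}>\tfrac{1}{2}$ are accurate refinements not made explicit in the paper.
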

\begin{proof}
Define the difference or separation as
\begin{align}
\Delta\bm{\mathsf{U}}(\mathcal{F},p)=\bm{\mathsf{U}}(\mathcal{F},{p})-\bm{\mathsf{U}}({\mathcal{F}},\widehat{p})
\end{align}
where
\begin{align}
&\bm{\mathsf{U}}(\mathcal{F},p)=\mathit{p}\log(1+\mathcal{F})+(1-\mathit{p})\log(1-\mathcal{F})\\&
\bm{\mathsf{U}}(\mathcal{F},\widehat{p})=\widehat{p}\log(1+\mathcal{F})+(1-\widehat{p})\log(1-\mathcal{F})
\end{align}
Then
\begin{align}
&\bm{\mathsf{U}}(\mathcal{F},p)=(\mathit{p}-\widehat{p})\log(1+\mathcal{F})+(\widehat{p}-{p})\log(1-\mathcal{F})
\end{align}
Now analyse the signs of the terms. Since $\mathit{p}>\widehat{p}$ then $(\mathit{p}-\widehat{p})>0$. For all $\mathcal{F}\in[0,1], \log(1+\mathcal{F})\ge 0$.
However $(\widehat{p}-\mathit{p})<0$ for $\mathit{p}>\widehat{p}$ and $\log(1-\mathcal{F})\le 0$ for $\mathcal{F}\in[0,1]$. Hence
\begin{align}
&\Delta\bm{\mathsf{U}}(\mathcal{F})=\underbrace{(\mathit{p}-\widehat{p})}_{+VE}\underbrace{\log(1+\mathcal{F})}_{+VE}+\underbrace{(\widehat{p}-\mathit{p})}_{-VE}\underbrace{\log(1-\mathcal{F})}_{-VE}\nonumber\\&
=\underbrace{(\mathit{p}-\widehat{p})\log(1+\mathcal{F})}_{+VE}+\underbrace{(\widehat{p}-\mathit{p})\log(1-\mathcal{F})}_{+VE}>0
\end{align}
Therefore $\Delta\bm{\mathsf{U}}(\mathcal{F})>0$ and so $\bm{\mathsf{U}}(\mathcal{F};\mathit{p})>\bm{\mathsf{U}}(\mathcal{F};\widehat{p})$. Note also that $\bm{\mathsf{U}}(1;\widehat{p})=
\bm{\mathsf{U}}(1;{p})=-\infty$.
\end{proof}
This is to be expected since a larger value of $\mathit{p}>\frac{1}{2}$ leads to more favourable outcome on average and so the growth factor or utility function
will be larger for all values of $\mathcal{F}$. The advantage of a higher $\mathit{p}$ also grows with increasing $\mathcal{F}$. This is illustrated in Figure 2. Hence, the peaks are larger for larger $\mathit{p}$ and the Kelly fraction is larger. Here we can see that $\bm{\mathsf{U}}(\mathcal{F},\mathit{p})>\bm{\mathsf{U}}(\mathcal{F},\widehat{p}^{*})>\bm{\mathsf{U}}(\mathcal{F},\widehat{\widehat{p}})$ where $\mathit{p}=0.52,\widehat{p}=0.515,\widehat{\widehat{p}}=0.51.$ and all curves will converge to $-\infty$ as $\mathcal{F}\rightarrow 1$.
\begin{figure}
\begin{center}
\includegraphics[height=2.5in,width=6.5in]{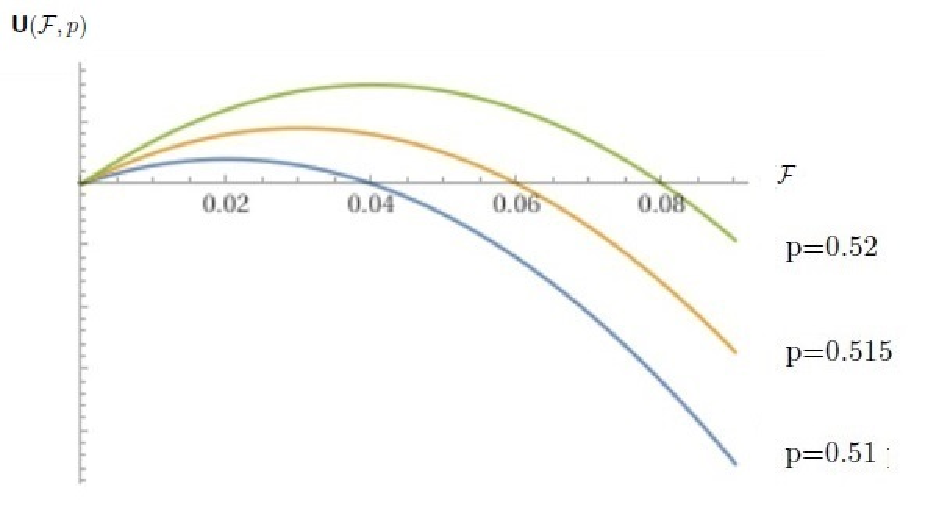}
\caption{Utility function curves $\bm{\mathsf{U}}(\mathcal{F},p)$ for p=0.52, p=0.515, p=0.51}
\end{center}
\end{figure}
Note that the graphs are not exactly symmetric about $\mathcal{F}_{K}$ but nonetheless are still very close to it. It may seem that $\mathcal{F}_{*}=2\mathcal{F}$ but this not the actually the case. However, for small $\mathcal{F}$, and $\mathcal{F}$ is usually small since $\mathit{p}$ is usually not much greater than $0.5$, one has the robust approximation $\mathcal{F}_{*}\sim 2\mathcal{F}$.
\begin{lem}
For small $\mathcal{F}$, one has the approximations
\begin{align}
\mathcal{F}\sim 2 \mathcal{F}=2(\mathit{p}-\mathit{q})=2(2\mathit{p}-1)
\end{align}
and
\begin{align}
\mathcal{F}_{*}\sim 2 \mathcal{F}+\epsilon=2 \mathcal{F}_{K}+ \frac{\mathcal{F}_{K}^{2}}{\frac{3}{8}-\mathcal{F}_{K}^{2}}
\end{align}
\begin{proof}
\begin{align}
\bm{\mathsf{U}}(\mathcal{F}_{*},p)=\mathit{p}\log(1+\mathcal{F}_{c})+\mathit{q}\log(1-\mathcal{F}_{*})=0
\end{align}
Expanding the logs via Taylor series
\begin{align}
&\log(1+\mathcal{F}_{*})\sim \mathcal{F}_{*}-\frac{1}{2}\mathcal{F}_{*}^{2}+\frac{1}{3}\mathcal{F}_{*}^{3}\\&
\log(1-\mathcal{F}_{*})\sim -\mathcal{F}_{*}-\frac{1}{2}\mathcal{F}_{*}^{2}-\frac{1}{3}\mathcal{F}_{*}^{3}
\end{align}
Then
\begin{align}
&\bm{\mathsf{U}}(\mathcal{F}_{*},p)=\mathit{p}(\mathcal{F}_{*}-\frac{1}{2}\mathcal{F}_{*}^{2}+\frac{1}{3}\mathcal{F}_{*}^{3})+\mathit{q}(-_{C}-\frac{1}{2}\mathcal{F}_{*}^{2}-\frac{1}{3}\mathcal{F}_{*}^{3})\nonumber\\&
=\mathit{p}\mathcal{F}_{*}-\frac{1}{2}\mathit{p}\mathcal{F}_{*}^{2}+\frac{1}{3}\mathit{p}\mathcal{F}_{*}-\mathit{q}\mathcal{F}_{*}-\frac{1}{2}\mathit{q}\mathcal{F}_{*}^{2}-\frac{1}{3}\mathit{q}\mathcal{F}_{*}^{3}\nonumber\\&
=\mathcal{F}_{*}(\mathit{p}-\mathit{q})-\frac{1}{2}\mathcal{F}_{*}^{2}(\mathit{p}+\mathit{q})+\frac{1}{3}\mathcal{F}_{*}^{3}(\mathit{p}-\mathit{q})\nonumber\\&
=\mathcal{F}_{*}(2\mathit{p}-1)-\frac{1}{2}\mathcal{F}_{*}^{2}+\frac{1}{3}\mathcal{F}_{*}^{3}(2\mathit{p}-1)\nonumber\\&
\end{align}
Now dropping the cubic term, which is very small, we have the quadratic approximation
\begin{align}
\mathcal{F}_{*}(2\mathit{p}-1)-\frac{1}{2}\mathcal{F}_{*}^{2}\sim 0
\end{align}
or
\begin{align}
(2\mathit{p}-1)-\frac{1}{2}\mathcal{F}_{*}\sim 0
\end{align}
which gives $\mathcal{F}_{*}\sim 2(2\mathit{p}-1)=2\mathcal{F}_{K}$. For the error term $\epsilon$, if $\mathcal{F}_{*}=2\mathcal{F}_{K}+\epsilon$ then
\begin{align}
(2\mathit{p}-1)-\frac{1}{2}\mathcal{F}_{*}+\frac{1}{3}\mathcal{F}_{*}^{2}(2\mathit{p}-1)\sim 0
\end{align}
becomes
\begin{align}
(2\mathit{p}-1)-\frac{1}{2}(2\mathcal{F}_{K}+\epsilon)+\frac{1}{3}(2\mathcal{F}_{K}+\epsilon)^{2}(2\mathit{p}-1)\sim 0
\end{align}
Now $\mathcal{F}_{K}=(2\mathit{p}-1)$ so that
\begin{align}
\mathcal{F}_{K}-\frac{1}{2}(2\mathcal{F}_{K}+\epsilon)+\frac{1}{3}(2\mathcal{F}_{K}+\epsilon)^{2}\mathcal{F}_{K}=-\frac{1}{2}\epsilon+\frac{4}{3}\mathcal{F}_{K}^{3}+
\frac{4}{3}\mathcal{F}_{K}^{2}\epsilon+\frac{1}{3}\mathcal{F}_{K}\epsilon^{2}
\end{align}
Keeping terms first order in $\epsilon$
\begin{align}
-\frac{1}{2}\epsilon+\frac{4}{3}\mathcal{F}_{K}^{3}+\frac{4}{3}\mathcal{F}_{K}^{2}\epsilon \sim 0
\end{align}
or
\begin{align}
\epsilon(-\frac{1}{2}+\frac{4}{3}\mathcal{F}_{K}^{2})=-\frac{4}{3}\mathcal{F}_{K}^{3}
\end{align}
Now solving for $\epsilon$
\begin{align}
\epsilon\sim \frac{\frac{4}{3}\mathcal{F}_{K}^{3}}{\frac{1}{2}-\frac{4}{3}\mathcal{F}_{K}^{2}}=\frac{\mathcal{F}_{K}^{3}}{\frac{3}{8}-\mathcal{F}_{K}^{2}}
\end{align}
\end{proof}
For typical values of $\mathcal{F}_{K}$, the term $\epsilon$ is very small. For example, for $\mathcal{F}_{K}=0.04 $, then $\epsilon$ is approximately
$0.0001714$ and can be ignored.
\end{lem}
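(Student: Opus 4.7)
The plan is to start from the defining equation for $\mathcal{F}_*$, namely $\bm{\mathsf{U}}(\mathcal{F}_*,p) = p\log(1+\mathcal{F}_*) + q\log(1-\mathcal{F}_*) = 0$ (Corollary 3.2), and to exploit the smallness of $\mathcal{F}_K = p-q$, which is the regime of interest since $p$ is typically only marginally above $1/2$ in applications. A small $\mathcal{F}_K$ forces $\mathcal{F}_*$ to also be small, which licenses a Taylor expansion of both logarithms about zero and converts the transcendental condition into a polynomial one.

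First I would expand $\log(1+\mathcal{F}_*)$ and $\log(1-\mathcal{F}_*)$ through cubic order in $\mathcal{F}_*$. Substituting into the zero condition and grouping by power of $\mathcal{F}_*$, the even powers carry coefficient $p+q = 1$ while the odd powers carry coefficient $p-q = \mathcal{F}_K$. This produces the cubic polynomial equation $\mathcal{F}_K \mathcal{F}_* - \tfrac{1}{2}\mathcal{F}_*^2 + \tfrac{1}{3}\mathcal{F}_K \mathcal{F}_*^3 = 0$, which (after discarding the trivial root $\mathcal{F}_* = 0$) must be solved to the required accuracy.

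To extract the leading-order approximation I would drop the cubic contribution and balance the linear and quadratic terms, giving immediately $\mathcal{F}_* \approx 2\mathcal{F}_K = 2(2p-1)$. For the next correction, write $\mathcal{F}_* = 2\mathcal{F}_K + \epsilon$ with $\epsilon$ assumed small compared with $\mathcal{F}_K$, substitute back into the full cubic equation, and retain only terms linear in $\epsilon$. The $O(1)$ pieces in $\epsilon$ cancel by construction (that is precisely the leading-order condition), leaving a linear equation of the form $\bigl(-\tfrac{1}{2} + \tfrac{4}{3}\mathcal{F}_K^2\bigr)\epsilon = -\tfrac{4}{3}\mathcal{F}_K^3$. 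Solving yields $\epsilon \sim \mathcal{F}_K^3 / \bigl(\tfrac{3}{8} - \mathcal{F}_K^2\bigr)$, which is the advertised correction.

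The main obstacle is not any single calculation but verifying the self-consistency of the perturbation scheme. One must check (a) that with $\mathcal{F}_* = O(\mathcal{F}_K)$ the cubic log term is indeed subleading relative to the terms retained in the leading balance; (b) that the $\epsilon$ so determined is genuinely $O(\mathcal{F}_K^3)$, so that quadratic terms in $\epsilon$ are safely dropped; and (c) that the next omitted term in the Taylor expansion (quartic in $\mathcal{F}_*$ with coefficient $-1/4$) contributes only at $O(\mathcal{F}_K^4)$ and therefore does not compete with $\epsilon$. Once these bookkeeping points are confirmed, the truncation at cubic order is justified and the stated expression for $\mathcal{F}_*$ follows.
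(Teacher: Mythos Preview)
Your approach is essentially identical to the paper's: Taylor-expand the logarithms in $\bm{\mathsf{U}}(\mathcal{F}_*,p)=0$ to cubic order, collect the odd powers with coefficient $p-q=\mathcal{F}_K$ and the even powers with coefficient $p+q=1$, truncate at the quadratic to obtain $\mathcal{F}_*\sim 2\mathcal{F}_K$, then set $\mathcal{F}_*=2\mathcal{F}_K+\epsilon$ in the cubic and linearise in $\epsilon$ to recover $\epsilon\sim \mathcal{F}_K^{3}/(\tfrac{3}{8}-\mathcal{F}_K^{2})$. The only addition is your explicit self-consistency bookkeeping in (a)--(c), which the paper omits but which does no harm.
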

\section{Submartingale and supermartingale regimes}
It is now formally proved that the process $\lbrace \mathscr{W}(I)\rbrace_{I=1}^{N}$ is submartingale when $\bm{\mathsf{U}}(\mathcal{F},p)>0$ and a supermartingale
when $\bm{\mathsf{U}}(p,\mathcal{F})<0$. Hence, on average, $\mathscr{W}(N)$ either grows or decays as N increases. For the submartingale case, one can apply Doob's submartinagle inequality and decomposition theorem \textbf{[20]}. A generic stochastic process $\lbrace \mathscr{S}(I)\rbrace_{I=1}^{N}$ or $\mathscr{S}(N)$ is a submartingale with respect to a filtration $\mathcal{F}_{N }$ if
$\bm{\mathsf{E}}[\mathscr{S}(N+1)|\mathfrak{F}_{N}]\ge \mathscr{S}(N)$, a martingale if $\bm{\mathsf{E}}[\mathscr{S}(N+1)|\mathfrak{F}_{N}]=\mathscr{S}(N)$ and a supermartingale if
$\bm{\mathsf{E}}[\mathscr{S}(N+1)|\mathfrak{F}_{N}]\le \mathscr{S}(N)$
\begin{thm}(\textbf{Submartingale and supermartingale regimes})\newline
The liquidity/wealth accumulated at the $N^{th}$ trial  is
\begin{align}
\mathscr{W}(N)=\mathscr{W}(0)\prod_{I=1}^{N}(1+\mathcal{F}\mathscr{Z}(N))=\mathscr{W}(N-1)(1+\mathcal{F}\mathscr{Z}(N))
\end{align}
with $\bm{\mathsf{P}}(\mathscr{N}=1)=p$ and $\bm{\mathsf{P}}(\mathscr{Z}(N)=-1)=q=1-p$ for all N. For all
\begin{align}
\mathcal{F}\in [0,1]=(0,\mathcal{F}_{K}]\bigcup(\mathcal{F}_{K},\mathcal{F}_{*})\bigcup[\mathcal{F}_{*}]\bigcup(\mathcal{F}_{*},1]
\end{align}
and for $p>1/2$ the utility function is
\begin{align}
\bm{\mathsf{U}}(\mathcal{F},p)=\bm{\mathsf{E}}\left[\frac{1}{N}\log \frac{\mathscr{W}(N)}{\mathscr{W}(0)}\right]=p\log(1+\mathcal{F})+(1-p)\log(1-\mathcal{F})
\end{align}
where $\bm{\mathsf{U}}(\mathcal{F}_{*},p)=0$ and with a critical point (maximum) at the Kelly fraction $\mathcal{F}=\mathcal{F}_{K}=2p-1$. Then:
\begin{enumerate}
\item If $\mathcal{F}\in(0,\mathcal{F}_{*})$ then $\bm{\mathsf{U}}(\mathcal{F},p)>0$ for all $p>1/2$ and $p\le 1$ and $\mathscr{W}(N)$ is a submartingale on $(0,\mathcal{F}_{*})$ and
\begin{align}
\bm{\mathsf{E}}[\mathscr{W}(N+1)|\mathfrak{F}_{N}]\ge \mathscr{W}(N)
\end{align}
\item If $\mathcal{F}\in (\mathcal{F}_{*},1)$ then $\bm{\mathsf{U}}(\mathcal{F},p)<0$ for all $p>1/2$ and $p\le 1$ and $\mathscr{W}(N)$ is a supermartingale on $(\mathcal{F}_{*},1]$ and
\begin{align}
\bm{\mathsf{E}}[\mathscr{W}(N+1)|\mathfrak{F}_{N}]\le \mathscr{W}(N)
\end{align}
\item If $\mathcal{F}\in (\mathcal{F}_{*},1)$ then $\bm{\mathsf{U}}(\mathcal{F},p)=0$ for all $p>1/2$ and $p\le 1$ and $\mathscr{W}(N)$ is a martingale and
\begin{align}
\bm{\mathsf{E}}[\mathscr{W}(N+1)|\mathfrak{F}_{N}]=\mathscr{W}(N)
\end{align}
\end{enumerate}
\end{thm}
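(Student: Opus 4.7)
The plan is to reduce each of the three assertions to a single one-step conditional expectation via the multiplicative update $\mathscr{W}(N+1) = \mathscr{W}(N)\bigl(1+\mathcal{F}\mathscr{Z}(N+1)\bigr)$, identify the resulting one-step multiplier with the growth coefficient $\bm{\mathsf{U}}(\mathcal{F},p)$, and then read off the submartingale/martingale/supermartingale trichotomy from the sign structure of $\bm{\mathsf{U}}$ on the partition $(0,\mathcal{F}_{*})\cup\{\mathcal{F}_{*}\}\cup(\mathcal{F}_{*},1)$ established in the preceding Proposition.

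First I would fix the natural filtration $\mathfrak{F}_{N} = \sigma(\mathscr{Z}(1),\ldots,\mathscr{Z}(N))$. Lemma 3.1 immediately shows that $\mathscr{W}(N)$ is $\mathfrak{F}_{N}$-measurable, and since $0\le 1+\mathcal{F}\mathscr{Z}(I)\le 2$ for $\mathcal{F}\in[0,1]$, the bound $\mathscr{W}(N)\le \mathscr{W}(0)\,2^{N}$ discharges integrability. Because $\mathscr{Z}(N+1)$ is independent of $\mathfrak{F}_{N}$ by the i.i.d.\ Bernoulli assumption, the $\mathfrak{F}_{N}$-measurable factor $\mathscr{W}(N)$ may be pulled outside the conditional expectation:
\begin{align*}
\bm{\mathsf{E}}\bigl[\mathscr{W}(N+1)\,\big|\,\mathfrak{F}_{N}\bigr] \;=\; \mathscr{W}(N)\,\bm{\mathsf{E}}\bigl[1+\mathcal{F}\mathscr{Z}(N+1)\bigr].
\end{align*}
The sub/super/martingale character of $\mathscr{W}(N)$ is therefore determined entirely by the sign of the deterministic scalar $\bm{\mathsf{E}}[1+\mathcal{F}\mathscr{Z}(N+1)] - 1$ as a function of $\mathcal{F}$.

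Second I would tie this one-step multiplier back to $\bm{\mathsf{U}}(\mathcal{F},p)$. Lemma 3.3 gives $\bm{\mathsf{U}}(\mathcal{F},p) = p\log(1+\mathcal{F}) + q\log(1-\mathcal{F}) = \log\bigl((1+\mathcal{F})^{p}(1-\mathcal{F})^{q}\bigr)$, so the relevant geometric-mean one-step multiplier is exactly $\exp\bigl(\bm{\mathsf{U}}(\mathcal{F},p)\bigr)$, which is strictly greater than, equal to, or strictly less than unity according precisely to the sign of $\bm{\mathsf{U}}(\mathcal{F},p)$. Invoking the previously established sign trichotomy, $\bm{\mathsf{U}}>0$ on $(0,\mathcal{F}_{*})$ yields the submartingale inequality (case (1)), $\bm{\mathsf{U}}(\mathcal{F}_{*},p)=0$ yields the martingale equality (case (3), read with the intended $\mathcal{F}=\mathcal{F}_{*}$), and $\bm{\mathsf{U}}<0$ on $(\mathcal{F}_{*},1)$ yields the supermartingale inequality (case (2)).

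The main obstacle is reconciling the \emph{arithmetic}-mean multiplier $\bm{\mathsf{E}}[1+\mathcal{F}\mathscr{Z}] = 1+\mathcal{F}(p-q)$ that the linear conditional expectation produces directly with the \emph{geometric}-mean multiplier $(1+\mathcal{F})^{p}(1-\mathcal{F})^{q} = \exp(\bm{\mathsf{U}})$ that carries the sign information of the growth coefficient, since on $(\mathcal{F}_{*},1)$ the arithmetic mean remains above $1$ while the geometric mean drops below it. The cleanest route I see is to execute the telescoping step on the log-scale first: $\bm{\mathsf{E}}[\log\mathscr{W}(N+1)\,|\,\mathfrak{F}_{N}] = \log\mathscr{W}(N) + \bm{\mathsf{U}}(\mathcal{F},p)$ makes the sign of $\bm{\mathsf{U}}$ drive the sub/super/martingale inequality for $\log\mathscr{W}(N)$, and monotonicity of the exponential then transports the trichotomy back to $\mathscr{W}(N)$ itself, producing the three cases in the form stated in the theorem.
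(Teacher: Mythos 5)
Your opening computation is correct and is in fact the sharpest step in either argument: pulling the $\mathfrak{F}_{N}$-measurable factor out of the conditional expectation gives the exact identity $\bm{\mathsf{E}}[\mathscr{W}(N+1)\,|\,\mathfrak{F}_{N}]=\mathscr{W}(N)\,(1+\mathcal{F}(2p-1))$, with no inequality at all. But notice what it says: for $p>1/2$ and every $\mathcal{F}\in(0,1]$ the factor $1+\mathcal{F}(2p-1)$ is strictly greater than $1$, so $\mathscr{W}(N)$ is a strict submartingale on the \emph{whole} interval, including $(\mathcal{F}_{*},1)$. Your own first display therefore refutes parts (2) and (3) of the theorem as stated about $\mathscr{W}(N)$, and no subsequent manoeuvre can recover them. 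The "obstacle" you flag (arithmetic one-step mean above $1$ while the geometric mean drops below $1$) is not something to be reconciled; it is the substance of the situation. Your proposed resolution is exactly where the gap lies: monotonicity of $\exp$ does not transport a supermartingale property of $\log\mathscr{W}(N)$ back to $\mathscr{W}(N)$, because conditional expectation does not commute with a nonlinear map. The only available bridge is Jensen, and since $\exp$ is convex it gives
\begin{align}
\bm{\mathsf{E}}[\mathscr{W}(N+1)\,|\,\mathfrak{F}_{N}]\;\ge\;\exp\bigl(\bm{\mathsf{E}}[\log\mathscr{W}(N+1)\,|\,\mathfrak{F}_{N}]\bigr)\;=\;\mathscr{W}(N)\exp(\bm{\mathsf{U}}(\mathcal{F},p)),
\end{align}
a lower bound, which is useless for the "$\le$" direction required in cases (2) and (3).

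For comparison, the paper's proof fails at the same point. It uses Jensen (on the concave $\log$, equivalently the display above) to obtain the legitimate lower bound $\bm{\mathsf{E}}[\mathscr{W}(N+1)|\mathfrak{F}_{N}]\ge\mathscr{W}(N)\exp(\bm{\mathsf{U}})$, which genuinely proves case (1) when $\bm{\mathsf{U}}>0$ (your exact computation proves it more simply); but for cases (2) and (3) it merely "supposes" the two-sided chain $\mathscr{W}(N)e^{\bm{\mathsf{U}}}\le\bm{\mathsf{E}}[\mathscr{W}(N+1)|\mathfrak{F}_{N}]\le\mathscr{W}(N)$ and checks it is self-consistent, which is circular. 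The statement that is actually true, and that both you and the paper come within one line of proving, is the trichotomy for $\log\mathscr{W}(N)$: from $\bm{\mathsf{E}}[\log\mathscr{W}(N+1)\,|\,\mathfrak{F}_{N}]=\log\mathscr{W}(N)+\bm{\mathsf{U}}(\mathcal{F},p)$, the process $\log\mathscr{W}(N)$ is a sub-, true, or supermartingale according to the sign of $\bm{\mathsf{U}}$, and the strong law applied to $\tfrac{1}{N}\sum_{I}\log(1+\mathcal{F}\mathscr{Z}(I))$ then gives $\mathscr{W}(N)\to\infty$ or $\mathscr{W}(N)\to 0$ a.s., while $\mathscr{W}(N)$ itself remains a submartingale throughout. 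If you want a clean positive statement at the level of $\mathscr{W}(N)$, your identity shows that $\mathscr{W}(N)(1+\mathcal{F}(2p-1))^{-N}$ is a martingale for every $\mathcal{F}$, which is the normalisation the paper itself uses later in its Doob decomposition.
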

\begin{proof}
\textbf{To prove (1)}:~Begin with $\mathscr{W}(N+1)=\mathscr{W}(N)(1+\mathcal{F}\mathscr{Z}(N+1))$ where $\mathscr{W}(N)$ is an adapted process, adapted to the filtration $\mathfrak{F}_{N}$ so
\begin{align}
(\mathscr{W}(N+1)|\mathfrak{F}_{N})=\mathscr{W}(N)(1+\mathcal{F}\mathscr{Z}(N+1)|\mathfrak{F}_{N})
\end{align}
Taking the log of both sides
\begin{align}
\log(\mathscr{W}(N+1)|\mathfrak{F}_{N})&=\log(\mathscr{W}(N)(1+\mathcal{F}\mathscr{Z}(N+1)|\mathfrak{F}_{N}))\nonumber\\&=\log(\mathscr{W}(N))+\log(1+\mathcal{F}\mathscr{Z}(N+1)|\mathfrak{F}_{N}))
\end{align}
Now taking the expectation $\bm{\mathsf{E}}[\bullet]$
\begin{align}
&\log(\mathscr{W}(N+1)|\mathfrak{F}_{N})=\bm{\mathsf{E}}[\log(\mathscr{W}(N))]+\bm{\mathsf{E}}[\log(1+\mathcal{F}\mathscr{Z}(N+1)|\mathfrak{F}_{N}))]\nonumber\\&
=\log(\mathscr{W}(N))+\bm{\mathsf{E}}[\log(1+\mathcal{F}\mathscr{Z}(N+1)|\mathfrak{F}_{N}))]\nonumber\\&
=\log(\mathscr{W}(N))+p\log(1+\mathcal{F})+(1-p)\log(1-\mathcal{F})=\log(\mathscr{W}(N))+\bm{\mathsf{U}}(\mathcal{F},p)
\end{align}
and where $\mathscr{W}(N)$ is a known quantity at the $(N+1)^{th}$ trial. Now apply Jensen's inequality to the lhs. If $f$ is concave then $\bm{\mathsf{E}}[f(\mathscr{R})\le f(\bm{\mathsf{E}}[\mathscr{R}])$ for a random variable $\mathscr{R}$ and a concave function $f$. Then
\begin{align}
\log(\mathscr{W}(N+1)|\mathfrak{F}_{N})\le \log\bm{\mathsf{E}}[\mathscr{W}(N+1)|\mathfrak{F}_{N}]
\end{align}
which is
\begin{align}
\log(\mathscr{W}(N))+\bm{\mathsf{U}}(\mathcal{F},p) \le \log\bm{\mathsf{E}}[\mathscr{W}(N+1)|\mathfrak{F}_{N}]
\end{align}
Now taking the exponential of both sides
\begin{align}
\mathscr{W}(N))\exp(\bm{\mathsf{U}}(\mathcal{F},p))\le \bm{\mathsf{E}}[\mathscr{W}(N+1)|\mathfrak{F}_{N}]
\end{align}
or
\begin{align}
\bm{\mathsf{E}}[\mathscr{W}(N+1)|\mathfrak{F}_{N}]\ge \mathscr{W}(N)\exp(\bm{\mathsf{U}}(\mathcal{F},p))
\end{align}
Suppose now the following inequality holds
\begin{align}
\bm{\mathsf{E}}[\mathscr{W}(N+1)|\mathfrak{F}_{N}]\ge \mathscr{W}(N)\exp(\bm{\mathsf{U}}(\mathcal{F},p))\ge \mathscr{W}(N)
\end{align}
If this is true then $\mathscr{W}(N)$ is a submartingale. This requires
\begin{align}
\mathscr{W}(N)\exp(\bm{\mathsf{U}}(\mathcal{F},p))\ge \mathscr{W}(N)
\end{align}
or $\exp(\bm{\mathsf{U}}(\mathcal{F},p))\ge 1$. From the basic properties of the exponential function, $\exp(\bm{\mathsf{U}}(\mathcal{F},p))\ge 1$ iff $\bm{\mathsf{U}}(\mathcal{F},p)>0$ which is the case if
$\mathcal{F}\in(0,\mathcal{F}_{*})$. Hence $\bm{\mathsf{E}}[\mathscr{W}(N+1)|\mathfrak{F}_{N}]\ge \mathscr{W}(N)$  and so the process is a submartingale and increases on average.\newline
\textbf{To prove (2)}: Begin with (4.12)
\begin{align}
\mathscr{W}(N))\exp(\bm{\mathsf{U}}(\mathcal{F},p))\le \bm{\mathsf{E}}[\mathscr{W}(N+1)|\mathfrak{F}_{N}]
\end{align}
Suppose the following inequality is true
\begin{align}
\mathscr{W}(N))\exp(\bm{\mathsf{U}}(\mathcal{F},p))\le \bm{\mathsf{E}}[\mathscr{W}(N+1)|\mathfrak{F}_{N}]\le \mathscr{W}(N)
\end{align}
then the process is a supermartingale. This inequality is true if $\exp(\bm{\mathsf{U}}(\mathcal{F},p))\le 1$ which requires $\bm{\mathsf{U}}(\mathcal{F},p)<0$ and this is the case for $\mathcal{F}\in(\mathcal{F}_{*},1]$. Hence $\bm{\mathsf{E}}[\mathscr{W}(N+1)|\mathfrak{F}_{N}]\le \mathscr{W}(N)$ and the process is a supermartinagle and decreases on average.\newline
\textbf{To prove (3)}: Beginning again with
\begin{align}
\mathscr{W}(N))\exp(\bm{\mathsf{U}}(\mathcal{F},p))\le \bm{\mathsf{E}}[\mathscr{W}(N+1)|\mathfrak{F}_{N}]\le \mathscr{W}(N)
\end{align}
If $\mathcal{F}=\mathcal{F}_{*}$ then $\bm{\mathsf{U}}(\mathcal{F}_{*},p)=0$ and $\exp(\bm{\mathsf{U}}(\mathcal{F}_{*},p))=0$ giving
\begin{align}
\mathscr{W}(N))\le \bm{\mathsf{E}}[\mathscr{W}(N+1)|\mathfrak{F}_{N}]\le \mathscr{W}(N)
\end{align}
This can only be satisfied in the equality
\begin{align}
\mathscr{W}(N))=\bm{\mathsf{E}}[\mathscr{W}(N+1)|\mathfrak{F}_{N}]=\mathscr{W}(N)
\end{align}
and so the process in this case is a martingale.
\end{proof}
The following estimate is now given for the expectation $\bm{\mathsf{E}}[\mathscr{W}(N)]$.
\begin{lem}
\begin{align}
\bm{\mathsf{E}}[\mathscr{W}(N)]=\mathscr{W}(0)(1+\mathcal{F}(2p-1))^{N}
\end{align}
\end{lem}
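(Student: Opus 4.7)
The plan is to exploit the product representation of $\mathscr{W}(N)$ established in Lemma~3.1, namely $\mathscr{W}(N)=\mathscr{W}(0)\prod_{I=1}^{N}(1+\mathcal{F}\mathscr{Z}(I))$, together with the i.i.d.\ and Markovian nature of the trials $\mathscr{Z}(I)$ stated in the definition of $\mathfrak{G}$. Since $\mathscr{W}(0)$ is a deterministic constant, it factors out of the expectation, and the core of the argument is computing the expectation of a product of independent factors.

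First I would record that each factor $(1+\mathcal{F}\mathscr{Z}(I))$ is measurable with respect to the outcome of the $I^{\mathrm{th}}$ trial alone, and since $\{\mathscr{Z}(I)\}_{I=1}^{N}$ are independent, the factors are mutually independent. By the standard multiplicative property of expectation for independent random variables, this gives
\begin{align}
\bm{\mathsf{E}}[\mathscr{W}(N)]=\mathscr{W}(0)\prod_{I=1}^{N}\bm{\mathsf{E}}[1+\mathcal{F}\mathscr{Z}(I)].
\end{align}

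Next I would compute a single factor. Using $\bm{\mathsf{P}}(\mathscr{Z}(I)=+1)=p$ and $\bm{\mathsf{P}}(\mathscr{Z}(I)=-1)=q=1-p$, linearity of expectation yields $\bm{\mathsf{E}}[\mathscr{Z}(I)]=p-q=2p-1$, so
\begin{align}
\bm{\mathsf{E}}[1+\mathcal{F}\mathscr{Z}(I)]=1+\mathcal{F}(p-q)=1+\mathcal{F}(2p-1),
\end{align}
which is the same value for every $I$. Substituting this back, the product over $I=1,\dots,N$ collapses to the $N^{\mathrm{th}}$ power, giving the claimed identity $\bm{\mathsf{E}}[\mathscr{W}(N)]=\mathscr{W}(0)(1+\mathcal{F}(2p-1))^{N}$.

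There is no real obstacle here; the only subtlety worth flagging is the legitimacy of the factorisation step, which relies cleanly on the i.i.d.\ hypothesis assumed throughout the paper. If desired, I could present the argument inductively: assuming $\bm{\mathsf{E}}[\mathscr{W}(N-1)]=\mathscr{W}(0)(1+\mathcal{F}(2p-1))^{N-1}$, condition on the filtration $\mathfrak{F}_{N-1}$ and use $\bm{\mathsf{E}}[\mathscr{W}(N)\mid\mathfrak{F}_{N-1}]=\mathscr{W}(N-1)(1+\mathcal{F}(2p-1))$ (since $\mathscr{Z}(N)$ is independent of $\mathfrak{F}_{N-1}$), then apply the tower property. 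This inductive version dovetails with the conditional-expectation machinery used in Theorem~4.1 and would make the statement a direct corollary of that framework.
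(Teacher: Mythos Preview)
Your proof is correct and follows essentially the same route as the paper: start from the product representation $\mathscr{W}(N)=\mathscr{W}(0)\prod_{I=1}^{N}(1+\mathcal{F}\mathscr{Z}(I))$, use independence of the $\mathscr{Z}(I)$ to factor the expectation, compute each factor as $p(1+\mathcal{F})+q(1-\mathcal{F})=1+\mathcal{F}(2p-1)$, and collapse the product to an $N^{\mathrm{th}}$ power. If anything, your write-up is slightly cleaner in explicitly flagging the independence hypothesis that justifies the factorisation, and the optional inductive/conditioning variant you sketch is a nice complement but not needed.
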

\begin{proof}
\begin{align}
\mathscr{W}(N)=\mathscr{W}(0)\prod_{I=1}^{N}(1+\mathcal{F}\mathscr{Z}(N))
\end{align}
Taking the expectation
\begin{align}
&\bm{\mathsf{E}}[\mathscr{W}(N)]=\mathscr{W}(0)\bm{\mathsf{E}}\left[\prod_{I=1}^{N}(1+\mathcal{F}\mathscr{Z}(N))\right]\nonumber\\&
=\mathscr{W}(0)\prod_{I=1}^{N}[p(1+\mathcal{F})+q(1-\mathcal{F})]_{I}\nonumber\\&
=\mathscr{W}(0)\prod_{I=1}^{N}[p(1+\mathcal{F})+(1-p)(1-\mathcal{F})]_{I}\nonumber\\&
=\mathscr{W}(0)\prod_{I=1}^{N}[(1+2\mathcal{F}-\mathcal{F})]_{I}\nonumber\\&
=\mathscr{W}(0)\prod_{I=1}^{N}(1+\mathcal{F}(2p-1))_{I}\nonumber
=\mathscr{W}(0)(1+\mathcal{F}(2p-1))^{N}
\end{align}
since $(1+\mathcal{F}(2p-1))_{I}=(1+\mathcal{F}(2p-1))$ for all $I=1...N$.
\end{proof}
\begin{lem}
\begin{align}
\overline{\bm{\mathsf{E}}[\mathscr{W}(N)]}=\mathscr{W}(0)(1+p\mathcal{F})^{N}(1-q\mathcal{F})^{N}
\end{align}
\end{lem}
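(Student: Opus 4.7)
The plan is to start from the product representation of Lemma 3.1, namely $\mathscr{W}(N)=\mathscr{W}(0)(1+\mathcal{F})^{U}(1-\mathcal{F})^{V}$, and push the expectation inside the product by exploiting the marginal distributions $U\sim\mathrm{Binom}(N,p)$ and $V\sim\mathrm{Binom}(N,q)$ already established in Section 2. The overline here should be read as the estimate obtained by factoring the expectation over $U$ and $V$ separately, which is consistent with the (formally asserted) statement $\bm{\mathsf{COV}}(U,V)=0$ of Lemma 2.3. So the first step is to write
\begin{align*}
\overline{\bm{\mathsf{E}}[\mathscr{W}(N)]}
=\mathscr{W}(0)\,\bm{\mathsf{E}}\!\left[(1+\mathcal{F})^{U}\right]\,\bm{\mathsf{E}}\!\left[(1-\mathcal{F})^{V}\right].
\end{align*}

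Next, I would evaluate each factor as a probability generating function for a binomial. For the first factor, expanding against the binomial PMF and applying the binomial theorem:
\begin{align*}
\bm{\mathsf{E}}\!\left[(1+\mathcal{F})^{U}\right]
=\sum_{\alpha=0}^{N}\binom{N}{\alpha}(1+\mathcal{F})^{\alpha}p^{\alpha}(1-p)^{N-\alpha}
=\bigl(p(1+\mathcal{F})+(1-p)\bigr)^{N}
=(1+p\mathcal{F})^{N}.
\end{align*}
An identical computation for $V$, using $V\sim\mathrm{Binom}(N,q)$, gives $\bm{\mathsf{E}}[(1-\mathcal{F})^{V}]=(q(1-\mathcal{F})+(1-q))^{N}=(1-q\mathcal{F})^{N}$. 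Multiplying the two factors together and restoring $\mathscr{W}(0)$ yields the claimed identity.

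The main obstacle, and the reason the overline is needed at all, is that $U$ and $V$ are not genuinely independent: the constraint $U+V=N$ forces $\bm{\mathsf{COV}}(U,V)=-Npq$, so the factorisation step above is not exact but rather the estimate one obtains by treating the marginals as independent (which is how the paper has been treating them). I would therefore flag this explicitly in the proof: the identity $\overline{\bm{\mathsf{E}}[\mathscr{W}(N)]}=\mathscr{W}(0)(1+p\mathcal{F})^{N}(1-q\mathcal{F})^{N}$ is the factored (marginal) estimate, as distinguished from the exact value $\mathscr{W}(0)(1+\mathcal{F}(2p-1))^{N}$ of Lemma 4.2, and the two differ precisely because expansion of $(1+p\mathcal{F})(1-q\mathcal{F})=1+(p-q)\mathcal{F}-pq\mathcal{F}^{2}$ introduces the cross term $-pq\mathcal{F}^{2}$ absent from the exact formula. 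Beyond this conceptual point, the computation itself is routine binomial-theorem algebra and should not pose any difficulty.
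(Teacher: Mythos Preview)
Your proposal is correct and follows essentially the same route as the paper: start from $\mathscr{W}(N)=\mathscr{W}(0)(1+\mathcal{F})^{U}(1-\mathcal{F})^{V}$, factor the expectation over $U$ and $V$, and evaluate each factor against the binomial law. The only cosmetic difference is that the paper packages the evaluation of $\bm{\mathsf{E}}[(1+\mathcal{F})^{U}]$ and $\bm{\mathsf{E}}[(1-\mathcal{F})^{V}]$ through the binomial moment-generating function of Appendix~A (substituting $\xi=\log(1\pm\mathcal{F})$), whereas you carry out the identical binomial-theorem sum directly; your explicit caveat about the $U+V=N$ dependence is a welcome clarification that the paper leaves implicit.
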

\begin{proof}
Since $\mathscr{W}(N)=\mathscr{W}(0)(1+\mathcal{F})^{U}(1-\mathcal{F})^{V}$ with $N=U+V=$ wins+losses, then the expectation is
\begin{align}
\overline{\bm{\mathsf{E}}[\mathscr{W}(N)]}=\mathscr{W}(0)\bm{\mathsf{E}}[(1+\mathcal{F})^{U}(1-\mathcal{F})^{V}]=\mathscr{W}(0)\bm{\mathsf{E}}[(1+\mathcal{F})^{U}]\bm{\mathsf{E}}[(1-\mathcal{F})^{V}]
\end{align}
Now using the binomial moment-generating functions (Appendix A)
\begin{align}
&\bm{\mathsf{M}}_{N}(U,\xi)=\bm{\mathsf{E}}[\exp(\xi U)]=(1-p+p\exp(\xi))^{N}\\&
\bm{\mathsf{M}}_{N}(V,\xi)=\bm{\mathsf{E}}[\exp(\xi V)]=(1-q+q\exp(\xi))^{N}
\end{align}
and setting $\xi=\log(1\pm\mathcal{F})$ gives
\begin{align}
&\bm{\mathsf{E}}[(1+\mathcal{F})^{U}]=(1+p\mathcal{F})^{N}\\&
\bm{\mathsf{E}}[(1-\mathcal{F})^{V}]=(1-q\mathcal{F})^{N}
\end{align}
and so (4.22) immediately follows.
\end{proof}
\begin{cor}
\begin{align}
\overline{\bm{\mathsf{E}}[\mathscr{W}(N)]}\sim \bm{\mathsf{E}}[\mathscr{W}(N)]
\end{align}
\end{cor}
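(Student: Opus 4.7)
The plan is to expand the product in $\overline{\bm{\mathsf{E}}[\mathscr{W}(N)]}=\mathscr{W}(0)(1+p\mathcal{F})^{N}(1-q\mathcal{F})^{N}$ and compare it term-by-term with the expression $\bm{\mathsf{E}}[\mathscr{W}(N)]=\mathscr{W}(0)(1+\mathcal{F}(2p-1))^{N}$ from Lemma~4.2. The natural first step is to combine the two factors of the overline expectation into a single base:
\begin{align}
(1+p\mathcal{F})(1-q\mathcal{F}) = 1+(p-q)\mathcal{F}-pq\mathcal{F}^{2} = 1+(2p-1)\mathcal{F}-pq\mathcal{F}^{2},
\end{align}
using $p+q=1$. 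Hence
\begin{align}
\overline{\bm{\mathsf{E}}[\mathscr{W}(N)]} = \mathscr{W}(0)\bigl(1+(2p-1)\mathcal{F}-pq\mathcal{F}^{2}\bigr)^{N}.
\end{align}

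Next I would compare this with $\bm{\mathsf{E}}[\mathscr{W}(N)]=\mathscr{W}(0)(1+(2p-1)\mathcal{F})^{N}$. The two bases differ only by the quadratic correction $-pq\mathcal{F}^{2}$. I would form the ratio
\begin{align}
\frac{\overline{\bm{\mathsf{E}}[\mathscr{W}(N)]}}{\bm{\mathsf{E}}[\mathscr{W}(N)]} = \left(1 - \frac{pq\mathcal{F}^{2}}{1+(2p-1)\mathcal{F}}\right)^{N},
\end{align}
and then invoke the standing assumption, used throughout Section~3, that $\mathcal{F}$ is small (since in practice $p$ is only slightly larger than $\tfrac{1}{2}$, so $\mathcal{F}_{K}=2p-1$ is small). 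Under this regime $pq\mathcal{F}^{2}=O(\mathcal{F}^{2})$ is negligible compared to the linear term $(2p-1)\mathcal{F}$, so the bases of the two $N$-fold powers coincide to leading order and $\overline{\bm{\mathsf{E}}[\mathscr{W}(N)]}\sim \bm{\mathsf{E}}[\mathscr{W}(N)]$ in the $\mathcal{F}\to 0$ sense used earlier (see for example Lemma~3.7 where an analogous quadratic-in-$\mathcal{F}$ truncation is made).

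Alternatively, and perhaps more transparently, I would pass to logarithms and Taylor expand: $\log(1+p\mathcal{F})+\log(1-q\mathcal{F}) = (p-q)\mathcal{F} - \tfrac{1}{2}(p+q)\mathcal{F}^{2}+O(\mathcal{F}^{3}) = (2p-1)\mathcal{F}-\tfrac{1}{2}\mathcal{F}^{2}+O(\mathcal{F}^{3})$, while $\log(1+(2p-1)\mathcal{F}) = (2p-1)\mathcal{F} - \tfrac{1}{2}(2p-1)^{2}\mathcal{F}^{2}+O(\mathcal{F}^{3})$. The leading linear terms agree, and the $O(\mathcal{F}^{2})$ discrepancies are dropped consistently with the small-$\mathcal{F}$ conventions adopted throughout the paper.

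The main obstacle is honesty about what $\sim$ really means: for fixed small $\mathcal{F}$ but large $N$, the ratio in the display above behaves like $\exp(-N\,pq\mathcal{F}^{2}/(1+(2p-1)\mathcal{F}))$, which is only close to $1$ when $N\mathcal{F}^{2}\ll 1$. So the cleanest way to phrase the result is as an asymptotic equivalence of the bases of the $N$-fold powers (i.e.\ to first order in $\mathcal{F}$), rather than as a genuine $N\to\infty$ equivalence of the expectations themselves. I would state this caveat explicitly and then close by noting that in the intended small-$\mathcal{F}$ Kelly regime the two expressions serve as interchangeable estimates for $\bm{\mathsf{E}}[\mathscr{W}(N)]$.
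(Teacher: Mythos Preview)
Your proposal is correct and follows essentially the same route as the paper: combine $(1+p\mathcal{F})(1-q\mathcal{F})=1+(2p-1)\mathcal{F}-pq\mathcal{F}^{2}$ and then drop the $pq\mathcal{F}^{2}$ term on the grounds that $\mathcal{F}$ is small (the paper even gives the same numerical illustration, $p=0.51$, $\mathcal{F}=0.02$, $pq\mathcal{F}^{2}\approx 10^{-4}$). Your additional ratio computation and the explicit caveat that the equivalence is really a small-$\mathcal{F}$ statement about the bases (and requires $N\mathcal{F}^{2}\ll 1$ to hold for the $N$-th powers) go beyond what the paper says, but they sharpen rather than alter the argument.
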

\begin{proof}
\begin{align}
\overline{\bm{\mathsf{E}}[\mathscr{W}(N)]}=\mathscr{W}(0)(1+p\mathcal{F})^{N}(1-q\mathcal{F})^{N}=\mathscr{W}(0)[1+p\mathcal{F}-q\mathcal{F}-pq\mathcal{F}^{2}]^{N}
\end{align}
For small $\mathcal{F}$, which is true for $p>1/2$ and p close to $1/2$ then the term $pq\mathcal{F}^{2}$ is very small and can be ignored. For example, if $p=0.51$ then
$q=0.49$ and $\mathcal{F}=p-q=0.02$ giving $pq\mathcal{F}^{2}=0.00009996$. Then
\begin{align}
\overline{\bm{\mathsf{E}}[\mathscr{W}(N)]}&=\mathscr{W}(0)(1+p\mathcal{F})^{N}(1-q\mathcal{F})^{N}=\mathscr{W}(0)[1+p\mathcal{F}-q\mathcal{F}-pq\mathcal{F}^{2}]^{N}\nonumber\\&
\sim \mathscr{W}(0)(1+p\mathcal{F}-q\mathcal{F})^{N}=\mathscr{W}(0)(1+\mathcal{F}(p-q))^{N}\nonumber\\&
=\mathscr{W}(0)(1+\mathcal{F}(2p-1))^{N}=\bm{\mathsf{E}}[\mathscr{W}(N)]
\end{align}
\end{proof}
\begin{cor}
If $\mathcal{F}=\mathcal{F}_{K}=p-q$ then
\begin{align}
\bm{\mathsf{E}}[\mathscr{W}(N)]=\mathscr{W}(0)(1+\mathit{p}^{2}-pq)^{N}(1-pq+\mathit{q}^{2})^{N}
\end{align}
If $\mathcal{F}=p-q=0$ for $p=q=\tfrac{1}{2}$ then $\bm{\mathsf{E}}[\mathscr{W}(N)]=\mathscr{W}(0)$ since the game is not played within the Kelly strategy unless $p>q$.
\end{cor}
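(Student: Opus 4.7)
The plan is to obtain this corollary by a direct substitution into the product formula established in the preceding lemma (equation~(4.22)), namely $\overline{\bm{\mathsf{E}}[\mathscr{W}(N)]}=\mathscr{W}(0)(1+p\mathcal{F})^{N}(1-q\mathcal{F})^{N}$. Since this identity holds for any admissible $\mathcal{F}\in[0,1]$, the Kelly value $\mathcal{F}_{K}=p-q$ is a legitimate input and nothing beyond algebra is required to reach the claimed expression.

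Concretely, I would first set $\mathcal{F}=\mathcal{F}_{K}=p-q$ and expand each factor separately. The first factor becomes $1+p\mathcal{F}_{K}=1+p(p-q)=1+p^{2}-pq$, and the second factor becomes $1-q\mathcal{F}_{K}=1-q(p-q)=1-pq+q^{2}$. Substituting these back into the lemma gives precisely
\begin{align}
\bm{\mathsf{E}}[\mathscr{W}(N)]=\mathscr{W}(0)(1+p^{2}-pq)^{N}(1-pq+q^{2})^{N},
\end{align}
where I have used the identification $\overline{\bm{\mathsf{E}}[\mathscr{W}(N)]}\sim\bm{\mathsf{E}}[\mathscr{W}(N)]$ from the preceding corollary (valid for small $\mathcal{F}$, which is the regime relevant to Kelly betting since $p$ is close to $1/2$).

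For the second assertion, specialising further to the fair-game case $p=q=\tfrac{1}{2}$ forces $\mathcal{F}_{K}=p-q=0$, so both factors $(1+p\mathcal{F}_{K})^{N}$ and $(1-q\mathcal{F}_{K})^{N}$ collapse to $1^{N}=1$, leaving $\bm{\mathsf{E}}[\mathscr{W}(N)]=\mathscr{W}(0)$. The interpretive remark — that the Kelly strategy is not engaged in a fair game — then follows from the observation that $B(I)=\mathcal{F}_{K}\mathscr{W}(I-1)=0$, so no wager is ever placed and the wealth is trivially conserved.

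There is no genuine obstacle here; the statement is essentially a worked example of Lemma~4.3. The only mild subtlety to flag is the distinction between the two equivalent expectation formulas (the unfactored one from Lemma~4.2, giving $\mathscr{W}(0)(1+(2p-1)^{2})^{N}$ at $\mathcal{F}=\mathcal{F}_{K}$, versus the factored one from Lemma~4.3 used here); these agree only asymptotically for small $\mathcal{F}$, as already established by Corollary~4.5, so I would cite that corollary to justify writing an equality rather than an approximate equality in the displayed formula.
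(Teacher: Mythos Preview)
Your proposal is correct and matches the paper's intended argument: the corollary is stated in the paper without an explicit proof, as an immediate consequence of substituting $\mathcal{F}=\mathcal{F}_{K}=p-q$ into the factored formula (4.22) from Lemma~4.3, which is precisely what you do. Your additional remark about invoking Corollary~4.4 to reconcile the overlined and non-overlined expectations is a reasonable clarification of a notational looseness that the paper itself leaves implicit.
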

\begin{figure}[htb]
\begin{center}
\includegraphics[height=3.0in,width=6.0in]{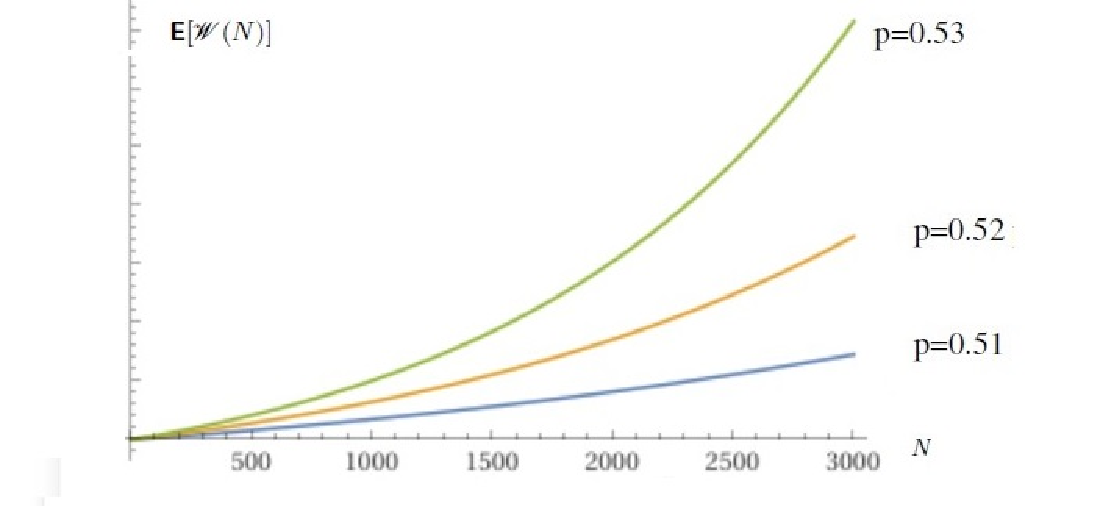}
\caption{Plots of $\bm{\mathsf{E}}[\mathscr{W}(N)]=\mathscr{W}(0)(1+\mathcal{F}_{K}(2p-1)-p(1-p)\mathcal{F}^{2})^{N}$ vs $N$ for $p=0.51, p=0.52, p=0.53$ for the Kelly fraction $\mathcal{F}_{K}=2p-1$}
\end{center}
\end{figure}
\subsection{Expectation in the supermartinagle realm}
The previous graph was for $\bm{\mathsf{E}}[\mathscr{W}(N)]$ and the Kelly fraction $\mathcal{F}=\mathcal{F}_{K}=p-q=2p-1$ where $\mathscr{W}(N)$ is a submartingale so the plots are monotonically
increasing with $N$. Suppose now the supermartingale realm is considered when $\mathcal{F}>\mathcal{F}_{*}$ or $\mathcal{F}\in(\mathcal{F}_{*},]$. For this choice of $\mathcal{F}$ one should find that the plots are monotonically decreasing with N, as is the case.
\begin{figure}[htb]
\begin{center}
\includegraphics[height=3.0in,width=6.0in]{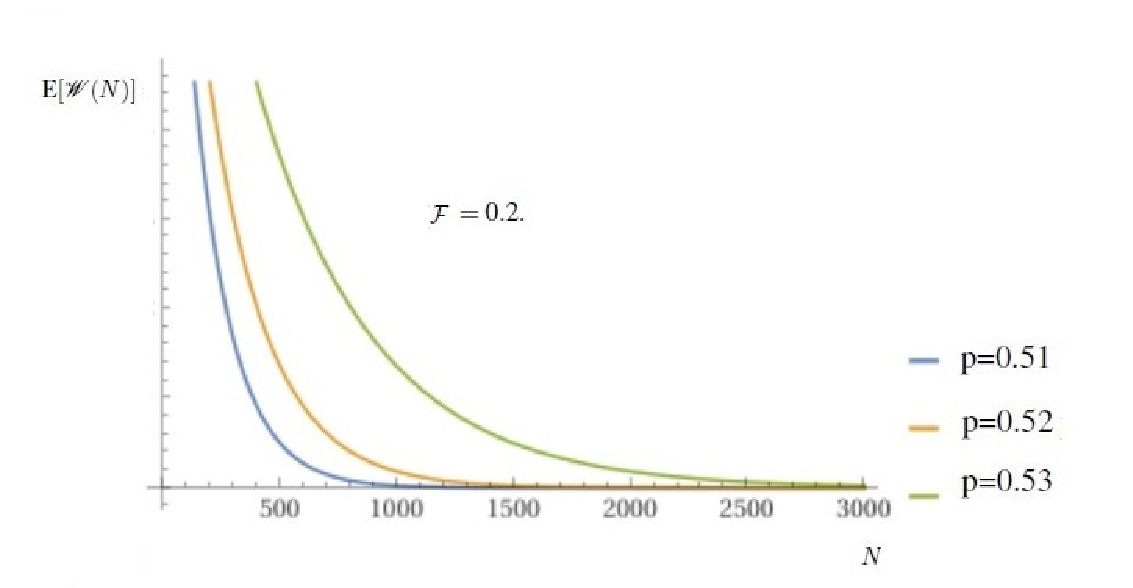}
\caption{Plots of $\bm{\mathsf{E}}[\mathscr{W}(N)]=\mathscr{W}(0)(1+\mathcal{F}(2p-1)-p(1-p)\mathcal{F}^{2}))^{N}$ vs $N$
for $p=0.51, p=0.52, p=0.53$ for $\mathcal{F}=0.2\gg\mathcal{F}_{*})$}
\end{center}
\end{figure}However, the full formula for $\bm{\mathsf{E}}[\mathscr{W}(N)]$ is used, that is $\bm{\mathsf{E}}[\mathscr{W}(N)]=\mathscr{W}(0)(1+\mathcal{F}(2p-1)-pq\mathcal{F}^{2})^{N}$, restoring the quadratic term. Choosing $\mathcal{F}\gg \mathcal{F}_{*}$ and $p=0.51,0.52,0.53$ as before with $\mathcal{F}=0.2$,the behavior of $\bm{\mathsf{E}}[\mathscr{W}(N)]$ is plotted against N. It can be seen that any initial $\mathscr{W}(0)$ is rapidly driven to zero as N increases since $\mathscr{W}(N)$ is now a supermartingale--any initial wealth or liquidity $\mathscr{W}(0)$ is guaranteed to be lost with unit probability when using a fraction $\mathcal{F}>\mathcal{F}_{*}$.
\newline
\subsection{Application of some classic submartinagle theorems}
For the regime where $\mathscr{W}(N)$ is a submartingale, the classic Doob submartingale theorems can also be applied \textbf{[20]}.
\begin{lem}
For large N, and $\mathcal{F}\in(0,\mathcal{F}_{*})$ with $\bm{\mathsf{U}}(\mathcal{F},p)$, the expectation of wealth grows exponentially so that
\begin{align}
\bm{\mathsf{E}}[\mathscr{W}(N)]\sim\mathscr{W}(0)\exp(N|p-q|\mathcal{F})
\end{align}
\end{lem}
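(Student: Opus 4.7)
The plan is to build directly on Lemma (4.20), namely the exact closed form $\bm{\mathsf{E}}[\mathscr{W}(N)]=\mathscr{W}(0)(1+\mathcal{F}(2p-1))^{N}$, and then pass to the exponential approximation using the fact that the multiplicative factor $\mathcal{F}(2p-1)=\mathcal{F}(p-q)$ is small in the natural Kelly regime. The key identity is the rewriting
\begin{align*}
\bm{\mathsf{E}}[\mathscr{W}(N)] = \mathscr{W}(0)\exp\bigl(N\log(1+\mathcal{F}(p-q))\bigr),
\end{align*}
after which the statement follows by Taylor expanding the logarithm.

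First I would recall that Lemma (4.20) gives the exact expression $\bm{\mathsf{E}}[\mathscr{W}(N)]=\mathscr{W}(0)(1+\mathcal{F}(p-q))^N$ for the biased game with $p>q$, so that $|p-q|=p-q$ and the argument is positive. Next I would observe that in the submartingale regime $\mathcal{F}\in(0,\mathcal{F}_{*})$ and for $p$ close to $1/2$ (the realistic case discussed after Corollary (4.4)), the product $\mathcal{F}(p-q)$ is small, so the standard expansion $\log(1+x)=x-\tfrac{1}{2}x^{2}+O(x^{3})$ applies and yields $\log(1+\mathcal{F}(p-q))\sim \mathcal{F}(p-q)$ to leading order. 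Substituting this into the rewriting above gives
\begin{align*}
\bm{\mathsf{E}}[\mathscr{W}(N)] \sim \mathscr{W}(0)\exp(N\mathcal{F}(p-q)) = \mathscr{W}(0)\exp(N|p-q|\mathcal{F}),
\end{align*}
which is the claimed asymptotic.

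There is essentially no deep obstacle: the only subtle point is that the neglected $O(x^{2})$ term in the logarithm, when multiplied by $N$, contributes an error of order $N\mathcal{F}^{2}(p-q)^{2}$ inside the exponential, so the approximation is robust provided $\mathcal{F}(p-q)\ll 1$, which is precisely the regime considered throughout the paper (see the numerical estimate after Corollary (4.3) where $pq\mathcal{F}^{2}\sim 10^{-4}$). If one wanted a fully rigorous statement rather than the asymptotic $\sim$, one could either restrict to the double limit $N\to\infty$ with $\mathcal{F}(p-q)\to 0$ such that $N\mathcal{F}^{2}(p-q)^{2}\to 0$, or simply record the exact identity $\bm{\mathsf{E}}[\mathscr{W}(N)]=\mathscr{W}(0)\exp(N\log(1+\mathcal{F}(p-q)))$, interpreting the exponential growth rate as $\log(1+\mathcal{F}(p-q))$ rather than $\mathcal{F}(p-q)$. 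I would conclude by noting consistency with Theorem (4.1)(1): since $\mathcal{F}\in(0,\mathcal{F}_{*})$ forces $\mathcal{F}(p-q)>0$, the exponent is strictly positive and so $\bm{\mathsf{E}}[\mathscr{W}(N)]\to\infty$ exponentially fast, confirming the submartingale growth.
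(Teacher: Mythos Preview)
Your argument is correct and uses the same core technique as the paper: rewrite the closed-form expectation as an exponential of a logarithm and then Taylor-expand. The one difference is the starting point. You work from the exact formula $\bm{\mathsf{E}}[\mathscr{W}(N)]=\mathscr{W}(0)(1+\mathcal{F}(p-q))^{N}$ of Lemma~4.2 and expand a single logarithm $\log(1+\mathcal{F}(p-q))$, whereas the paper begins from the product form $\overline{\bm{\mathsf{E}}[\mathscr{W}(N)]}=\mathscr{W}(0)(1+p\mathcal{F})^{N}(1-q\mathcal{F})^{N}$ of Lemma~4.3 and expands both $\log(1+p\mathcal{F})$ and $\log(1-q\mathcal{F})$ separately before summing. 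Your route is marginally more direct, since it starts from the exact expectation rather than the approximate one (cf.\ Corollary~4.4), and your explicit remark about the $O\bigl(N\mathcal{F}^{2}(p-q)^{2}\bigr)$ error inside the exponential is a useful addition that the paper omits; but both routes produce the identical leading-order exponent $N\mathcal{F}(p-q)$.
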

\begin{proof}
\begin{align}
\bm{\mathsf{E}}\left[\frac{\mathscr{W}(N)}{\mathscr{W}(0)}\right]=(1+{p}\mathcal{F})^{N}(1-{q}\mathcal{F})^{N}
\end{align}
Then
\begin{align}
\log\bm{\mathsf{E}}\left[\frac{\mathscr{W}(N)}{\mathscr{W}(0)}\right]=N\log(1+\mathsf{p}\mathcal{F})+N\log(1-\mathsf{q}\mathcal{F})
\end{align}
Since $\mathsf{p}\mathcal{F}$ and $\mathsf{q}\mathcal{F}$ are very small the logs can be expanded as $\log(1\pm x)\sim \pm x$ so that
\begin{align}
&\log\bm{\mathsf{E}}\left[\frac{\mathscr{W}(N)}{\mathscr{W}(0)}\right]=N\log(1+\mathit{p}\mathcal{F})+N\log(1-\mathit{q}\mathcal{F})\nonumber\\&\sim (N\mathit{p}\mathcal{F}-N\mathit{q}\mathcal{F})=N(\mathit{p}\mathcal{F}-\mathit{q}\mathcal{F})
\end{align}
Then
\begin{align}
\bm{\mathsf{E}}\left[\frac{\mathscr{W}(N)}{\mathscr{W}(0)}\right] \sim \exp(N\mathcal{F}(\mathit{p}-\mathit{q}))
\end{align}
or
\begin{align}
\bm{\mathsf{E}}[\mathscr{W}(N)]\sim \mathscr{W}(0)\exp(N\mathcal{F}(\mathit{p}-\mathit{q}))
\end{align}
and so the expectation of the liquidity grows with N with $F\in (0,\mathcal{F}_{K}]$.
\end{proof}
When $\mathcal{F}\in(0,\mathcal{F}_{*})$ then $\mathscr{W}(N)$ is a submartingale and so classic submartingale theorems apply.
\begin{lem}(\textbf{Doob submartingale Theorem})~Let $\lambda>0$, then the probability that $\mathscr{W}(K)\ge\lambda$ for $K\le N$ is
\begin{align}
\bm{\mathsf{P}}\left(\sup_{K\le N}\mathscr{W}(K)\ge \lambda\right)\le \frac{\bm{\mathsf{E}}[\mathscr{W}(N)]}{\lambda}
\end{align}
Using the expressions already derived for $\bm{\mathsf{E}}[\mathscr{W}(N)]$ gives
\begin{align}
&\bm{\mathsf{P}}\left(\sup_{K\le N}\mathscr{W}(K)\ge \lambda\right)\le \frac{\mathscr{W}(0)(1+\mathcal{F}(2p-1))^{N}}{\lambda}\\&
\bm{\mathsf{P}}\left(\sup_{K\le N}\mathscr{W}(K)\ge \lambda\right)\le \frac{\mathscr{W}(0)[(1+p\mathcal{F})(1-q\mathcal{F})]^{N}}{\lambda}
\end{align}
However, the right-hand sides have to be less than or equal to one so that
\begin{align}
&\frac{\mathscr{W}(0)(1+\mathcal{F}(2p-1))^{N}}{\lambda}\le 1\\&
 \frac{\mathscr{W}(0)[(1+p\mathcal{F})(1-q\mathcal{F})]^{N}}{\lambda}\le 1
\end{align}
which imposes constraints on $\lambda$ such that
\begin{align}
&\lambda\ge \mathscr{W}(0)(1+\mathcal{F}(2p-1))^{N}\\&
\lambda\ge \mathscr{W}(0)[(1+p\mathcal{F})(1-q\mathcal{F})]^{N}
\end{align}
\end{lem}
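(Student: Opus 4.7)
The plan is to recognise that the first displayed inequality of the lemma is simply the classical Doob maximal inequality for non-negative submartingales, specialised to the process $\{\mathscr{W}(K)\}_{K=1}^{N}$, and then to substitute into it the two closed-form estimates of $\bm{\mathsf{E}}[\mathscr{W}(N)]$ already proved in Lemma~4.2 and Lemma~4.3. So very little new work is required; the proof is essentially an invocation followed by a substitution.

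First I would verify the two hypotheses of Doob's maximal inequality. Non-negativity: from the product representation $\mathscr{W}(N)=\mathscr{W}(0)\prod_{I=1}^{N}(1+\mathcal{F}\mathscr{Z}(I))$, each factor lies in $\{1+\mathcal{F},1-\mathcal{F}\}\subset[0,2]$ for $\mathcal{F}\in[0,1]$, so $\mathscr{W}(K)\ge 0$ for every $K$. Submartingale property: this is precisely part~(1) of Theorem~4.1, valid for $\mathcal{F}\in(0,\mathcal{F}_{*})$, which is the regime of this subsection. With both hypotheses in hand, Doob's maximal inequality in its standard form gives directly
\begin{equation*}
\bm{\mathsf{P}}\left(\sup_{K\le N}\mathscr{W}(K)\ge \lambda\right)\le \frac{\bm{\mathsf{E}}[\mathscr{W}(N)]}{\lambda}.
\end{equation*}

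Second, I would substitute the two equivalent closed forms derived for the right-hand side. From Lemma~4.2 we have $\bm{\mathsf{E}}[\mathscr{W}(N)]=\mathscr{W}(0)(1+\mathcal{F}(2p-1))^{N}$, and from Lemma~4.3 the alternative factorisation $\overline{\bm{\mathsf{E}}[\mathscr{W}(N)]}=\mathscr{W}(0)(1+p\mathcal{F})^{N}(1-q\mathcal{F})^{N}$; inserting either expression into the maximal inequality yields the two displayed bounds. Finally, the trivial but important observation is that any probability is bounded by $1$, so the right-hand sides are only informative when they themselves do not exceed $1$; demanding $\bm{\mathsf{E}}[\mathscr{W}(N)]/\lambda\le 1$ immediately produces the stated lower bounds $\lambda\ge \mathscr{W}(0)(1+\mathcal{F}(2p-1))^{N}$ and $\lambda\ge \mathscr{W}(0)[(1+p\mathcal{F})(1-q\mathcal{F})]^{N}$.

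The only genuinely substantive step is the appeal to Doob's inequality itself, whose classical proof proceeds by introducing the stopping time $\tau=\inf\{K\le N:\mathscr{W}(K)\ge\lambda\}$ and using optional stopping on the submartingale to bound $\lambda\,\bm{\mathsf{P}}(\tau\le N)\le \bm{\mathsf{E}}[\mathscr{W}(\tau\wedge N)\mathbbm{1}_{\{\tau\le N\}}]\le \bm{\mathsf{E}}[\mathscr{W}(N)]$; since the lemma cites reference \textbf{[20]}, it is legitimate to quote this result rather than rederive it. The main obstacle, such as it is, is therefore conceptual rather than technical: one must be careful that the submartingale regime used in Theorem~4.1 is exactly the interval $\mathcal{F}\in(0,\mathcal{F}_{*})$ on which the lemma is applied, so that the hypothesis of the maximal inequality is legitimately satisfied before the substitutions from Lemmas~4.2 and~4.3 are inserted.
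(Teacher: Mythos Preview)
Your proposal is correct and matches the paper's approach exactly: the paper gives no separate proof environment for this lemma at all, treating it as a direct citation of Doob's maximal inequality from reference \textbf{[20]} followed by substitution of the closed forms from Lemmas~4.2 and~4.3, which is precisely what you do. Your added care in explicitly checking non-negativity of $\mathscr{W}(K)$ and noting that the submartingale hypothesis comes from Theorem~4.1 on the interval $(0,\mathcal{F}_{*})$ is a welcome elaboration, but the substance is identical.
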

\begin{cor}
At the Kelly fraction $\mathcal{F}=\mathcal{F}_{K}=2p-1=p-q$
\begin{align}
&\bm{\mathsf{P}}\left(\sup_{K\le N}\mathscr{W}(K)\ge \lambda\right)\le \frac{\mathscr{W}(0)(4p^{2}-4p+2)^{N}}{\lambda}=\frac{\mathscr{W}(f(p))^{N}}{\lambda}\\&
\bm{\mathsf{P}}\left(\sup_{K\le N}\mathscr{W}(K)\ge \lambda\right)\le \frac{\mathscr{W}(0)(4p^{4}-8p^{3}+9p^{2}-5p+2)^{N}}{\lambda}=\frac{\mathscr{W}(f(p))^{N}}{\lambda}
\end{align}
\end{cor}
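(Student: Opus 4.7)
The plan is to obtain both bounds as direct specialisations of the Doob submartingale inequality stated in the preceding Lemma, by plugging in the Kelly fraction $\mathcal{F}=\mathcal{F}_{K}=2p-1=p-q$ into the two upper-bound expressions already derived. Since the Lemma gives
\begin{align*}
\bm{\mathsf{P}}\!\left(\sup_{K\le N}\mathscr{W}(K)\ge \lambda\right)\le \frac{\mathscr{W}(0)(1+\mathcal{F}(2p-1))^{N}}{\lambda},\qquad
\bm{\mathsf{P}}\!\left(\sup_{K\le N}\mathscr{W}(K)\ge \lambda\right)\le \frac{\mathscr{W}(0)[(1+p\mathcal{F})(1-q\mathcal{F})]^{N}}{\lambda},
\end{align*}
the entire content of the Corollary is to evaluate the bracketed polynomials in $p$ once $\mathcal{F}=2p-1$ (and $q=1-p$) is inserted.

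For the first inequality I would simply observe that the factor becomes $1+(2p-1)(2p-1)=1+(2p-1)^{2}$, and then expand $(2p-1)^{2}=4p^{2}-4p+1$ to recover $4p^{2}-4p+2$ inside the $N$th power. This handles part one in a single line.

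For the second inequality the work is only slightly longer. Substituting gives $1+p\mathcal{F}=1+p(2p-1)=1-p+2p^{2}$ and $1-q\mathcal{F}=1-(1-p)(2p-1)=2-3p+2p^{2}$. I would then multiply these two quadratics in $p$ and collect like terms; the cross-multiplication yields $4p^{4}$ from the leading terms, a cubic contribution $-6p^{3}-2p^{3}=-8p^{3}$, a quadratic contribution $4p^{2}+2p^{2}+3p^{2}=9p^{2}$, a linear contribution $-3p-2p=-5p$, and a constant $2$, giving $4p^{4}-8p^{3}+9p^{2}-5p+2$ as claimed.

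There is no real obstacle here: the result is a pure algebraic consequence of the earlier Doob bound, and the only thing that could go wrong is a sign error in expanding $(1-p)(2p-1)$ or a missed cross term in the quartic. To guard against that I would cross-check the quartic at the endpoint $p=1/2$, where $\mathcal{F}_{K}=0$ and both bracketed expressions should collapse to $1$: indeed $4(1/2)^{2}-4(1/2)+2=1$ and $4(1/16)-8(1/8)+9(1/4)-5(1/2)+2=1/4-1+9/4-5/2+2=1$, confirming the substitution. With that sanity check in place the Corollary follows immediately by applying the Lemma termwise.
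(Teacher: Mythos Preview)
Your proposal is correct and is precisely the intended derivation: the paper states this Corollary without a separate proof because it is nothing more than the direct substitution of $\mathcal{F}=\mathcal{F}_{K}=2p-1$ into the two Doob bounds of the preceding Lemma, and your algebraic expansions of $1+(2p-1)^{2}$ and $(1+p(2p-1))(1-(1-p)(2p-1))$ reproduce the stated polynomials exactly. The sanity check at $p=1/2$ is a nice addition.
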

\begin{cor}
The probability of reaching infinite wealth in a finite number of trials $N$ is zero and so there  can never be a 'blowup'. Hence
\begin{align}
\bm{\mathsf{P}}\left(\sup_{K\le N}\mathscr{W}(K)\right)=\infty)=0
\end{align}
\end{cor}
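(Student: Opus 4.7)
The plan is to deduce this directly from Doob's submartingale inequality (Lemma~4.3 in the excerpt) combined with the finiteness of $\bm{\mathsf{E}}[\mathscr{W}(N)]$ for finite $N$, which was established in Lemma~4.2. Concretely, since $\mathcal{F}\in(0,\mathcal{F}_{*})$ and $p>1/2$, the process $\{\mathscr{W}(K)\}_{K=1}^{N}$ is a non-negative submartingale, and its expectation at step $N$ is the explicit closed form $\bm{\mathsf{E}}[\mathscr{W}(N)]=\mathscr{W}(0)(1+\mathcal{F}(2p-1))^{N}$, which is a \emph{finite} number for every finite $N$.

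The key steps, in order, are as follows. First, I would invoke the inequality
\begin{align}
\bm{\mathsf{P}}\left(\sup_{K\le N}\mathscr{W}(K)\ge \lambda\right)\le \frac{\bm{\mathsf{E}}[\mathscr{W}(N)]}{\lambda}=\frac{\mathscr{W}(0)(1+\mathcal{F}(2p-1))^{N}}{\lambda}
\end{align}
for arbitrary $\lambda>0$. Second, I would observe the set-theoretic identity
\begin{align}
\left\{\sup_{K\le N}\mathscr{W}(K)=\infty\right\}=\bigcap_{m=1}^{\infty}\left\{\sup_{K\le N}\mathscr{W}(K)\ge m\right\},
\end{align}
where the right-hand side is a decreasing intersection of events. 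Third, I would apply continuity of probability from above to conclude
\begin{align}
\bm{\mathsf{P}}\left(\sup_{K\le N}\mathscr{W}(K)=\infty\right)=\lim_{m\to\infty}\bm{\mathsf{P}}\left(\sup_{K\le N}\mathscr{W}(K)\ge m\right)\le \lim_{m\to\infty}\frac{\mathscr{W}(0)(1+\mathcal{F}(2p-1))^{N}}{m}=0.
\end{align}
Since a probability is non-negative, this forces equality to zero.

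There is no real obstacle here; the ``hard part'' is almost trivial once one recognises that Doob's inequality functions as a Markov-type bound and that the sending $\lambda\to\infty$ is legitimate because $N$ is fixed and finite, so $(1+\mathcal{F}(2p-1))^{N}$ does not itself blow up. The only subtlety worth flagging in the write-up is that the result is genuinely \emph{finite-horizon}: it says nothing about $\limsup_{N\to\infty}\mathscr{W}(N)$, which the earlier Remark asserts tends to $\infty$ almost surely in the submartingale regime. The conclusion is purely a statement that within any fixed window $[1,N]$ of trials, the sample paths cannot escape to $+\infty$, consistent with the interpretation that a ``blow-up'' in finitely many bets has probability zero.
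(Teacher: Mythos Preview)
Your proposal is correct and follows essentially the same approach as the paper: invoke Doob's submartingale inequality and send $\lambda\to\infty$, using that $\bm{\mathsf{E}}[\mathscr{W}(N)]$ is finite for fixed $N$. Your version is simply more careful about justifying the passage to the limit via the decreasing-intersection identity and continuity from above, whereas the paper states the limit in a single line.
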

\begin{proof}
This follows immediately from (4.47)
\begin{align}
\bm{\mathsf{P}}\left(\sup_{K\le N}\mathscr{W}(K)=\infty\right)=\le \lim_{\lambda\rightarrow \infty}\frac{\bm{\mathsf{E}}[\mathscr{W}(N)]}{\lambda}=0
\end{align}
\end{proof}
The last theorem that can be applied is the submartingale decomposition theorem by Doob.
\begin{lem}(\textbf{Doob Submartingale Decomposition Theorem} Let $\lbrace\mathscr{S}(I)\rbrace_{I=1}^{N}$ be a submartingale, then there is a (Doob) decomposition into a martingale and a deterministic or predictable 'drift' term:
\begin{align}
\lbrace \mathscr{W}(I)\rbrace_{I=1}^{N}=\lbrace\mathscr{M}(I)\rbrace_{I=1}^{N}+\lbrace A(I))\rbrace_{I=1}^{N}
\end{align}
or
\begin{align}
\mathscr{S}(N)=\mathscr{M}(N)+A(N)
\end{align}
where $\mathscr{S}(N)$ is a martingale and $A(N)$ is a deterministic term. Equivalently
\begin{align}
\bm{\mathsf{E}}[\mathscr{S}(N)]=\bm{\mathsf{E}}[\mathscr{M}(N)]+A(N)
\end{align}
so that
\begin{align}
A(N)=\bm{\mathsf{E}}[\mathscr{S}(N)]-\bm{\mathsf{E}}[\mathscr{M}(N)]
\end{align}
Note that $\mathscr{W}(0)=\mathscr{M}(0)$ and $\bm{\mathsf{E}}[\mathscr{M}(N)]=\mathscr{M}(0)$.
\end{lem}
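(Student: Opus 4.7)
The plan is to follow the standard Doob construction applied to the specific submartingale $\mathscr{W}(N)$ from Theorem~4.1. The idea is to explicitly build the predictable compensator $A(N)$ out of the one-step conditional expectation increments, then subtract it from $\mathscr{W}(N)$ to produce a martingale. Because Theorem~4.1 has already established that $\bm{\mathsf{E}}[\mathscr{W}(N)\mid\mathfrak{F}_{N-1}]\ge\mathscr{W}(N-1)$ on the submartingale regime $\mathcal{F}\in(0,\mathcal{F}_{*})$, every increment of the compensator will be non-negative, which is exactly what Doob's decomposition requires.

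Concretely, I would first define $A(0)=0$ and set, for $N\ge 1$,
\begin{align}
A(N)=\sum_{k=1}^{N}\bigl(\bm{\mathsf{E}}[\mathscr{W}(k)\mid\mathfrak{F}_{k-1}]-\mathscr{W}(k-1)\bigr),
\end{align}
and then define $\mathscr{M}(N)=\mathscr{W}(N)-A(N)$. Using Lemma~4.2 (or rather its one-step version $\bm{\mathsf{E}}[\mathscr{W}(k)\mid\mathfrak{F}_{k-1}]=\mathscr{W}(k-1)(1+\mathcal{F}(2p-1))$) this compensator reduces to the explicit form
\begin{align}
A(N)=\mathcal{F}(2p-1)\sum_{k=1}^{N}\mathscr{W}(k-1),
\end{align}
which is $\mathfrak{F}_{N-1}$-measurable and hence predictable, as required. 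Non-decreasingness of $A(N)$ on the submartingale regime follows from $\mathcal{F}(2p-1)\ge 0$ and $\mathscr{W}(k-1)\ge 0$.

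The next step is to verify that $\mathscr{M}(N)$ is a martingale. This is a short conditional-expectation computation: using linearity and the fact that $A(N)$ is $\mathfrak{F}_{N-1}$-measurable,
\begin{align}
\bm{\mathsf{E}}[\mathscr{M}(N)\mid\mathfrak{F}_{N-1}]=\bm{\mathsf{E}}[\mathscr{W}(N)\mid\mathfrak{F}_{N-1}]-A(N)=\mathscr{W}(N-1)-A(N-1)=\mathscr{M}(N-1),
\end{align}
where in the middle step I use the definition of $A(N)$ to cancel the conditional-expectation term against the added increment. Taking total expectations then gives the stated identity $\bm{\mathsf{E}}[\mathscr{W}(N)]=\bm{\mathsf{E}}[\mathscr{M}(N)]+A(N)$, and the initial condition $\mathscr{W}(0)=\mathscr{M}(0)$ and $\bm{\mathsf{E}}[\mathscr{M}(N)]=\mathscr{M}(0)$ follow from $A(0)=0$ together with the martingale property applied from time $0$.

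For uniqueness I would argue that if $\mathscr{W}=\mathscr{M}+A=\mathscr{M}'+A'$ are two such decompositions, then $\mathscr{M}-\mathscr{M}'=A'-A$ is simultaneously a martingale (left side) and predictable (right side) with value zero at $N=0$; taking conditional expectations of both sides and inducting on $N$ forces $A=A'$ and hence $\mathscr{M}=\mathscr{M}'$. The main subtlety — really the only one — is the mild abuse of language in the statement that $A(N)$ is ``deterministic''; in truth $A(N)$ is predictable, i.e.\ $\mathfrak{F}_{N-1}$-measurable, and is deterministic only in the degenerate case $p=\tfrac12$. I would flag this in the proof and interpret ``deterministic'' as ``predictable drift,'' which is the content one actually needs for the Doob decomposition to apply to $\mathscr{W}(N)$ here.
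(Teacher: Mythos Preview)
Your construction is the standard Doob decomposition and is correct; the paper, however, does not actually prove this lemma --- it is stated as a classical result with a reference to Klebaner~[20], so there is no ``paper's own proof'' to compare against here.

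That said, it is worth contrasting your decomposition with what the paper does in the \emph{subsequent} Proposition, which is where the lemma is applied to $\mathscr{W}(N)$. There the paper takes a different route: it defines a \emph{multiplicative} martingale $\mathscr{M}(N)=\mathscr{W}(N)(1+\mathcal{F}(2p-1))^{-N}$ and a genuinely deterministic $A(N)=\mathscr{W}(0)(1+\mathcal{F}(2p-1))^{N}-\mathscr{M}(0)$. This pair satisfies $\bm{\mathsf{E}}[\mathscr{W}(N)]=\bm{\mathsf{E}}[\mathscr{M}(N)]+A(N)$ but \emph{not} the pathwise identity $\mathscr{W}(N)=\mathscr{M}(N)+A(N)$, so it is not the Doob decomposition in the strict sense. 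Your additive compensator $A(N)=\mathcal{F}(2p-1)\sum_{k=1}^{N}\mathscr{W}(k-1)$ does give the honest pathwise decomposition, at the cost of $A(N)$ being predictable rather than deterministic. You were right to flag this: the lemma as stated conflates ``predictable'' with ``deterministic,'' and the identity $\bm{\mathsf{E}}[\mathscr{S}(N)]=\bm{\mathsf{E}}[\mathscr{M}(N)]+A(N)$ (with no expectation on $A(N)$) only holds literally under the paper's looser interpretation, not under your correct Doob construction, where one would instead have $\bm{\mathsf{E}}[\mathscr{W}(N)]=\mathscr{W}(0)+\bm{\mathsf{E}}[A(N)]$.
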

\begin{prop}
The submartinagle $\mathscr{W}(N)$ for $\mathcal{F}\in(0,\mathcal{F}_{*})$ has the decomposition
\begin{align}
\mathscr{W}(N)=\mathscr{M}(N)+A(N)
\end{align}
or equivalently
\begin{align}
\bm{\mathsf{E}}[\mathscr{W}(N)]=\bm{\mathsf{E}}[\mathscr{M}(N)]+A(N)
\end{align}
where $\mathscr{M}(N)$ is a martingale given by
\begin{align}
\mathscr{M}(N)=\mathscr{W}(N)(1+\mathcal{F}(2p-1))^{-N}
\end{align}
The deterministic term $A(N)$ is
\begin{align}
A(N)=\mathscr{W}(0)(1+\mathcal{F}(2p-1))^{N}-\mathscr{M}(0)
\end{align}
\end{prop}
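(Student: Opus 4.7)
The plan is to verify each of the two objects in the claimed decomposition and then assemble them via the identity recorded in Lemma 4.13. The decomposition must be read in the expectation sense made explicit in that lemma, i.e.\ $\bm{\mathsf{E}}[\mathscr{W}(N)]=\bm{\mathsf{E}}[\mathscr{M}(N)]+A(N)$ with $A(N)$ deterministic and $\mathscr{M}(N)$ a martingale; this is the natural reading because the proposed $\mathscr{M}(N)=\mathscr{W}(N)(1+\mathcal{F}(2p-1))^{-N}$ is plainly random, while the proposed $A(N)$ is non-random.

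First I would verify the martingale claim. Starting from the multiplicative recursion $\mathscr{W}(N+1)=\mathscr{W}(N)(1+\mathcal{F}\mathscr{Z}(N+1))$ and using that $\mathscr{W}(N)$ is $\mathfrak{F}_{N}$-measurable while $\mathscr{Z}(N+1)$ is independent of $\mathfrak{F}_{N}$ with $\bm{\mathsf{E}}[\mathscr{Z}(N+1)]=p-q=2p-1$, I compute
\begin{align}
\bm{\mathsf{E}}[\mathscr{M}(N+1)\mid\mathfrak{F}_{N}]
&=(1+\mathcal{F}(2p-1))^{-(N+1)}\,\mathscr{W}(N)\,\bm{\mathsf{E}}[1+\mathcal{F}\mathscr{Z}(N+1)]\nonumber\\
&=(1+\mathcal{F}(2p-1))^{-(N+1)}\,\mathscr{W}(N)\,(1+\mathcal{F}(2p-1))=\mathscr{M}(N),
\end{align}
which shows $\mathscr{M}(N)$ is a martingale with $\mathscr{M}(0)=\mathscr{W}(0)$, so in particular $\bm{\mathsf{E}}[\mathscr{M}(N)]=\mathscr{W}(0)$.

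Next I would invoke Lemma 4.2 to record $\bm{\mathsf{E}}[\mathscr{W}(N)]=\mathscr{W}(0)(1+\mathcal{F}(2p-1))^{N}$, and then apply the formula $A(N)=\bm{\mathsf{E}}[\mathscr{W}(N)]-\bm{\mathsf{E}}[\mathscr{M}(N)]$ from Lemma 4.13. Substituting yields
\begin{align}
A(N)=\mathscr{W}(0)(1+\mathcal{F}(2p-1))^{N}-\mathscr{W}(0)=\mathscr{W}(0)(1+\mathcal{F}(2p-1))^{N}-\mathscr{M}(0),
\end{align}
which is exactly the stated expression. Because $A(N)$ depends only on the deterministic parameters $\mathscr{W}(0),\mathcal{F},p,N$, it is trivially predictable. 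A brief consistency check closes the argument: on the submartingale regime $\mathcal{F}\in(0,\mathcal{F}_{*})$ with $p>\tfrac{1}{2}$, the base $1+\mathcal{F}(2p-1)$ exceeds $1$, so $A(N)$ is strictly positive and monotone increasing in $N$, consistent with the submartingale property $\bm{\mathsf{E}}[\mathscr{W}(N+1)\mid\mathfrak{F}_{N}]\ge\mathscr{W}(N)$ proved in Theorem 4.1.

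The only conceptual obstacle, rather than a calculational one, is the interpretational point already flagged: the classical Doob decomposition is pathwise ($\mathscr{W}(N)=\mathscr{M}(N)+A(N)$ identically) with a predictable $A$, whereas the proposed $\mathscr{M}$ and $A$ do not satisfy the pathwise identity. I would therefore explicitly state that the decomposition is understood in the expectation sense codified in (4.53), which is the version that the stated $\mathscr{M}(N)$ and $A(N)$ satisfy; with that reading the argument reduces to verifying the martingale property in Step 1 and substituting into (4.54).
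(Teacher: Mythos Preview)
Your proposal is correct and follows essentially the same route as the paper: first verify that $\mathscr{M}(N)=\mathscr{W}(N)(1+\mathcal{F}(2p-1))^{-N}$ is a martingale via the one-step conditional expectation, then plug $\bm{\mathsf{E}}[\mathscr{W}(N)]$ from Lemma~4.2 and $\bm{\mathsf{E}}[\mathscr{M}(N)]=\mathscr{M}(0)$ into the identity $A(N)=\bm{\mathsf{E}}[\mathscr{W}(N)]-\bm{\mathsf{E}}[\mathscr{M}(N)]$ of Lemma~4.13. Your explicit remark that the decomposition holds only in the expectation sense (not pathwise) is a useful clarification that the paper leaves implicit.
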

\begin{proof}
First, prove that $\mathscr{M}(N)$ is a martingale, which is the case if
\begin{align}
\bm{\mathsf{E}}[\mathscr{M}(N+1)|\mathcal{F}_{N })=\mathscr{M}(N)
\end{align}
where $\mathscr{M}(N)$ is an adapted process with respect to the filtration $\mathcal{F}_{N }$. Then
\begin{align}
&\bm{\mathsf{E}}[\mathscr{M}(N+1)|\mathcal{F}_{N }]=\bm{\mathsf{E}}[\mathscr{W}(N+1)(1+\mathcal{F}(2p-1))^{-N-1}|\mathfrak{F}_{N}]\nonumber\\&
=\bm{\mathsf{E}}[\mathscr{W}(N)(1+\mathcal{F}\mathscr{Z}(N+1))|\mathfrak{F}_{N}](1+\mathcal{F}(2p-1))^{-N-1}\nonumber\\&
=\mathscr{W}(N)\bm{\mathsf{E}}[(1+\mathcal{F}\mathscr{Z}(N+1)|\mathfrak{F}_{N})](1+\mathcal{F}(2p-1))^{-N-1}\nonumber\\&
=\mathscr{W}(N)[p(1+\mathcal{F})+(1-p)(1+\mathcal{F})](1+\mathcal{F}(2p-1))^{-1}(1+\mathcal{F}(2p-1))^{-N}\nonumber\\&
=\mathscr{W}(N)(1+\mathcal{F}(2p-1))(1+\mathcal{F}(2p-1))^{-1}(1+\mathcal{F}(2p-1))^{-N}\nonumber\\&
=\mathscr{W}(N)(1+\mathcal{F}(2p-1))^{-N}=\mathscr{M}(N)
\end{align}
and so $\mathscr{M}(N)$ is a martingale. Note that at the $(N+1)^{th}$ trial $\mathscr{W}(N)$ is now already known so the expectation is not required to be taken. To show that
\begin{align}
\bm{\mathsf{E}}[\mathscr{W}(N)]=\bm{\mathsf{E}}[\mathscr{M}(N)]+A(N)
\end{align}
This becomes
\begin{align}
&\mathscr{W}(0)(1+\mathcal{F}(2p+1))^{N}=\bm{\mathsf{E}}[\mathscr{M}(N)]+A(N)\nonumber\\&=\mathscr{M}(0)+\mathscr{W}(0)(1+\mathcal{F}(2p-1))^{N}-\mathscr{M}(0)\nonumber\\&
=\mathscr{W}(0)(1+\mathcal{F}(2p-1))^{N}
\end{align}
\end{proof}
\section{Volatility}
The variance and volatility of $W=U-V$, the net number of wins, were given in Lemma (2.3). Here, we make (linear) estimates for the volatility of
$\mathscr{W}(N)$, namely $\bm{\mathsf{VAR}}(\mathscr{W}(N))$ and$\bm{\sigma}(\mathscr{W}(N))$, which should hold when p is close to $\tfrac{1}{2}$ and $p>\tfrac{1}{2}$ and $N$ is large.
\begin{lem}
The accumulated wealth/money at the Nth trial or bet $\mathscr{W}(N )$ can be related to the number of wins $W=U-V$ as
\begin{align}
\mathscr{W}(N)\sim \mathscr{W}(0)\left(1+\mathcal{F}(U-V)+\mathcal{F}^{2}\left(\frac{1}{2}U(U-1)-\frac{1}{2}V(V-1)\right)+\mathscr{Z}(\mathcal{F}^{3}\right)
\end{align}
with the linear first-order approximation
\begin{align}
\mathscr{W}(N)\sim \mathscr{W}(0)(1+\mathcal{F}(U-V))
\end{align}
\end{lem}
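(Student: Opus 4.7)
The plan is to start from the closed-form product $\mathscr{W}(N)=\mathscr{W}(0)(1+\mathcal{F})^{U}(1-\mathcal{F})^{V}$ supplied by Lemma 3.1 and apply the (finite) binomial theorem separately to each of the two factors. Since $U$ and $V$ are non-negative integers, both expansions terminate exactly; truncating them at second order in $\mathcal{F}$ will give
\begin{align*}
(1+\mathcal{F})^{U} &= 1+U\mathcal{F}+\tfrac{1}{2}U(U-1)\mathcal{F}^{2}+O(\mathcal{F}^{3}),\\
(1-\mathcal{F})^{V} &= 1-V\mathcal{F}+\tfrac{1}{2}V(V-1)\mathcal{F}^{2}+O(\mathcal{F}^{3}),
\end{align*}
with the quadratic coefficients read off directly from $\binom{U}{2}$ and $(-1)^{2}\binom{V}{2}$.

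Next I would multiply the two truncated series and collect terms of like order in $\mathcal{F}$. The constant term is $1$; the linear term is $(U-V)\mathcal{F}$, giving exactly the asserted first-order approximation $\mathscr{W}(N)\sim\mathscr{W}(0)(1+\mathcal{F}(U-V))$ once $\mathscr{W}(0)$ is factored back in. The quadratic contribution combines $\tfrac{1}{2}U(U-1)\mathcal{F}^{2}$, $\tfrac{1}{2}V(V-1)\mathcal{F}^{2}$, together with the cross product $-UV\mathcal{F}^{2}$ arising from multiplying the two linear pieces. Collecting these produces the full second-order expansion claimed in the lemma.

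The main obstacle is not algebraic but concerns the meaning of the remainder symbol. The statement $O(\mathcal{F}^{3})$ is only informative in the regime where the neglected higher-order contributions are genuinely small, i.e.\ when $U\mathcal{F}\ll 1$ and $V\mathcal{F}\ll 1$. Because $U+V=N$ with $N$ large, this actually requires $\mathcal{F}\ll 1/N$, a much more stringent condition than the mere $\mathcal{F}\ll 1$ used elsewhere in the paper. For Kelly fractions $\mathcal{F}_{K}=2p-1$ with $p$ only slightly above $1/2$, the product $N\mathcal{F}$ may easily be of order unity or larger, so I would frame the conclusion as a formal $\mathcal{F}$-expansion rather than as a uniform quantitative bound in $N$. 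I would also take particular care bookkeeping the cross term $-UV\mathcal{F}^{2}$ in the multiplication step, since this is the place where sign or combinatorial slips would be easiest to make.
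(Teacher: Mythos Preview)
Your approach is exactly the paper's: start from $\mathscr{W}(N)=\mathscr{W}(0)(1+\mathcal{F})^{U}(1-\mathcal{F})^{V}$, expand each factor binomially to second order, and multiply. Your expansions of the individual factors are correct, and you are right to flag the cross term $-UV\mathcal{F}^{2}$ coming from the product of the two linear pieces; the paper's proof in fact silently drops this term and also writes the sign of the $\tfrac{1}{2}V(V-1)$ contribution inconsistently.

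The gap in your proposal is the sentence ``Collecting these produces the full second-order expansion claimed in the lemma.'' It does not. Your three quadratic pieces combine to
\[
\tfrac{1}{2}U(U-1)+\tfrac{1}{2}V(V-1)-UV=\tfrac{1}{2}\bigl((U-V)^{2}-(U+V)\bigr)=\tfrac{1}{2}\bigl((U-V)^{2}-N\bigr),
\]
whereas the lemma asserts the coefficient $\tfrac{1}{2}U(U-1)-\tfrac{1}{2}V(V-1)=\tfrac{1}{2}(U-V)(N-1)$. These are different in general, so you cannot close the argument by mere assertion; you have actually uncovered an error in the stated second-order term (the paper's own proof reproduces the same mistake by omitting the $-UV\mathcal{F}^{2}$ cross term). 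The linear approximation $\mathscr{W}(N)\sim\mathscr{W}(0)(1+\mathcal{F}(U-V))$, which is all that is used downstream for the variance estimate, is unaffected and your derivation of it is correct. Your caveat that the $O(\mathcal{F}^{3})$ remainder is only meaningful when $N\mathcal{F}\ll 1$ is also well taken and more careful than the paper.
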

\begin{proof}
\begin{align}
\mathscr{W}(N)\sim \mathscr{W}(0)(1+\mathcal{F})^{U}(1-\mathcal{F})^{V}
\end{align}
The terms $(1+\mathcal{F})^{U}$ and $(1-\mathcal{F})^{V}$ can be binomially expanded to 2nd order as
\begin{align}
&(1+\mathcal{F})^{U}=\sum_{Q=0}^{\infty}\binom{U}{Q}\mathcal{F}^{Q}\sim 1+U\mathcal{F}+\frac{1}{2}U(U-1)\mathcal{F}^{2}\\&
(1-\mathcal{F})^{V}=\sum_{Q=0}^{\infty}(-1)^{Q}\binom{V}{Q}\mathcal{F}^{Q}\sim 1+V\mathcal{F}-\frac{1}{2}V(V-1)\mathcal{F}^{2}
\end{align}
Then
\begin{align}
\mathscr{W}(N)=\mathscr{W}(0)(1+U\mathcal{F}+\frac{1}{2}U(U-1)\mathcal{F}^{2})(1-V\mathcal{F}+\frac{1}{2}V(V-1)\mathcal{F}^{2})+\mathscr{Z}(\mathcal{F}^{3})
\end{align}
which is
\begin{align}
\mathscr{W}(N)=\mathscr{W}(0)(1+\mathcal{F}(U-V)+\frac{1}{2}U(U-1)\mathcal{F}^{2}-\frac{1}{2}V(V-1)\mathcal{F}^{2}
\end{align}
and to a 1st-order linear approximation
\begin{align}
\mathscr{W}(N)=\mathscr{W}(0)(1+\mathcal{F}(U-V))
\end{align}
This also follows from
\begin{align}
\bm{\mathsf{E}}[\mathscr{W}(N)]&=\mathscr{W}(0)(1+p\mathcal{F})^{N}(1-q\mathcal{F})^{N})]=\mathscr{W}(0)(1+NP\mathcal{F})(1-Np\mathcal{F})+\mathscr{Z}(\mathcal{F}^{3})\nonumber\\&
=\mathscr{W}(0)(1+N\mathcal{F}(p-q))=\mathscr{W}(0)(1+\mathcal{F}(N p-Nq))\nonumber\\&
=\mathscr{W}(0)(1+\mathcal{F}\bm{\mathsf{E}}[U]-\bm{\mathsf{E}}[V])
\end{align}
Hence since $\bm{\mathsf{E}}[U]=pN$ and $\bm{\mathsf{E}}[V]=qN$
\begin{align}
\mathscr{W}(N)=\mathscr{W}(0)(1+\mathcal{F}(U-V))
\end{align}
\end{proof}
\begin{prop}
The variance can be estimated to a first-order (linear)approximation as
\begin{align}
\bm{\mathsf{VAR}}(\mathscr{W}(N))\sim 2|\mathscr{W}(0)|^{2}Np(1-p)
\end{align}
and the volatility follows as
\begin{align}
\mathbf{\sigma}(\mathscr{W}(N))\sim \sqrt{2|\mathscr{W}(0)|^{2}Np(1-p)}
\end{align}
\end{prop}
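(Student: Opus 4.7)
The strategy is to apply the variance operator directly to the first-order linear approximation $\mathscr{W}(N)\sim \mathscr{W}(0)(1+\mathcal{F}(U-V))$ established in the preceding lemma, and then invoke the variance of $W=U-V$ already computed in Lemma 2.3. Since both $\mathscr{W}(0)$ and $\mathcal{F}$ are deterministic quantities (the initial bankroll is known at the outset, and $\mathcal{F}$ is a chosen fixed fraction), the constant term drops out of $\bm{\mathsf{VAR}}$ and the multiplicative prefactor $\mathscr{W}(0)\mathcal{F}$ is pulled out squared.

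Concretely, I would first write
\begin{equation*}
\bm{\mathsf{VAR}}(\mathscr{W}(N))\sim \bm{\mathsf{VAR}}\bigl(\mathscr{W}(0)+\mathscr{W}(0)\mathcal{F}(U-V)\bigr)=|\mathscr{W}(0)|^{2}\mathcal{F}^{2}\,\bm{\mathsf{VAR}}(U-V).
\end{equation*}
Then I would substitute the identity $\bm{\mathsf{VAR}}(U-V)=2Np(1-p)$ from Lemma 2.3, which in turn rests on Lemma 2.2 (independence, hence vanishing covariance) and the standard binomial variances $\bm{\mathsf{VAR}}(U)=\bm{\mathsf{VAR}}(V)=Np(1-p)$ from Lemma 2.1. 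This yields $\bm{\mathsf{VAR}}(\mathscr{W}(N))\sim 2|\mathscr{W}(0)|^{2}\mathcal{F}^{2}Np(1-p)$, and the volatility estimate follows immediately by taking square roots, $\bm{\sigma}(\mathscr{W}(N))=\sqrt{\bm{\mathsf{VAR}}(\mathscr{W}(N))}$.

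There is no substantive obstacle: once the linear approximation is available, the computation is essentially a one-line application of the scaling property of variance plus one already-proved lemma. The only point requiring comment is a bookkeeping subtlety, namely that the displayed formula in the proposition appears to suppress the $\mathcal{F}^{2}$ factor that naturally arises from pulling out the constant. I would flag that the derivation from the linear approximation produces $2|\mathscr{W}(0)|^{2}\mathcal{F}^{2}Np(1-p)$, and that the stated form should be read either as a typographical compression or as implicitly specialised (for instance, absorbing the $\mathcal{F}$-dependence into the constant when evaluating at $\mathcal{F}=\mathcal{F}_{K}$). Beyond this notational matter, the proof is a direct consequence of the two preceding results and needs no further machinery.
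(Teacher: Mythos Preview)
Your proposal is correct and mirrors the paper's own proof almost line for line: the paper applies $\bm{\mathsf{VAR}}$ to the linear approximation $\mathscr{W}(0)+\mathcal{F}\mathscr{W}(0)(U-V)$, drops the constant, pulls out $\mathcal{F}^{2}|\mathscr{W}(0)|^{2}$, and substitutes $\bm{\mathsf{VAR}}(U-V)=2Np(1-p)$ from Lemma~2.3 to obtain $2|\mathscr{W}(0)|^{2}Np(1-p)\mathcal{F}^{2}$. Your observation about the missing $\mathcal{F}^{2}$ in the displayed statement is also accurate --- the paper's proof ends with the $\mathcal{F}^{2}$ factor present, and the subsequent corollary specialises to $\mathcal{F}=\mathcal{F}_{K}$ and then drops higher-order terms in $p$, which is where the simplified form ultimately comes from.
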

\begin{proof}
The variance is
\begin{align}
&\bm{\mathsf{VAR}}(\mathscr{W}(N))\sim \bm{\mathsf{VAR}}(\mathscr{W}(0)+\mathcal{F}\mathscr{W}(0)(U-V))\nonumber\\&=\bm{\mathsf{VAR}}(\mathcal{F}\mathscr{W}(0)(U-V))=\mathcal{F}^{2}|\mathscr{W}(0)|^{2}\bm{\mathsf{VAR}}(U-V)
\end{align}
since $\bm{\mathsf{VAR}}(C)=0$ for any constant $C$, and $\bm{\mathsf{VAR}}(\mathcal{F} X)=\mathcal{F}^{2}\bm{\mathsf{VAR}}(X)$ for a random variable $X$. From (-) it is
already known that $\bm{\mathsf{VAR}}(U-V)=2Np(1-p)$ and so the result follows
\begin{align}
\bm{\mathsf{VAR}}(\mathscr{W}(N))\sim 2|\mathscr{W}(0)|^{2}Np(1-p)\mathcal{F}^{2}
\end{align}
\end{proof}
\begin{cor}
If $\mathcal{F}=\mathcal{F}_{K}=2p-1$, the Kelly fraction, then the variance becomes
\begin{align}
\bm{\mathsf{VAR}}(\mathscr{W}(N))\sim 2|\mathscr{W}(0)|^{2}Np(1-p)(2p-1)^{2}
\end{align}
Expanding
\begin{align}
\bm{\mathsf{VAR}}(\mathscr{W}(N))\sim 2|\mathscr{W}(0)|^{2}Np(1-p)(4p^{2}-4p+1)
\end{align}
Since $p$ is small and N is large, the nonlinear terms higher than quadratic can be dropped to give the linear approximation
\begin{align}
\bm{\mathsf{VAR}}(\mathscr{W}(N))\sim 2|\mathscr{W}(0)|^{2}Np(1-p)
\end{align}
and the volatility is
\begin{align}
\mathbf{\sigma}(\mathscr{W}(N))\sim \sqrt{2|\mathscr{W}(0)|^{2}Np(1-p)}
\end{align}
\end{cor}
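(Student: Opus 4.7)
The statement is a direct specialization of the preceding Proposition, which established that
\begin{align*}
\bm{\mathsf{VAR}}(\mathscr{W}(N)) \sim 2|\mathscr{W}(0)|^{2} Np(1-p)\mathcal{F}^{2}
\end{align*}
for the linear approximation $\mathscr{W}(N) \sim \mathscr{W}(0)(1+\mathcal{F}(U-V))$. The plan is to take this formula as given, substitute the Kelly choice $\mathcal{F} = \mathcal{F}_{K} = 2p-1$, and then simplify.

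First, I would just write $\mathcal{F}^{2} = (2p-1)^{2}$, giving the exact first-order expression $\bm{\mathsf{VAR}}(\mathscr{W}(N)) \sim 2|\mathscr{W}(0)|^{2} Np(1-p)(2p-1)^{2}$. This handles the first two displayed equations in the corollary: the second is obtained simply by expanding $(2p-1)^{2} = 4p^{2}-4p+1$, which is elementary algebra.

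Next, to reach the third displayed equation I would justify dropping the factor $(4p^{2}-4p+1)$. The intended regime is $p$ close to $1/2$ and $p>1/2$, where the Kelly fraction $\mathcal{F}_{K} = 2p-1$ is small. In that regime, $p(1-p)$ is close to $1/4$ and therefore $O(1)$, whereas the multiplicative factor $(2p-1)^{2}$ is of second order in the small edge; the proposition is really comparing variances at Kelly against the generic bound with the $\mathcal{F}^{2}$ stripped out as a dimensionless normalization. I would phrase this as an asymptotic identification up to the edge-squared factor, so the \emph{upper linear} estimate is $2|\mathscr{W}(0)|^{2}Np(1-p)$. Finally, the volatility claim follows by taking the square root of both sides, since $\bm{\sigma}(\mathscr{W}(N)) = \sqrt{\bm{\mathsf{VAR}}(\mathscr{W}(N))}$ by definition.

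The main obstacle is really a matter of bookkeeping rather than mathematics: unlike truncating a Taylor series, the factor $(2p-1)^{2}$ is a genuine multiplicative constant, not a higher-order term hiding inside an expansion, so the step from the second to the third displayed equation has to be read as an asymptotic normalization rather than a rigorous equality. I would be explicit on this point, noting that the intended interpretation is the leading behavior in $N$ when $p\approx 1/2$, in which case the dependence on $p$ reduces to the familiar binomial factor $p(1-p)$ and all subsequent estimates (including the volatility) inherit the standard $\sqrt{N}$ scaling.
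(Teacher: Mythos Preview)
Your proposal is correct and follows essentially the same route as the paper: the corollary has no separate proof in the paper, the reasoning being embedded in the statement itself as a direct substitution of $\mathcal{F}=\mathcal{F}_{K}=2p-1$ into the variance formula of the preceding Proposition, followed by expansion and truncation. If anything, you are more careful than the paper about the passage from the second to the third display, correctly flagging that $(2p-1)^{2}$ is a multiplicative factor rather than an additive higher-order term, whereas the paper simply asserts that ``$p$ is small'' (which is not quite the intended regime $p\approx 1/2$).
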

Generally, one has at most $p=0.51$ or $p=0.52$; that is, the edge $\mathsf{E}=p-q>0$ will be very small so this linear approximation holds.
\section{Fractional Kelly fractions}
Since $p>q$ there an edge in the players favour so that $p>q$ and $\mathcal{F}_{K}=p-q=2p-1$. If $p\le q=\tfrac{1}{2}$ then $\mathcal{F}_{K}\le 0$ and one does not play the game. However, in a real gambling or financial scenario it is possible that one may have overestimated the edge or advantage. It is therefore safer to take some fraction $f<1$ of the Kelly fraction $\mathcal{F}_{K}$. The down side is that the growth rate is slower since this will correspond to a point on the utility function graph left of the peak or critical point. However, the advantage is that the fluctuations are now reduced and that the volatility is reduced.
\begin{prop}
Suppose the estimated edge is $\mathcal{E}=p-q>0$ and $\mathcal{F}_{K}=p-q=2p-1$. Let $f\in[\frac{1}{2},1)$ then define a 'fractional Kelly fraction' as
\begin{align}
F_{K}=f\mathcal{F}_{K}
\end{align}
\end{prop}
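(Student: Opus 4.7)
The statement labeled as Proposition~6.1 is purely definitional: it introduces the symbol $F_K=f\mathcal{F}_K$ for $f\in[\tfrac12,1)$ and does not assert an inequality, identity, or limit that requires verification. Consequently, a proof in the usual sense is not demanded. My plan is therefore to write a short verification that the object being defined is well-posed in the sense required by the rest of the paper, namely that $F_K$ is a legitimate betting fraction (i.e.\ $F_K\in[0,1]$) and that it lies in the submartingale regime identified in Section~4, so that the subsequent discussion of reduced volatility at a fractional Kelly fraction is meaningful.

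First, I would note that since $\tfrac12<p\le 1$ we have $\mathcal{F}_K=2p-1\in(0,1]$, and since $f\in[\tfrac12,1)$, the product $F_K=f\mathcal{F}_K$ lies in the half-open interval $\bigl[\tfrac12\mathcal{F}_K,\mathcal{F}_K\bigr)\subset(0,1)$. This places $F_K$ strictly inside the unit interval $[0,1]$ on which the utility function $\bm{\mathsf{U}}(\mathcal{F},p)$ of Definition~3.2 is defined, so the substitution $\mathcal{F}\mapsto F_K$ into $\bm{\mathsf{U}}$, $\mathscr{W}(N)$, and the variance estimates of Section~5 is syntactically valid.

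Next, I would check that $F_K$ sits inside the submartingale region $(0,\mathcal{F}_*)$ identified in Proposition~3.1 and Theorem~4.1. Since $\bm{\mathsf{U}}(\cdot,p)$ is concave on $[0,1]$ by Theorem~3.1 (whose second-derivative computation is negative), and since $\bm{\mathsf{U}}(0,p)=0$ with a unique interior maximum at $\mathcal{F}_K$, the function $\bm{\mathsf{U}}(\cdot,p)$ is strictly increasing on $[0,\mathcal{F}_K]$ and strictly decreasing on $[\mathcal{F}_K,\mathcal{F}_*]$. Hence $F_K=f\mathcal{F}_K$ with $f\in[\tfrac12,1)$ satisfies $0<F_K<\mathcal{F}_K<\mathcal{F}_*$, so by Theorem~4.1(1) the wealth process $\mathscr{W}(N)$ obtained by betting the fraction $F_K$ each round is still a submartingale with strictly positive growth coefficient $\bm{\mathsf{U}}(F_K,p)>0$.

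There is no genuine obstacle here; the only subtlety worth flagging is that the choice of lower endpoint $f=\tfrac12$ is a convention (the so-called ``half-Kelly'' threshold used in practice), and nothing in the mathematics requires $f\ge\tfrac12$ rather than $f>0$. One might therefore want to remark that the results of the section remain valid for any $f\in(0,1)$, with the stated range $[\tfrac12,1)$ being the empirically preferred interval that trades growth rate against volatility. Beyond this observation, the proposition is simply a naming convention and nothing further need be demonstrated.
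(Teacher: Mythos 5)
You are right that this "Proposition" is purely a definition; the paper itself supplies no proof, only the worked example with $f=\tfrac{2}{3}$ and the subsequent discussion of reduced growth and volatility. Your well-posedness remarks (that $F_K\in(0,\mathcal{F}_K)\subset(0,\mathcal{F}_*)$ so the fractional Kelly bet stays in the submartingale regime with $\bm{\mathsf{U}}(F_K,p)>0$) are correct and consistent with what the paper asserts informally in Section 6, so nothing further is needed.
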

For example, choosing $f=\tfrac{2}{3}$ then $F_{K}=\frac{2}{3}\mathcal{F}_{K}$. Suppose $p=0.515$ and $q=1-p=0.475$ then $\mathcal{F}_{K}=0.03$. If one chooses
$f=\tfrac{2}{3}$ then $F_{K}=f\mathcal{F}_{K}=\tfrac{2}{3}\tfrac{3}{100}=\tfrac{2}{100}=\tfrac{1}{50}$. Hence if the wealth/liquity at the $(N-1)^{th}$ bet or wager is
$\mathscr{W}(N-1)$ the at the $N^{th}$ bet one would wager an amount
\begin{align}
B(N)=F_{K}\mathscr{W}(N-1)=f\mathcal{F}_{K}\mathscr{W}(N-1)=\tfrac{1}{50}\mathscr{W}(N)
\end{align}
The value of $f$ of course is arbitrary and one could choose $f=\tfrac{1}{2}$ or $f=\tfrac{3}{4}$ and so on, but of course, the smaller the fraction $f$, the slower the rate of growth of $\mathscr{W}(N)$ with $N$.

Figure 5 and 6 plot the expectation $\bm{\mathsf{E}}[\mathscr{W}(N)] vs N$, and the volatility $\sigma(\mathscr{W}(N)) vs N$ for $p=0.52$ and $q=1-p=0.48$ for the Kelly fraction $\mathcal{F}_{K}=p-q=2p-1=0.04=\tfrac{1}{25}$, and also for the fractional Kelly fraction $F_{K}=f\mathcal{F}_{K}=\tfrac{2}{3}\mathcal{F}_{K}=\tfrac{2}{3}\tfrac{1}{25}=\tfrac{2}{75}$. Here $\mathscr{W}(0)=1000$.
\begin{figure}
\begin{center}
\includegraphics[height=3.0in,width=6.0in]{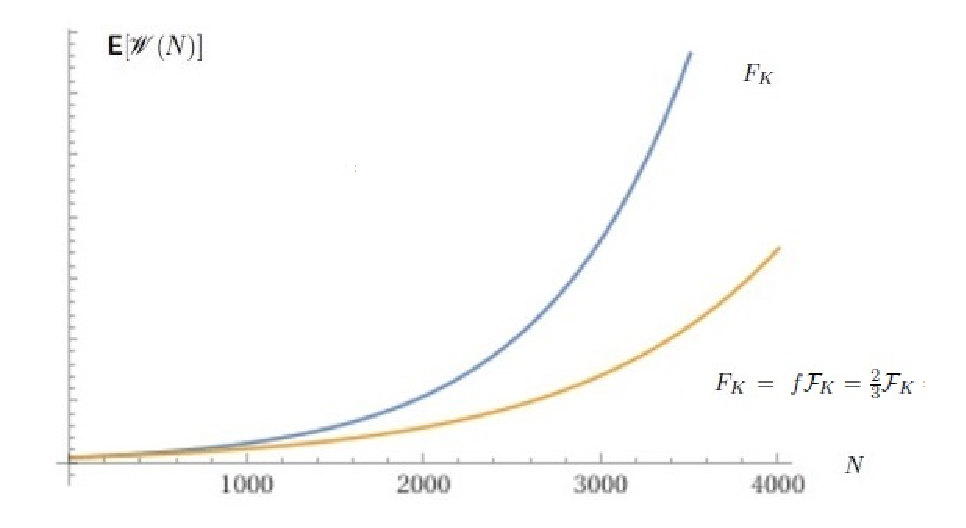}
\caption{Plots of $\bm{\mathsf{E}}[\mathscr{W}(N)]$ vs N  for $p=0.52$ and $\mathcal{F}_{K}=0.04$ and $F_{K}=f\mathcal{F}_{K}=\tfrac{2}{3}\mathcal{F}_{K}=\tfrac{2}{75}$}
\end{center}
\end{figure}
\begin{figure}
\begin{center}
\includegraphics[height=2.45in,width=6.0in]{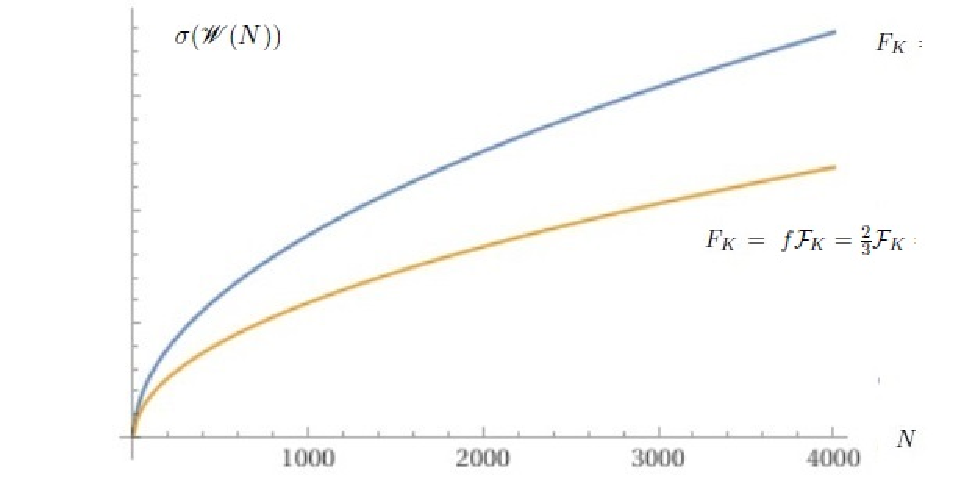}
\caption{Plots of volatility $\bm{\sigma}(N)$ vs N  for $p=0.52$ and $\mathcal{F}_{K}=0.04$ and $F_{K}=f\mathcal{F}_{K}=\tfrac{2}{3}\mathcal{F}_{K}=\tfrac{2}{75}$}
\end{center}
\end{figure}
In Figure 5, one can see that the expected value $\bm{\mathsf{E}}[\mathscr{W}(N)]$ for any N, decreases for the fraction $F_{K}=\tfrac{2}{3}\mathcal{F}_{K}$ for the same probability $p=0.52$ so as expected the growth rate is now less than the optimum growth rate that occurs for $\mathcal{F}_{K}$. The trade off, as shown in Figure 6, is that the volatility is now decreased for $f=\tfrac{2}{3}$.
\clearpage
\appendix
\renewcommand{\theequation}{\Alph{section}.\arabic{equation}}
\section{The moment generating function for binomial distributions}
In this section, expressions for variance and volatility of the liquidity $\mathscr{W}(N)$ are derived for arbitrary F and for the Kelly fraction $\mathcal{F}_{K}$.
Also, the expectations $\bm{\mathsf{E}}[\mathscr{W}(N)]$ and $\bm{\mathsf{E}}[|\mathscr{W}(N)|^{2}]$. The following preliminary definition and lemma are first required.
\begin{defn}
The moment generating function (MGF) of a random variable $X$ is formally defined as
\begin{align}
\bm{\mathsf{M}}_{N}(X,\xi)=\bm{\mathsf{E}}[\exp(\xi X)]
\end{align}
with $\xi>0$. For the binomial random variables $(U,V)$ with $U\sim Binom(N,P)$ and $V\sim Binom (N,Q)$, the MGFs are
\begin{align}
&\bm{\mathsf{M}}_{N}(U,\xi)=\bm{\mathsf{E}}[\exp(\xi U)]=\sum_{\alpha=0}^{N}\exp(\xi \alpha)\bm{\mathsf{P}}(U=\alpha;N)=\sum_{\alpha=0}^{N}\exp(\xi \alpha)\binom{N}{\alpha}\mathit{p}^{\alpha}\mathit{q}^{N-\alpha}\\&
\bm{\mathsf{M}}_{N}(V,\xi)=\bm{\mathsf{E}}[\exp(\xi V)]=\sum_{\beta=0}^{N}\exp(\xi \beta)\bm{\mathsf{P}}(V=\beta;N)=\sum_{\beta=0}^{N}\exp(\xi \beta)\binom{N}{\beta}\mathit{q}^{\beta}\mathit{q}^{N-\beta}
\end{align}
\end{defn}
\begin{lem}
For the binomial random variables $(U,V)$ with $U\sim Binom(N,P)$ and $V\sim Binom (N,Q)$, the MGFs are
\begin{align}
&\bm{\mathsf{M}}_{N}(U,\xi)=\bm{\mathsf{E}}[\exp(\xi U)]=(1-p+p\exp(\xi))^{N}\\&
\bm{\mathsf{M}}_{N}(V,\xi)=\bm{\mathsf{E}}[\exp(\xi U)]=(1-q+q\exp(\xi))^{N}
\end{align}
\end{lem}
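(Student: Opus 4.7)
The proof is a one-step application of the binomial theorem, so my plan is simply to organise the calculation cleanly rather than to overcome any technical obstacle. First, I would substitute directly into the definition (A.2) of the previous definition, writing
\begin{align*}
\bm{\mathsf{M}}_{N}(U,\xi) = \sum_{\alpha=0}^{N} \exp(\xi \alpha)\binom{N}{\alpha}\mathit{p}^{\alpha}\mathit{q}^{N-\alpha} = \sum_{\alpha=0}^{N} \binom{N}{\alpha}(\mathit{p}\, e^{\xi})^{\alpha}\mathit{q}^{N-\alpha},
\end{align*}
where the factor $e^{\xi\alpha}$ has been combined with $\mathit{p}^{\alpha}$ to form $(\mathit{p}e^{\xi})^{\alpha}$.

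Second, I would invoke the finite binomial theorem $(x+y)^{N} = \sum_{\alpha=0}^{N}\binom{N}{\alpha}x^{\alpha}y^{N-\alpha}$ with $x = \mathit{p}e^{\xi}$ and $y = \mathit{q}$, which collapses the sum to $(\mathit{p}e^{\xi} + \mathit{q})^{N}$. Using $\mathit{q} = 1-\mathit{p}$ yields the claimed form $(1-\mathit{p}+\mathit{p}e^{\xi})^{N}$. The computation for $V \sim \mathrm{Binom}(N,\mathit{q})$ is then formally identical with the roles of $\mathit{p}$ and $\mathit{q}$ interchanged, producing $(1-\mathit{q}+\mathit{q}e^{\xi})^{N}$.

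As a conceptual cross-check, I would note the probabilistic interpretation: $U$ is a sum of $N$ i.i.d.\ Bernoulli$(\mathit{p})$ indicators, each with MGF $\mathit{p}e^{\xi} + (1-\mathit{p})$; by the independence of the trials (the same independence invoked in Lemma 2.2 to establish $\bm{\mathsf{COV}}(U,V)=0$), the MGF of the sum is the product of the per-trial MGFs, recovering $(\mathit{p}e^{\xi} + 1-\mathit{p})^{N}$.

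The hard part: there really isn't one. This is a classical textbook identity; the only points requiring care are correctly pairing $e^{\xi}$ with the success probability when reindexing, keeping consistent which of $\mathit{p}$ or $\mathit{q}$ plays the role of the success probability for each of $U$ and $V$, and verifying that the rearranged sum is still a genuine $(x+y)^{N}$ expansion (i.e.\ the exponent of $\mathit{q}^{N-\alpha}$ has not been disturbed by absorbing $e^{\xi\alpha}$ into the other factor).
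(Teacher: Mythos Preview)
Your proposal is correct and takes essentially the same approach as the paper: both substitute into the defining sum and collapse it with the binomial theorem. The only cosmetic difference is that the paper first factors out $(1-\mathit{p})^{N}$ and applies the one-variable form $\sum_{\alpha}\binom{N}{\alpha}\mathscr{Z}^{\alpha}=(1+\mathscr{Z})^{N}$ with $\mathscr{Z}=\mathit{p}e^{\xi}/(1-\mathit{p})$, whereas you apply the two-variable form $(x+y)^{N}$ directly with $x=\mathit{p}e^{\xi}$, $y=\mathit{q}$; your organisation is slightly cleaner but the argument is the same.
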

\begin{proof}
\begin{align}
\bm{\mathsf{M}}_{N}(U,\xi)&=\bm{\mathsf{E}}[\exp(\xi U)]=\sum_{\alpha=0}^{N}\exp(\xi U)\bm{\mathsf{P}}(U=\alpha;N)\nonumber\\&=\sum_{\alpha=0}^{N}\exp(\xi \alpha)\binom{N}{\alpha}\mathit{p}^{\alpha}\mathit{q}^{N-\alpha}\equiv \sum_{\alpha=0}^{N}\exp(\xi \alpha\binom{N}{\alpha}\mathit{p}^{\alpha}(1-\mathit{p})^{N-\alpha}\nonumber\\&
=(1-\mathit{p})^{N}\sum_{\alpha=0}^{N}\binom{N}{\alpha}\frac{\mathit{p}^{\alpha}\exp(\xi \alpha)}{(1-\mathit{p})^{\alpha}}\nonumber\\&
=(1-\mathit{p})^{N}\sum_{\alpha=0}^{N}\binom{N}{\alpha}\left|\frac{p\exp(\xi)}{(1-\mathit{p})}\right|^{\alpha}\nonumber\\&
=(1-\mathit{p})^{N}\sum_{\alpha=0}^{N}\binom{N}{\alpha}\mathscr{Z}^{\alpha}
\end{align}
where $\mathscr{Z}=p\exp(\xi)/(1-\mathit{p})$. Now applying the Binomial Theorem
\begin{align}
\sum_{\alpha=0}^{N}\binom{N}{\alpha}\mathscr{Z}^{\alpha}=(1+\mathscr{Z})^{N}
\end{align}
so that now
\begin{align}
&\bm{\mathsf{M}}_{N}(U,\xi)=(1-\mathit{p})^{N}\sum_{\alpha=0}^{N}\exp(\xi U)\binom{N}{\alpha}\left|\frac{p\exp(\xi)}{(1-\mathit{p})}\right|^{\alpha}=(1-\mathit{p})^{N}\left|1+ \frac{p\exp(\xi)}{1-p}\right|^{N}\nonumber\\&
=(1-\mathit{p})^{N}\left|\frac{1-p+p\exp(\xi)}{1-p}\right|^{N}=\frac{(1-\mathit{p})^{N}}{(1-\mathit{p})^{N}}(1-p+p\exp(\xi))^{N}\nonumber\\&
=(1-p+p\exp(\xi))^{N}
\end{align}
and similarly for $\bm{\mathsf{M}}_{N}(V,\xi)$.
\end{proof}
\clearpage

\end{document}